\let\oldproofname=\proofname
\renewcommand{\proofname}{\rm\bf{\oldproofname}}
\renewenvironment{proof}[1][\proofname]{%
  \par\pushQED{\qed}\normalfont%
  \topsep6\p@\@plus6\p@\relax
  \trivlist\item[\hskip\labelsep\bfseries#1\@addpunct{.}]%
  \ignorespaces
}{%
  \popQED\endtrivlist\@endpefalse
}
  \tikzset{->-/.style={decoration={
  markings,
  mark=at position .5 with {\arrow{>}}},postaction={decorate}}}
    \tikzset{-<-/.style={decoration={
  markings,
  mark=at position .5 with {\arrow{<}}},postaction={decorate}}}
    \tikzset{-->-/.style={decoration={
  markings,
  mark=at position .8 with {\arrow{>}}},postaction={decorate}}}
\newtheorem{lemma}{Lemma}[section]
\newtheorem{proposition}[lemma]{Proposition}
\newtheorem{theorem}[lemma]{Theorem}
\theoremstyle{definition}
\newtheorem{remark}[lemma]{Remark}
\newtheorem{definition}[lemma]{Definition}
\newtheorem{question}[lemma]{Question}
\newtheorem{problem}[lemma]{Problem}
\newtheorem{conjecture}[lemma]{Conjecture}
\newtheorem{example}[lemma]{Example}
\newcommand{\Z}{\mathbb{Z}}
\newcommand{\C}{\mathbb{C}}
\newcommand{\R}{\mathbb{R}}
\renewcommand{\P}{\mathbb{P}}
\newcommand{\pp}{\mathfrak{P}}
\begin{document}

\tikzset{->-/.style={decoration={
  markings,
  mark=at position .7 with {\arrow{>}}},postaction={decorate}}}
  
  \tikzset{->>-/.style={decoration={
  markings,
  mark=at position .5 with {\arrow{>}}},postaction={decorate}}}
  
  \tikzset{-<-/.style={decoration={
  markings,
  mark=at position .3 with {\arrow{<}}},postaction={decorate}}}
  
\usetikzlibrary{angles, quotes}

\title{The dimer model and dynamical incidence geometry}
\author{Anton Izosimov\thanks{
School of Mathematics and Statistics,
University of Glasgow;
e-mail: {\tt anton.izosimov@glasgow.ac.uk}
}\, and Pavlo Pylyavskyy\thanks{
Department of Mathematics,
University of Minnesota;
e-mail: {\tt ppylyavs@umn.edu }
} }
\date{}

\maketitle

\abstract{
We propose a geometric counterpart of the dimer model on bipartite graphs. A state of our model consists of a choice of a point for each white vertex and hyperplane for each black vertex.
This data is subject to certain conditions determined by the graph; the resulting configurations are called coherent double circuit configurations. We show that  our model behaves consistently under standard local moves of the dimer model. 
On the geometric side, this gives rise to a new class of theorems  in linear incidence geometry -- dynamical incidence theorems. 
Examples include results on pentagram maps, pentagram spirals, and Q-nets. 
We also examine the problem of parametrizing coherent double circuit configurations. In particular, we study whether, once the white-vertex part of the data is fixed, one can recover the black-vertex data from a point on the spectral curve.
}

\tableofcontents

\section{Introduction}

Incidence theorems are statements about points, lines, and possibly higher-dimensional subspaces and their incidences. Examples include classical theorems of Desargues, Pappus, and M\"obius. In this paper, we establish a connection between incidence geometry and an archetypal model of statistical physics -- the dimer model. 

The dimer model is the study of perfect matchings on bipartite graphs.  In addition to being one of the most fundamental exactly  solvable models of statistical mechanics, the dimer model arises in multiple mathematical contexts, in particular in gauge theory and integrable systems. In the latter direction, Goncharov and Kenyon showed that certain sequences of local transformations, or \emph{moves}, of the dimer model on a torus can be viewed as discrete completely integrable systems \cite{GK}. These integrable systems can also be described as sequences of mutations in an appropriate cluster algebra and are, therefore, known as \emph{cluster integrable systems}.

The construction we propose in the present paper can be viewed as a geometrization of the dimer model and Goncharov-Kenyon dynamics.  Fix a bipartite graph $\Gamma$ on an orientable surface, typically a torus, and an integer $d \geq 2$.  A state of our model consists of a choice of a point in the projective space $\P^d$ for each white vertex of $\Gamma$ and a hyperplane in $\P^d$ for each black vertex of $\Gamma$. These points and hyperplanes are subject to two conditions:

\begin{enumerate}
\item[(V)] The points/hyperplanes assigned to the neighbors of any vertex form a circuit (i.e., a minimally linear dependent set). 
\item[(F)] The multi-ratio of the collection of points/hyperplanes assigned to the boundary of every face is equal to one. 
\end{enumerate}

An assignment of points to white vertices such that the neighbors of each black vertex form a circuit is called in \cite{affolter2024vector} a \emph{circuit configuration}. 
Thus, our condition (V) can be viewed as an assignment of a circuit configuration to white vertices and a circuit configuration in the dual space to black vertices. Condition (F) is a compatibility condition between the two circuit configurations.  Configurations satisfying this condition were shown in \cite{fomin2023incidences} to encode numerous classical incidence theorems. 
Our construction can thus be viewed as a unification of models of  \cite{affolter2024vector} and \cite{fomin2023incidences}.

Our main result is that the conditions (V) and (F) are preserved by local moves of the dimer model on the graph $\Gamma$. For each particular graph, this can be interpreted as a \emph{dynamical incidence theorem}. Every such theorem concerns an intricate incidence relation between a collection of points and hyperplanes, and states that this relation is preserved by a certain geometric construction. 

To give the reader a feel for the types of theorems obtained on this path, we consider an example related to Schwartz's \emph{pentagram map} \cite{Sch}. The definition of the pentagram map is shown in Figure \ref{fig:PM}: the image of
a polygon $P$ under the pentagram map is the polygon $T(P)$ whose vertices are the intersection points of consecutive shortest diagonals of $P$, i.e. diagonals connecting second-nearest
vertices. 


For two planar polygons $P$, $Q$ with the same number of vertices, say that $Q$ is inscribed in $P$ if consecutive vertices of $Q$ lie on consecutive sides of $P$. Note that the property of being inscribed is not preserved by the pentagram map $T$: if $Q$ is inscribed in $P$, then it is not necessarily the case that $T(Q)$ is inscribed in $T(P)$. However, it turns out that if this does hold, then this also holds for  the next, and hence all,  iterations of the pentagram map:


\begin{figure}[t]
\centering
\includegraphics[scale = 0.25]{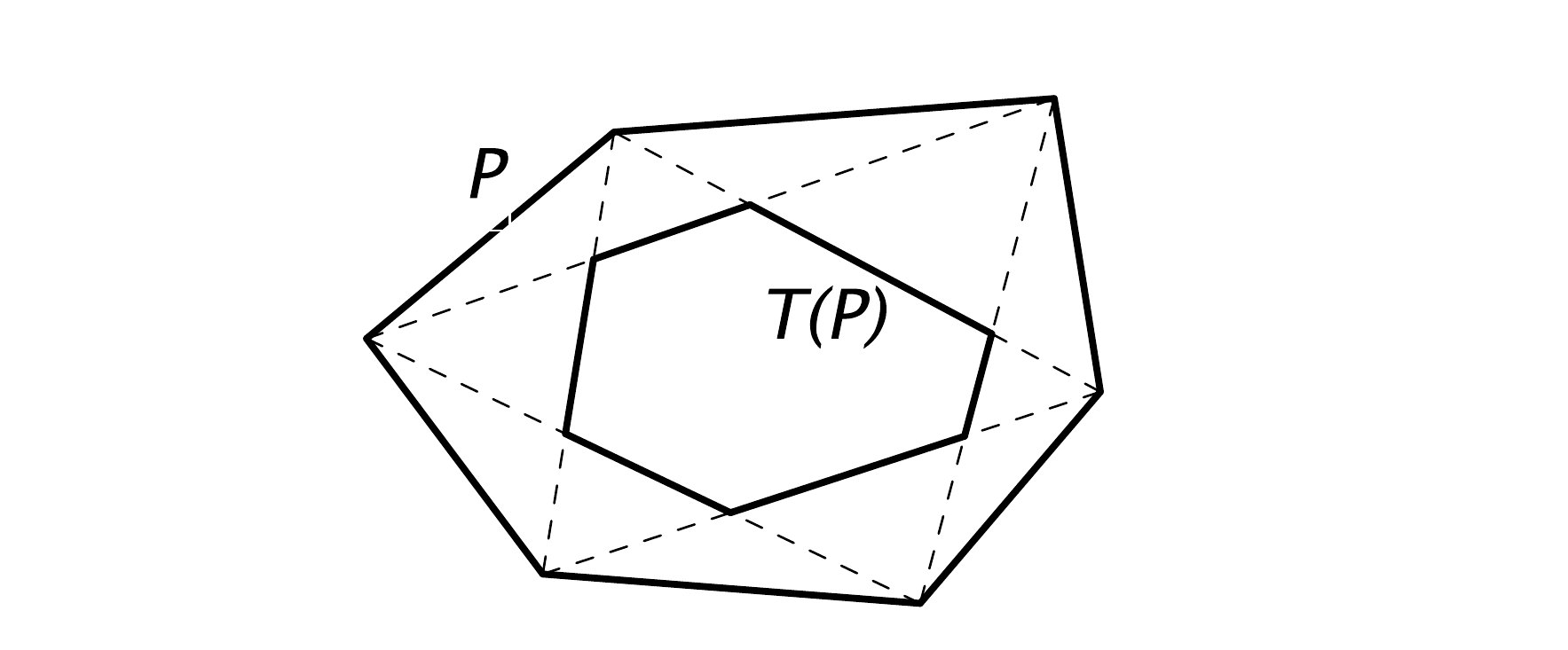}

%
%
%
%
%
%
\caption{The pentagram map.}\label{fig:PM}
\end{figure}
\begin{figure}[b!]
\centering
  \centering
\includegraphics[scale = 0.35]{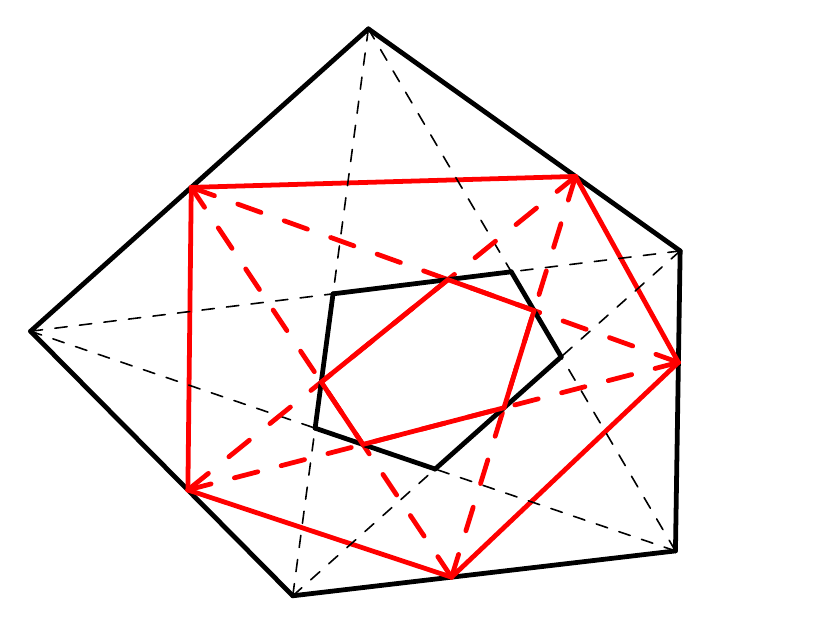}

\caption{Polygons inscribed in each other whose images under the pentagram map are also inscribed in each other.
}\label{fig:PM2}
\end{figure}
\begin{theorem}\label{thm1}
Let $T$ be the pentagram map, and $P$, $Q$ be planar polygons such that $Q$ is inscribed in $P$, and $T(Q)$ in $T(P)$, see Figure \ref{fig:PM2}. Then $T^2(Q)$ is also inscribed in $T^2(P)$, see Figure \ref{fig:PM3}.
\end{theorem}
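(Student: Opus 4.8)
The plan is to recognize the entire inscribed-pentagram picture as a single state of our model — a coherent double circuit configuration — on the bipartite torus graph that encodes the pentagram map, and then to let the preservation of conditions (V) and (F) under local moves do the work. I would specialize to $d = 2$, so that white vertices carry points and black vertices carry lines in $\P^2$. The graph $\Gamma$ is chosen so that, within one fundamental window of the pentagram dynamics, its white vertices carry the vertices of $P$, of $Q$, and of their pentagram images $T(P)$, $T(Q)$, while its black vertices carry the corresponding lines: the sides of $P$ and $T(P)$ together with the short diagonals that the pentagram construction uses. Following the Goncharov--Kenyon description of the pentagram map as cluster integrable dynamics, one step of $T$ is realized as an explicit sequence of local moves (spider moves and contraction/expansion moves) that returns $\Gamma$ to an isomorphic copy shifted by one step along the torus.

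The heart of the argument is the translation between geometry and the two conditions. Condition (V) should reproduce the elementary incidences of the construction: a black vertex whose white neighbors are collinear records that a line (a side or a short diagonal) passes through the given vertices, and a white vertex whose black neighbors are concurrent records that a new vertex is cut out as the intersection of two diagonals. In this language the statement that $Q$ is inscribed in $P$ becomes a family of circuit (collinearity) conditions: the vertex $q_i$ lies on the side of $P$ through $p_i$ and $p_{i+1}$, so these three points form a circuit attached to the black vertex carrying that side. The remaining content lives in condition (F): I would compute the multi-ratio around the faces of $\Gamma$ and show that, for the configuration assembled from two inscribed pairs, these multi-ratios all equal $1$. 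Pinning down $\Gamma$ and the point/line assignment so that (V) is exactly the pentagram incidences together with the inscribing relations, and so that (F) holds, is the crux and the step I expect to be the main obstacle.

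With this dictionary established, the hypotheses of the theorem supply precisely a coherent double circuit configuration on $\Gamma$. A single window of the dynamics spans two consecutive time-steps, so its coherence requires the inscribing relation at both of them — this is exactly why the theorem assumes both that $Q$ is inscribed in $P$ and that $T(Q)$ is inscribed in $T(P)$, rather than the first alone: one inscribing does not close up all the multi-ratio conditions on the torus faces. Granting coherence, I would invoke the main result that the move sequence realizing $T$ sends this coherent configuration to another coherent configuration on the shifted graph. Reading the new configuration back through the same dictionary, its window now spans the steps $T(P), T(Q)$ and $T^2(P), T^2(Q)$; coherence of its faces says that $T(Q)$ is inscribed in $T(P)$, consistent with the hypothesis, and that $T^2(Q)$ is inscribed in $T^2(P)$, which is the desired conclusion. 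Iterating the same move sequence then yields the inscribing of $T^{n}(Q)$ in $T^{n}(P)$ for all $n$.
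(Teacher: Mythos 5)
Your overall strategy is the one the paper follows (and indeed the one its introduction announces): realize the hypothesis as a coherent double circuit configuration on a toric bipartite graph, realize $T$ as a sequence of local moves returning the graph to itself, and invoke the preservation of (V) and (F). But the step you yourself identify as the crux --- pinning down $\Gamma$, the point/line assignment, and the translation of (V) and (F) into the inscribing relations --- is exactly where the mathematical content of the proof lives, and your sketch of that dictionary does not match a configuration that works. You propose to place the vertices of $P$, $Q$, $T(P)$, $T(Q)$ at white vertices and the sides of $P$, $T(P)$ and the short diagonals at black vertices, with the inscribing relations encoded as circuit (collinearity) conditions under (V) and with (F) as an additional family of multi-ratio identities still to be verified. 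The paper's configuration is much leaner: only the vertices $P_i$ of $P$ sit at white vertices and only the \emph{sides} $q_i$ of $Q$ (as lines) sit at black vertices, indexed so that indices grow by $2$ in one lattice direction and by $1$ in the other before quotienting to the torus. With that choice every vertex has degree $4$, so in $\P^2$ condition (V) is \emph{automatic} (any four generic points or lines form a circuit) and carries no geometric content; all of the geometry sits in condition (F), which by Proposition \ref{prop:coh} applied to the two types of quadrilateral faces is \emph{exactly} the two hypotheses ``$Q$ is inscribed in $P$'' and ``$T(Q)$ is inscribed in $T(P)$'' --- there is nothing further to verify. Your division of labor is essentially inverted, and with your richer labeling it is not clear that (F) would hold or that the move sequence would act correctly.

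The second missing ingredient is the verification (Proposition \ref{prop:pent} in the paper) that the concrete move sequence --- urban renewal at half of the quadrilateral faces followed by degree-two vertex removals --- reproduces the graph with $P_i$ replaced by $P_i' = P_iP_{i+2}\cap P_{i+1}P_{i+3}$ and $q_i$ replaced by the corresponding side of $T(Q)$. This is a direct check against the urban renewal rule for double circuit configurations (Figure \ref{urban3}), but it has to be done for the specific labeled graph; ``following the Goncharov--Kenyon description'' is not a substitute, since one must confirm that the new white and black labels produced by the move formulas are the pentagram images of the old ones. Once the correct graph, the identification of (F) with the two inscribing conditions, and this check are in place, your concluding paragraph (apply Theorem \ref{mainThm}, read the shifted configuration back through the dictionary, iterate) is exactly the paper's argument.
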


\begin{figure}[t]
\centering
  \centering
\includegraphics[scale = 0.6]{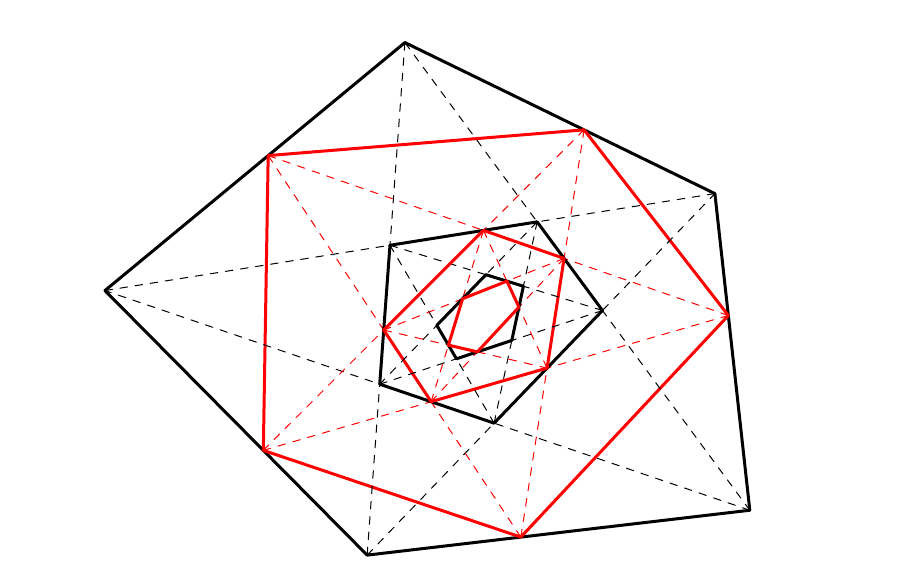}

\caption{If $Q$ is inscribed in $P$, and $T(Q)$ in $T(P)$,  then $T^2(Q)$ is also inscribed in $T^2(P)$.
}\label{fig:PM3}
\end{figure}

Note that this theorem can be applied again to $T(P)$ and $T(Q)$, giving that $T^3(Q)$ is inscribed in $T^3(P)$. Analogously, one gets that $T_k^m(Q)$ is  inscribed in $T_k^m(P)$ for all $m \in \Z$. We prove this result in Section \ref{sec:ex1}. To that end, we show that, for an appropriate bipartite graph on a torus, point-line configurations satisfying the above conditions (V) and (F) are in bijection with pairs of polygons $P$, $Q$ such that $Q$ is inscribed in $P$ and $T(Q)$ in $T(P)$. Furthermore, we provide a sequence of local moves which preserves the combinatorics of the graph but replaces polygons $P$ and $Q$ with, respectively, $T(P)$ and $T(Q)$. So, since the combinatorics of the graph remains the same, and the conditions (V) and (F) survive under local moves, it follows that  $T(P)$ and $T(Q)$ must satisfy the same conditions as $P$ and $Q$. In particular, $T(T(Q))$ must also be inscribed in $T(T(P))$, q.e.d.

More generally, each time we have a bipartite graph and a sequence of moves which preserves its combinatorics, we get an incidence theorem such as Theorem \ref{thm1}. The input of every such theorem is a certain non-trivial incidence, such as the one in Figure \ref{fig:PM2}. The theorem says that applying a certain geometric construction produces an incidence with the same combinatorics. 

We note that Theorem \ref{thm1}, just like all other theorems stemming from our model, can be proved in an elementary way, by a number of successive applications of Desargues' theorem. A non-trivial problem is \emph{finding} theorems of this type. Our model produces such a theorem for any bipartite graph.

Another problem we discuss in the paper is the existence and parametrization of incidences which serve as inputs of our theorems. For example, how does one construct polygons $P$ and $Q$ such that  $Q$ is inscribed in $P$ and $T(Q)$ in $T(P)$ (where $T$ is the pentagram map)?  In Section~\ref{sec:rec} we prove that, given $P$, there exists a one-dimensional family of polygons $Q$ satisfying these conditions. This family is parametrized by an algebraic curve, known as the \emph{spectral curve}. For example, for generic pentagons the spectral curve is elliptic. This means that  if we fix the outer  pentagon in Figure \ref{fig:PM2}, then the set of configurations as shown forms an elliptic curve. More generally, for an arbitrary bipartite graph on a torus, we discuss the problem of recovering hyperplanes attached to black vertices from points attached to white vertices. We show that this problem is closely related to the spectral curve of the corresponding dimer model.

\begin{remark}
We note that while {parametrization} of pairs of polygons $P$ and $Q$ such that  $Q$ is inscribed in $P$ and $T(Q)$ in $T(P)$ turns out to be a rather difficult problem, an {example} of such a pair can be constructed as follows. Suppose $P$ is a generic pentagon. Then there is a unique conic tangent to all sides of $P$. Denote the polygon formed by the tangency points by $I(P)$. Then, by a theorem of Kasner  \cite{kasner1928projective}, we have $T(I(P)) = I(T(P))$. So, setting $Q = I(P)$, we get, for every pentagon $P$, a pair of polygons $P$ and $Q$ such that  $Q$ is inscribed in $P$ and $T(Q)$ in $T(P)$. 

More generally, this construction works for \emph{Poncelet polygons}, i.e., polygons inscribed in a conic and circumscribed about another conic. Indeed, for such polygons, one still has $T(I(P)) = I(T(P))$ \cite{tabachnikov2019kasner}.

Even more generally, suppose $P$ is a polygon {circumscribed} about a conic (but not necessarily inscribed). As before, let $Q = I(P)$ be  the polygon inscribed in $P$ formed by the points where the inscribed conic touches the sides of $P$. 
In that case, it may not be true that $T(Q) = I(T(P))$. Indeed, $I(T(P))$ may not even be defined, because the property of being circumscribed is not preserved by the pentagram map. However, it is still true that $T(Q)$ is inscribed in $T(P)$, see Proposition \ref{thm:ins} below.
\end{remark}

\bigskip

{\bf Acknowledgments.} A.I. is grateful to Max Planck Institute for Mathematics in
Bonn for its hospitality and financial support. A.I. was partially supported by the Simons Foundation through its Travel Support for Mathematicians program. 
P.P. was partially supported by the NSF DMS-1949896 and by Simons Foundation through its Simons Fellows in
Mathematics program and Travel Support for Mathematicians program.

\section{The dimer model and Gocharov-Kenyon dynamics}\label{sec:gk}
In this section, we review aspects of the dimer model relevant for the present paper. For details, see \cite{GK}.

Given a graph, its \textit{dimer cover}, or a \textit{perfect matching}, is a set of edges with the property that every vertex is adjacent to a unique edge of the cover. 
The study of perfect matchings and their statistics on a given graph is known as the \emph{dimer model}. In this paper, we are only interested in the dimer model on \emph{bipartite graphs}, i.e. graphs whose vertices are colored black and white in such a way that no edge joins two vertices of the same color. Furthermore, we will always assume that our graph $\Gamma$ is embedded in a $2$-dimensional surface $\Sigma$ in such a way that its \textit{faces}, i.e.  connected components of the complement $\Sigma \setminus \Gamma$, are contractible.

The probability of each particular dimer cover in the dimer model is defined using numbers, or \emph{weights}, written on edges or faces of the graph. In the probabilistic context, the weights are assumed to be positive and real, but in more general situations it is common to allow arbitrary complex weights. We will not be explicitly using weights in the present paper, although they will always be implicitly present and may be calculated based on our geometric data. 

A \emph{move} is a local transformation of a bipartite graph which is, in a certain sense, an automorphism of the dimer model. Each such move affects, albeit only locally, both the combinatorics of the graph and the weights. Since transformations of weights are irrelevant for our purposes, we only describe the change in combinatorics.

The first move type is \emph{degree two vertex removal} shown in Figure \ref{shrink}. The figure depicts removal of a degree two white vertex, but the same move is allowed with all colors
reversed. The neighbors of a vertex being removed can have any degree. The inverse transformation is also allowed and is called \emph{degree two vertex addition}.

 \begin{figure}[t]
 \centering
\begin{tikzpicture}[, scale = 0.8]
\node [draw,circle,color=black, fill=black,inner sep=0pt,minimum size=5pt] (A) at (0,0) {};
\node [draw,circle,color=black, fill=white,inner sep=0pt,minimum size=5pt] (B) at (1,0) {};
\node [draw,circle,color=black, fill=black,inner sep=0pt,minimum size=5pt] (C) at (2,0) {};
\node [draw,circle,color=black, fill=black,inner sep=0pt,minimum size=5pt] (D) at (7,0) {};
%
%
%
\draw (A) -- (B) -- (C);
\draw (A) -- +(-0.5,+0.5);
\draw (A) -- +(-0.7,+0.);
\draw (A) -- +(-0.5,-0.5);
\draw (C) -- +(+0.5,+0.5);
\draw (C) -- +(+0.7,+0.);
\draw (C) -- +(+0.5,-0.5);
\node () at (4.5,0) {$\longleftrightarrow$};
\draw (D) -- +(-0.5,+0.5);
\draw (D) -- +(-0.5,-0.5);
\draw (D) -- +(+0.5,+0.5);
\draw (D) -- +(+0.5,-0.5);
\draw (D) -- +(+0.7,+0.);
\draw (D) -- +(-0.7,+0.);
\end{tikzpicture}
\caption{A degree two vertex removal/addition.}\label{shrink}
\end{figure}
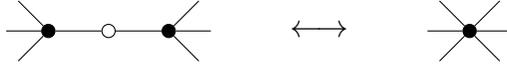

%
%

 \begin{figure}[b]
 \centering
\begin{tikzpicture}[scale = 0.5]
\node [draw,circle,color=black, fill=white,inner sep=0pt,minimum size=5pt] (A) at (-9.5,0) {};
\draw (A) -- +(-1,0);
\draw (A) -- +(0,-1);
\draw (A) -- +(-.7,-.7);
\node [draw,circle,color=black, fill=black,inner sep=0pt,minimum size=5pt] (B) at (-6.5,0) {};
\draw (B) -- +(1,0);
\draw (B) -- +(0,-1);
\draw (B) -- +(.7,-.7);
\node [draw,circle,color=black, fill=white,inner sep=0pt,minimum size=5pt] (C) at (-6.5,3) {};
\draw (C) -- +(1,0);
\draw (C) -- +(0,1);
\draw (C) -- +(.7,.7);
\node [draw,circle,color=black, fill=black,inner sep=0pt,minimum size=5pt] (D) at (-9.5,3) {};
\draw (D) -- +(-1,0);
\draw (D) -- +(0,1);
\draw (D) -- +(-.7,.7);
\draw [thin] (A) -- (B) node[midway, below] {$$}; \draw (B) -- (C) node[midway, right] {$$}; \draw (C) -- (D) node[midway, above] {$$}; \draw (D) -- (A) node[midway, left] {$$};;
\node [draw,circle,color=black, fill=white,inner sep=0pt,minimum size=5pt] (A) at (0,0) {};
\node [draw,circle,color=black, fill=black,inner sep=0pt,minimum size=5pt] (B) at (3,0) {};
\node [draw,circle,color=black, fill=white,inner sep=0pt,minimum size=5pt] (C) at (3,3) {};
\node [draw,circle,color=black, fill=black,inner sep=0pt,minimum size=5pt] (D) at (0,3) {};
\node [draw,circle,color=black, fill=black,inner sep=0pt,minimum size=5pt] (A1) at (0.7,0.7) {};
\node [draw,circle,color=black, fill=white,inner sep=0pt,minimum size=5pt] (B1) at (2.3,0.7) {};
\node [draw,circle,color=black, fill=black,inner sep=0pt,minimum size=5pt] (C1) at (2.3,2.3) {};
\node [draw,circle,color=black, fill=white,inner sep=0pt,minimum size=5pt] (D1) at (0.7,2.3) {};
\draw [thin] (A1) -- (B1) node[midway, below] {$$}; \draw (B1) -- (C1) node[midway, right] {$$}; \draw (C1) -- (D1) node[midway, above] {$$}; \draw (D1) -- (A1) node[midway, left] {$$};;
\draw (A) -- (A1);
\draw (B) -- (B1);
\draw (C) -- (C1);
\draw (D) -- (D1);
\draw (A) -- +(-1,0);
\draw (A) -- +(0,-1);
\draw (A) -- +(-.7,-.7);
\draw (B) -- +(1,0);
\draw (B) -- +(0,-1);
\draw (B) -- +(.7,-.7);
\draw (C) -- +(1,0);
\draw (C) -- +(0,1);
\draw (C) -- +(.7,.7);
\draw (D) -- +(-1,0);
\draw (D) -- +(0,1);
\draw (D) -- +(-.7,.7);
\draw [-> ](-4,1.5) -- (-3, 1.5);
\end{tikzpicture}
\caption{An urban renewal. }\label{urban}
\end{figure}
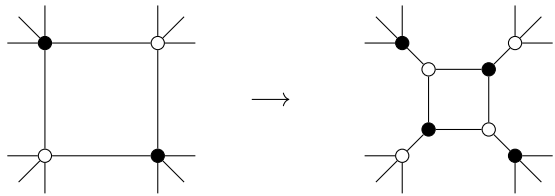

The second move type is the \emph{urban renewal}, see Figure~\ref{urban}. The vertices surrounding the square face can have any degee. There is also a simplified version of the urban renewal known as the \emph{spider move}; up to vertex removal/addition, the urban renewal and spider move are the same.  
The inverse of the urban renewal is not considered since that move is an involution, up to degree two vertex removals. Although we do not discuss transformations of weights, we note that in terms of face weights an urban renewal amounts to a transformation known as \emph{cluster mutation}. This observation underlies the connection between the dimer model and cluster algebras.\par
%
%

In \cite{GK}, Goncharov and Kenyon showed how to use the dimer model on a torus to build what they called \emph{cluster integrable systems}. Such a system arises every time we have a sequence of moves which restores the combinatorics of the initial graph. Although the combinatorics remains the same, the weights undergo a certain birational transformation. For so-called \emph{minimal} graphs, that birational map is discrete integrable in the sense that it preserves a Poisson structure and a maximal family of Poisson-commuting Hamiltonians. The latter are extracted from the so-called \emph{partition function}, which governs statistics of dimer covers of the graph.

\section{Circuit configurations}\label{sec:crc}
Here we recall the notion of a circuit configuration introduced in \cite{affolter2024vector}. 

Recall that a circuit in a projective space is a linearly dependent set of points whose any proper subset is linearly independent. Two points form a circuit when they coincide, three points form a circuit when they are collinear and pairwise distinct, etc.

\begin{definition}\label{def:cc}Let $\Gamma$  be a bipartite graph, and $d \geq 2$ be an integer. A \emph{$d$-dimensional circuit configuration} on $\Gamma$ is an assignment of a point in the projective space $\P^d$ to every white vertex of $\Gamma$ such that the points assigned to the neighbors of any black vertex form a circuit. \end{definition}

This definition also admits an affine version where points in the projective space are replaced by vectors in a vector space. Such configurations are called \emph{vector-relation configurations} \cite{affolter2024vector}. In the present paper, we are only interested in the projective setting.


 \begin{figure}[t]
 \centering
\begin{tikzpicture}[, scale = 1]
\node [draw,circle,color=black, fill=black,inner sep=0pt,minimum size=5pt] (A) at (0,0) {};
\node [draw,circle,color=black, fill=white,,inner sep=0pt,minimum size=5pt,label = left:\footnotesize${B_1}$] (ANW) at (-0.5,0.5) {};
\node [draw,circle,color=black, fill=white,,inner sep=0pt,minimum size=5pt,label = left:\footnotesize$B_k$] (ASW) at (-0.5,-0.5) {};
\node [label = {[label distance=-0cm]left:\footnotesize${\vdots}$}] () at (-0.6,0.1) {};
\node [draw,circle,color=black, fill=white,inner sep=0pt,minimum size=5pt, label = \footnotesize$A$] (B) at (1,0) {};
\node [draw,circle,color=black, fill=black,inner sep=0pt,minimum size=5pt] (C) at (2,0) {};
\node [draw,circle,color=black, fill=white,,inner sep=0pt,minimum size=5pt,label = right:\footnotesize$C_1$] (CNE) at (2.5,0.5) {};
\node [draw,circle,color=black, fill=white,,inner sep=0pt,minimum size=5pt,label = right:\footnotesize$C_l$] (CSE) at (2.5,-0.5) {};
\node [label = {[label distance=-0cm]right:\footnotesize$\vdots$}] () at (2.6,0.1) {};
\node [draw,circle,color=black, fill=black,inner sep=0pt,minimum size=5pt] (D) at (7,0) {};
\node [draw,circle,color=black, fill=white,,inner sep=0pt,minimum size=5pt,label = left:\footnotesize$B_1$] (DNW) at (6.5,0.5) {};
\node [draw,circle,color=black, fill=white,,inner sep=0pt,minimum size=5pt,label =left:\footnotesize$B_k$] (DSW) at (6.5,-0.5) {};
\node [label = {[label distance=-0cm]left:\footnotesize$\vdots$}] () at (6.4,0.1) {};
\node [draw,circle,color=black, fill=white,,inner sep=0pt,minimum size=5pt,label = right:\footnotesize$C_1$] (DSE) at (7.5,0.5) {};
\node [draw,circle,color=black, fill=white,,inner sep=0pt,minimum size=5pt,label = right:\footnotesize$C_l$] (DNE) at (7.5,-0.5) {};
\node [label = {[label distance=-0cm]right:\footnotesize$\vdots$}] () at (7.6,0.1) {};
\draw (A) -- (B) -- (C);
\draw (A) -- (ANW);
\draw (A) -- (ASW);
\draw (C) -- (CNE);
\draw (C) -- (CSE);
\node () at (4.5,0) {$\longleftrightarrow$};
\draw (D) -- (DSW);
\draw (D) -- (DNW);
\draw (D) -- (DSE);
\draw (D) -- (DNE);

\end{tikzpicture}
\caption{Removal/addition of a degree two white vertex in a circuit configuration.}\label{shrink2}
\end{figure}
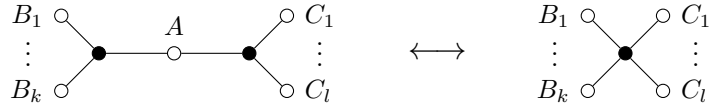

%
%
%
%
 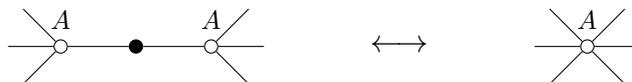
\begin{figure}[b]
 \centering
\begin{tikzpicture}[, scale = 1, ,]
\node [draw,circle,color=black, fill=white,inner sep=0pt,minimum size=5pt, label = \footnotesize$A$] (A) at (0,0) {};
\node [draw,circle,color=black, fill=black,inner sep=0pt,minimum size=5pt] (B) at (1,0) {};
\node [draw,circle,color=black, fill=white,inner sep=0pt,minimum size=5pt, label = \footnotesize$A$] (C) at (2,0) {};
\node [draw,circle,color=black, fill=white,inner sep=0pt,minimum size=5pt, label = \footnotesize$A$] (D) at (7,0) {};
%
%
%
\draw (A) -- (B) -- (C);
\draw (A) -- +(-0.5,+0.5);
\draw (A) -- +(-0.7,+0.);
\draw (A) -- +(-0.5,-0.5);
\draw (C) -- +(+0.5,+0.5);
\draw (C) -- +(+0.7,+0.);
\draw (C) -- +(+0.5,-0.5);
\node () at (4.5,0) {$\longleftrightarrow$};
\draw (D) -- +(-0.5,+0.5);
\draw (D) -- +(-0.5,-0.5);
\draw (D) -- +(+0.5,+0.5);
\draw (D) -- +(+0.5,-0.5);
\draw (D) -- +(+0.7,+0.);
\draw (D) -- +(-0.7,+0.);

\end{tikzpicture}
\caption{Removal/addition of a degree two black vertex in a circuit configuration.}\label{shrink3}
\end{figure}
%
%
%
%
%
%
 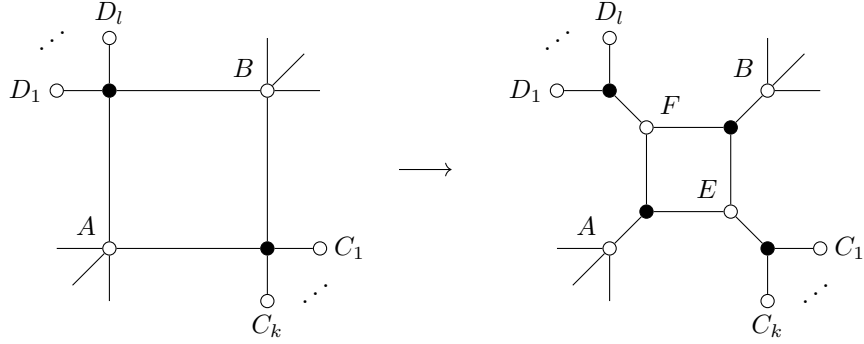
\begin{figure}[t]
 \centering
\begin{tikzpicture}[scale = 0.7, , label distance=-0.5mm]
\node [draw,circle,color=black, fill=white,inner sep=0pt,minimum size=5pt, label = 135:\footnotesize$A$] (A) at (-9.5,0) {};
\draw (A) -- +(-1,0);
\draw (A) -- +(0,-1);
\draw (A) -- +(-.7,-.7);
\node [draw,circle,color=black, fill=black,inner sep=0pt,minimum size=5pt] (B) at (-6.5,0) {};
\node [draw,circle,color=black, fill=white,inner sep=0pt,minimum size=5pt, label = right:\footnotesize$C_1$] (B1) at (-5.5,0) {};
\node [draw,circle,color=black, fill=white,inner sep=0pt,minimum size=5pt, label = below:\footnotesize$C_k$] (B2) at (-6.5,-1) {};
\node [] () at (-5.6,-0.7) {\footnotesize$\iddots$};
\draw (B) -- (B1);
\draw (B) -- (B2);
\node [draw,circle,color=black, fill=white,inner sep=0pt,minimum size=5pt, label = 135:\footnotesize$B$] (C) at (-6.5,3) {};
\draw (C) -- +(1,0);
\draw (C) -- +(0,1);
\draw (C) -- +(.7,.7);
\node [draw,circle,color=black, fill=black,inner sep=0pt,minimum size=5pt] (D) at (-9.5,3) {};
\node [draw,circle,color=black, fill=white,inner sep=0pt,minimum size=5pt, label = left:\footnotesize$D_1$] (D1) at (-10.5,3) {};
\node [draw,circle,color=black, fill=white,inner sep=0pt,minimum size=5pt, label = above:\footnotesize$D_l$] (D2) at (-9.5,4) {};
\node [] () at (-10.6,4.1) {\footnotesize$\iddots$};
\draw (D) -- (D1);
\draw (D) -- (D2);
\draw [thin] (A) -- (B) node[midway, below] {$$}; \draw (B) -- (C) node[midway, right] {$$}; \draw (C) -- (D) node[midway, above] {$$}; \draw (D) -- (A) node[midway, left] {$$};;
\node [draw,circle,color=black, fill=white,inner sep=0pt,minimum size=5pt, label = 135:\footnotesize$A$] (A) at (0,0) {};
\node [draw,circle,color=black, fill=black,inner sep=0pt,minimum size=5pt] (B) at (3,0) {};
\node [draw,circle,color=black, fill=white,inner sep=0pt,minimum size=5pt, label = 135:\footnotesize$B$] (C) at (3,3) {};
\node [draw,circle,color=black, fill=black,inner sep=0pt,minimum size=5pt] (D) at (0,3) {};
\node [draw,circle,color=black, fill=black,inner sep=0pt,minimum size=5pt] (A1) at (0.7,0.7) {};
\node [draw,circle,color=black, fill=white,inner sep=0pt,minimum size=5pt, label = 135:\footnotesize$\,E$] (B1) at (2.3,0.7) {};
\node [draw,circle,color=black, fill=black,inner sep=0pt,minimum size=5pt] (C1) at (2.3,2.3) {};
\node [draw,circle,color=black, fill=white,inner sep=0pt,minimum size=5pt, label =45:\footnotesize$F$] (D1) at (0.7,2.3) {};
\draw [thin] (A1) -- (B1) node[midway, below] {$$}; \draw (B1) -- (C1) node[midway, right] {$$}; \draw (C1) -- (D1) node[midway, above] {$$}; \draw (D1) -- (A1) node[midway, left] {$$};;
\draw (A) -- (A1);
\draw (B) -- (B1);
\draw (C) -- (C1);
\draw (D) -- (D1);
\draw (A) -- +(-1,0);
\draw (A) -- +(0,-1);
\draw (A) -- +(-.7,-.7);
\node [draw,circle,color=black, fill=white,inner sep=0pt,minimum size=5pt, label = right:\footnotesize$C_1$] (B1) at (4,0) {};
\node [draw,circle,color=black, fill=white,inner sep=0pt,minimum size=5pt, label = below:\footnotesize$C_k$] (B2) at (3,-1) {};
\node [] () at (3.9,-0.7) {\footnotesize$\iddots$};
\draw (B) -- (B1);
\draw (B) -- (B2);
\draw (C) -- +(1,0);
\draw (C) -- +(0,1);
\draw (C) -- +(.7,.7);
\node [draw,circle,color=black, fill=white,inner sep=0pt,minimum size=5pt, label = left:\footnotesize$D_1$] (D1) at (-1,3) {};
\node [draw,circle,color=black, fill=white,inner sep=0pt,minimum size=5pt, label = above:\footnotesize$D_l$] (D2) at (0,4) {};
\node [] () at (-1,4.1) {\footnotesize$\iddots$};
\draw (D) -- (D1);
\draw (D) -- (D2);
\draw [-> ](-4,1.5) -- (-3, 1.5);

\end{tikzpicture}
\caption{An urban renewal of a circuit configuration. Here $E= \langle A, B\rangle \cap \langle C_1, \dots, C_k \rangle$, $F= \langle A, B\rangle \cap \langle D_1, \dots, D_l \rangle.$}\label{urban2}
\end{figure}

Circuit configurations can be understood as a geometric realization of the dimer model. In particular, circuit configurations on graphs embedded in a surface admit the same kind of moves as the dimer model, see Figures \ref{shrink2},~\ref{shrink3},~\ref{urban2}. Next to each white vertex, we write the associated point in $\P^d$. In each case, it is readily verified that for generic input the circuit conditions on the initial graph imply the circuit conditions on the resulting one. The only move that shouldn't be allowed is white vertex removal which creates a black vertex of degree exceeding $d+2$, as such a vertex would have too many neighbors for the associated points to form a circuit. 
%



The connection between the dimer model moves and moves on circuit configurations goes beyond the same combinatorics. To every circuit configuration one can assign certain cross-ratio-type numerical invariants, one for every face of the graph (here we assume that the graph is embedded in a surface, so that the faces are well-defined). Then, as the configuration transforms as shown in the figures, its numerical invariants undergo the same transformation as the face weights in the dimer model. Thus, one can indeed think of circuit configurations and the corresponding moves as geometric counterparts of the dimer model and Goncharov-Kenyon dynamics.


\section{Coherence and the master theorem}\label{sec:coh}

Here we review a different geometric model on bipartite graphs -- coherent tilings. It was introduced in \cite{fomin2023incidences} to describe a common framework for multiple classical incidence theorems.

\begin{definition}\label{def:tiling} Let $\Sigma$ be a closed surface. A \emph{tiling} of $\Sigma$ is a bipartite graph which partitions $\Sigma$ into topological disks, together with an assignment of a point in $\P^d$ to every white vertex and a hyperplane in $\P^d$ to every black vertex. \end{definition}
Faces of a tiling are called \emph{tiles}. Each tile is a topological $2n$-gon, whose vertices are colored white and black in the alternating fashion. White vertices are labeled with points, while black vertices are labeled with hyperplanes. 
\begin{definition}\label{def:ct} A tile is called \emph{coherent} if
the multi-ratio of points/hyperplanes assigned to its vertices taken in clockwise order is equal to $1$.
  \end{definition}

Given $n$ points $A_1, \dots, A_n$ and $n$ hyperplanes $\ell_1, \dots, \ell_n$ in $\P^d$, their multi-ratio is defined by
$$
[A_1, \ell_1, \dots, A_n, \ell_n] := \frac{\boldsymbol{\ell}_1(\mathbf{A}_1) \cdots \boldsymbol{\ell}_n(\mathbf A_n)}{\boldsymbol{\ell}_1(\mathbf{A}_2) \cdots \boldsymbol{\ell}_n(\mathbf A_1)}.
$$
Here $\mathbf A_i$ and $\boldsymbol{\ell}_i$ are, respectively, vectors and covectors in a $(d+1)$-dimensional vector space representing points $A_i$ and hyperplanes $\ell_i$ in $\P^d$. 
 Note that the condition $[A_1, \ell_1, \dots, A_n, \ell_n] = 1$ in particular implies that the point $A_i$ does not lie on hyperplanes $\ell_i, \ell_{i-1}$ (where indices are understood modulo $n$). In  \cite{fomin2023incidences} this is included in the definition of coherence. 
 
 \bigskip
 
 The coherence condition has a particularly simple meaning for quadrilaterals: 
  
  \begin{proposition}[see \cite{fomin2023incidences}, Proposition 2.5] \label{prop:coh}
  A quadrilateral tile with vertex labels $A,B, c,d$, where $A,B$ are points and  $c,d$ are hyperplanes, is coherent if and only if either $A=B$, or $c=d$, or else the line $AB$ has a non-trivial intersection with the codimension $2$ subspace $c \cap d$. 
  \end{proposition}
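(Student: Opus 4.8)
The plan is to reduce coherence to the vanishing of a single $2\times 2$ determinant and then read off the three geometric alternatives by elementary linear algebra. Fix lifts $\mathbf A,\mathbf B$ of the two points and covectors $\mathbf c,\mathbf d$ representing the two hyperplanes in the $(d+1)$-dimensional space $V$ with $\P^d=\P(V)$. Taking the vertices in clockwise order $A,c,B,d$ (point, hyperplane, point, hyperplane), the multi-ratio from Definition \ref{def:ct} specializes to
$$
[A,c,B,d]=\frac{\mathbf c(\mathbf A)\,\mathbf d(\mathbf B)}{\mathbf c(\mathbf B)\,\mathbf d(\mathbf A)},
$$
so the tile is coherent exactly when this ratio is defined and equals $1$, i.e. when $\mathbf c(\mathbf B)\,\mathbf d(\mathbf A)\neq 0$ and $\mathbf c(\mathbf A)\,\mathbf d(\mathbf B)=\mathbf c(\mathbf B)\,\mathbf d(\mathbf A)$. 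As noted after Definition \ref{def:ct}, coherence forces all four pairings $\mathbf c(\mathbf A),\mathbf c(\mathbf B),\mathbf d(\mathbf A),\mathbf d(\mathbf B)$ to be nonzero, i.e. neither $A$ nor $B$ lies on $c$ or $d$; I would carry this non-degeneracy as a standing hypothesis.

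The key observation is that the coherence equation $\mathbf c(\mathbf A)\mathbf d(\mathbf B)=\mathbf c(\mathbf B)\mathbf d(\mathbf A)$ is literally the vanishing
$$
\det\begin{pmatrix}\mathbf c(\mathbf A) & \mathbf c(\mathbf B)\\[2pt] \mathbf d(\mathbf A) & \mathbf d(\mathbf B)\end{pmatrix}=0 .
$$
I would interpret this through the linear map $\phi\colon V\to\C^2$, $\phi(\mathbf v)=(\mathbf c(\mathbf v),\mathbf d(\mathbf v))$, whose values $\phi(\mathbf A),\phi(\mathbf B)$ are the two columns of this matrix. Thus the determinant vanishes iff $\phi(\mathbf A)$ and $\phi(\mathbf B)$ are linearly dependent, and the crucial point is that $\ker\phi=\ker\mathbf c\cap\ker\mathbf d$ is precisely the subspace cutting out the codimension-$2$ locus $c\cap d$.

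From here I would split into the three cases of the statement. If $A=B$ the columns coincide up to scale, the determinant vanishes, and by non-degeneracy the ratio is visibly $1$; symmetrically, if $c=d$ the two rows are proportional and the same holds. In the remaining case $A\neq B$ and $c\neq d$, the span $\langle\mathbf A,\mathbf B\rangle$ is genuinely two-dimensional and represents the line $AB$, while $c\cap d$ is honestly of codimension $2$. Here $\det=0$ means there is $(\alpha,\beta)\neq(0,0)$ with $\phi(\alpha\mathbf A+\beta\mathbf B)=0$; since $A\neq B$ the vector $\alpha\mathbf A+\beta\mathbf B$ is nonzero and lies in $\ker\mathbf c\cap\ker\mathbf d$, hence represents a common point of $AB$ and $c\cap d$, and conversely such a common point yields a linear dependence. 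This gives coherence iff $AB$ meets $c\cap d$ nontrivially.

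The step requiring the most care is not any substantive geometry but the bookkeeping of non-degeneracy. In the forward direction $A,B\notin c,d$ comes for free from coherence, so the reduction to $\det=0$ is automatic. In the backward direction one must ensure the displayed ratio is actually defined before concluding it equals $1$: the bare incidence in case (c) can also be realized degenerately (e.g. with $A\in c\cap d$), which is excluded precisely by the standing assumption that the vertices of a coherent tile lie off the adjacent hyperplanes. Once this is kept straight, the equivalence follows immediately from the single determinant identity above.
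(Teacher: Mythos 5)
Your proof is correct. Note that the paper does not actually prove this proposition: it is imported verbatim from \cite{fomin2023incidences} (Proposition 2.5 there) with only a citation, so there is no in-paper argument to compare against. Your reduction of the coherence condition $[A,c,B,d]=1$ to the vanishing of the $2\times 2$ determinant of pairings, followed by the interpretation of column dependence via the map $\phi(\mathbf v)=(\mathbf c(\mathbf v),\mathbf d(\mathbf v))$ with $\ker\phi=\ker\mathbf c\cap\ker\mathbf d$, is the standard and natural argument, and your handling of the non-degeneracy bookkeeping (all four pairings nonzero, which the paper notes is built into the definition of coherence in \cite{fomin2023incidences}) is exactly the point that needs care in the backward direction.
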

  
  \begin{figure}[t]
\centering
  \centering
\includegraphics[scale = 0.4]{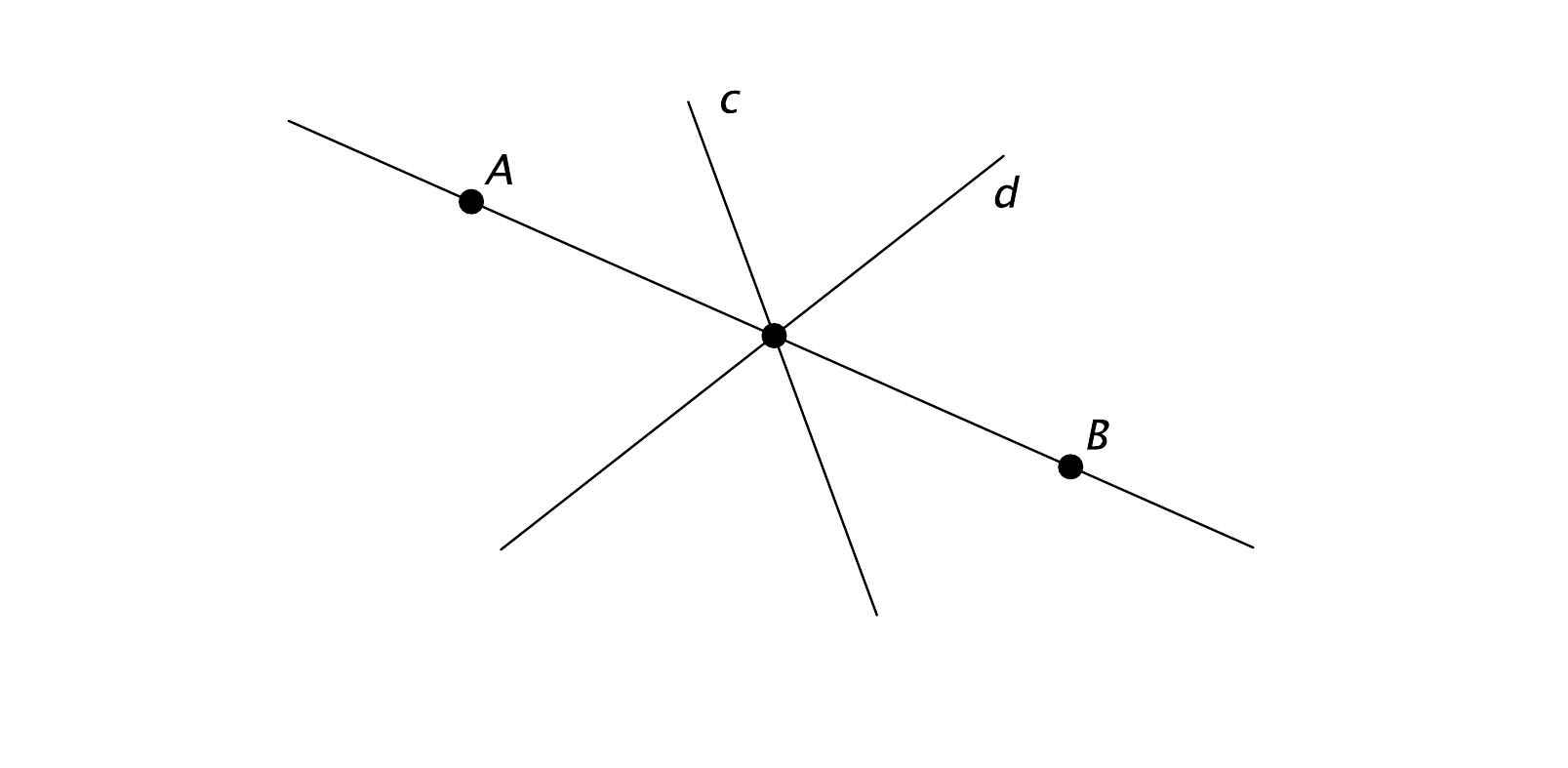}

\caption{Coherence of a quadrilateral labeled with points $A,B$ and lines $c, d$.
}\label{fig:quad}
\end{figure}
In the planar case, the latter condition just means that the lines $AB,c,d$ are concurrent, see Figure \ref{fig:quad}. 
  In two dimensions one also has the following interpretation of coherent hexagons:
    \begin{proposition}[see \cite{fomin2023incidences}, Corollary 4.4]\label{prop:des}
  Suppose $A_1B_1C_1$ and $A_2B_2C_2$ are two generic planar triangles, with sides $a_1 = B_1C_1, b_1 = A_1C_1, c_1 = A_1B_1$, $a_2 = B_2C_2, b_2 = A_2C_2, c_2 = A_2B_2$. Then a hexagon  whose vertices are clockwise labeled $A_1, b_2, C_1, a_2, B_1, c_2$ is coherent if and only if the triangles  $A_1B_1C_1$ and $A_2B_2C_2$ constitute a Desargues configuration, i.e., one of the following equivalent conditions hold:
  \begin{itemize}
  \item  the triangles  $A_1B_1C_1$ and $A_2B_2C_2$ are centrally perspective, meaning that the lines $A_1A_2$, $B_1B_2$, $C_1C_2$ are concurrent;
    \item  the triangles  $A_1B_1C_1$ and $A_2B_2C_2$ are axially perspective, meaning that the points $a_1 \cap a_2$, $b_1 \cap b_2$, $c_1 \cap c_2$ are collinear. 
  \end{itemize}
  See Figure \ref{fig:dsrg}.
  \end{proposition}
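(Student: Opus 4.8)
The plan is to reduce the coherence condition to an explicit monomial identity in coordinates and then recognize that identity as the concurrency (central perspectivity) condition; the equivalence with axial perspectivity is then exactly the classical Desargues theorem. First I would unfold coherence using the definition of the multi-ratio. The hexagon is labeled clockwise $A_1, b_2, C_1, a_2, B_1, c_2$, so its three points are $A_1, C_1, B_1$ and its three lines are $b_2, a_2, c_2$; representing the sides of the second triangle by covectors $\boldsymbol a_2 = \mathbf B_2 \times \mathbf C_2$, $\boldsymbol b_2 = \mathbf C_2 \times \mathbf A_2$, $\boldsymbol c_2 = \mathbf A_2 \times \mathbf B_2$, each pairing $\boldsymbol\ell(\mathbf X)$ becomes a $3\times 3$ determinant. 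Writing $[XYZ]:=\det(\mathbf X,\mathbf Y,\mathbf Z)$, the coherence condition $[A_1,b_2,C_1,a_2,B_1,c_2]=1$ becomes
$$
\frac{[C_2 A_2 A_1]\,[B_2 C_2 C_1]\,[A_2 B_2 B_1]}{[C_2 A_2 C_1]\,[B_2 C_2 B_1]\,[A_2 B_2 A_1]} = 1 .
$$
Each line appears once in the numerator and once in the denominator, as does each point, so every rescaling cancels and the expression is well-defined projectively.

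Next I would normalize coordinates so that $A_2, B_2, C_2$ is the standard coordinate triangle $(1,0,0),(0,1,0),(0,0,1)$. Under this choice every bracket above collapses to a single coordinate of a vertex of the first triangle: for any $X=(x_1,x_2,x_3)$ one has $[B_2 C_2 X]=x_1$, $[C_2 A_2 X]=x_2$, and $[A_2 B_2 X]=x_3$. Writing $A_1=(\alpha_1,\alpha_2,\alpha_3)$, $B_1=(\beta_1,\beta_2,\beta_3)$, $C_1=(\gamma_1,\gamma_2,\gamma_3)$, the coherence condition reduces to the monomial identity $\alpha_2\,\beta_3\,\gamma_1 = \alpha_3\,\beta_1\,\gamma_2$. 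I would then compute the central perspectivity condition directly: the lines $A_1A_2$, $B_1B_2$, $C_1C_2$ have covectors $\mathbf A_1\times\mathbf A_2=(0,\alpha_3,-\alpha_2)$, $\mathbf B_1\times\mathbf B_2=(-\beta_3,0,\beta_1)$, $\mathbf C_1\times\mathbf C_2=(\gamma_2,-\gamma_1,0)$, and these three lines are concurrent if and only if the determinant of the matrix with these rows vanishes, which expands to $\alpha_3\beta_1\gamma_2-\alpha_2\beta_3\gamma_1=0$. This is precisely the coherence identity, so coherence of the hexagon is equivalent to central perspectivity of the two triangles.

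To finish, I would invoke the classical Desargues theorem, which gives the equivalence of central and axial perspectivity for the two (generic) triangles, thereby identifying coherence with the full Desargues configuration as stated. I expect the only real care to be in the bookkeeping: one must keep the cyclic labeling $A_1, b_2, C_1, a_2, B_1, c_2$ fixed throughout, since a different interleaving of the points and sides would produce a monomial that is \emph{not} the concurrency determinant, and one must use the genericity hypothesis to guarantee that the two triangles are nondegenerate, that no relevant bracket in the denominator vanishes (so the multi-ratio is defined), and that Desargues applies cleanly. The computation itself is short; the substantive point is the exact coincidence between the coherence monomial and the perspectivity determinant, which is what makes this particular hexagon encode Desargues.
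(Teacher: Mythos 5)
Your proof is correct. Note that the paper itself gives no proof of this proposition --- it is imported verbatim from \cite{fomin2023incidences} (Corollary 4.4) --- so there is no in-paper argument to compare against; what you have written is a self-contained verification. The bookkeeping checks out: with the alternating order $A_1, b_2, C_1, a_2, B_1, c_2$ the multi-ratio from Definition \ref{def:ct} is $\frac{b_2(A_1)\,a_2(C_1)\,c_2(B_1)}{b_2(C_1)\,a_2(B_1)\,c_2(A_1)}$, which is exactly your bracket expression, and after normalizing $A_2,B_2,C_2$ to the coordinate triangle both the coherence condition and the concurrency determinant for $A_1A_2$, $B_1B_2$, $C_1C_2$ reduce to $\alpha_2\beta_3\gamma_1=\alpha_3\beta_1\gamma_2$. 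Each vector and covector appears once upstairs and once downstairs, so the identity is scale-invariant as you claim, and genericity ensures the three denominators are nonzero and that the classical Desargues theorem supplies the equivalence with axial perspectivity. The one virtue an invariant (coordinate-free) argument would add is robustness under relabeling --- as you yourself note, the result is sensitive to the specific interleaving of the $A_i$'s with the opposite sides $b_2,a_2,c_2$, and your normalization makes that dependence completely explicit, which is arguably a feature rather than a defect.
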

  
  \begin{figure}[t]
\centering
  \centering
\includegraphics[scale = 0.4]{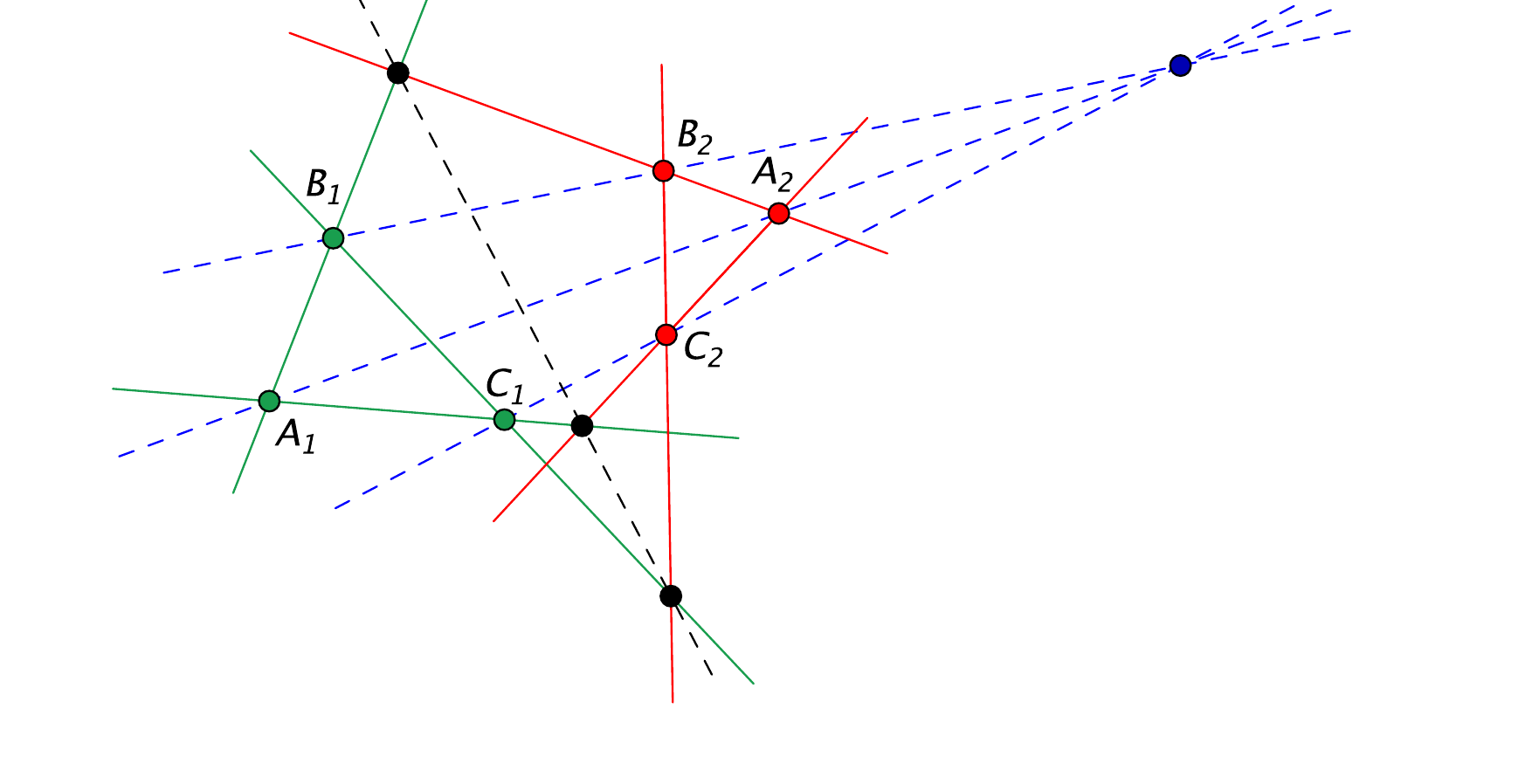}

\caption{Desargues configuration; this is equivalent to coherence of the hexagon labeled $A_1, b_2, C_1, a_2, B_1, c_2$, where $a_2 = B_2C_2, b_2 = A_2C_2, c_2 = A_2B_2$.
}\label{fig:dsrg}
\end{figure}
  
  The main application of coherence is the following \emph{master theorem}:
  
  \begin{theorem}[see \cite{fomin2023incidences}, Theorem 2.6]\label{mthm} Consider a tiling of a closed oriented surface. If all but one tiles are coherent,  then the remaining tile is coherent as well.
  
  \end{theorem}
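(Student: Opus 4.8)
The plan is to prove the master theorem by reducing it to a multiplicative telescoping identity on the multi-ratios of all tiles. The key observation is that the coherence condition for a tile is the statement that a certain product of ratios of the form $\boldsymbol{\ell}(\mathbf{A})/\boldsymbol{\ell}(\mathbf{A}')$ — one factor for each edge of the tile, comparing the covector at a black vertex against the two white-vertex points adjacent to it along that tile — equals $1$. So I would first rewrite the multi-ratio $[A_1,\ell_1,\dots,A_n,\ell_n]$ of a single tile as a product over the edges of that tile, where each edge $e$ joining a white vertex $w$ (with vector $\mathbf{A}_w$) to a black vertex $b$ (with covector $\boldsymbol{\ell}_b$) contributes a factor that records, within the cyclic clockwise order around that tile, whether $w$ is the ``incoming'' or ``outgoing'' white neighbor of $b$. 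Concretely, each factor is $\boldsymbol{\ell}_b(\mathbf{A}_w)$ raised to $\pm 1$ according to the orientation.

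The main point is then a cancellation argument. Every edge of the tiling is shared by exactly two tiles, and because the surface is oriented, the induced orientations (clockwise boundary traversals) of the two adjacent tiles run in opposite directions along that shared edge. I would show that, for the fixed representatives $\mathbf{A}_w$ and $\boldsymbol{\ell}_b$ chosen once and for all (this is crucial — the multi-ratio is independent of the scaling of each representative, so I may fix them globally), the factor contributed by a given edge to one of its two adjacent tiles is exactly the reciprocal of the factor it contributes to the other. Hence, taking the product of the multi-ratios over \emph{all} tiles of the tiling, every edge factor cancels against its partner, and the total product is identically $1$:
\begin{equation*}
\prod_{\text{tiles } t} [\,t\,] = 1.
\end{equation*}
Here I must be careful that the sign/orientation bookkeeping in the edge factorization is consistent, so that ``the factor from $t$'' and ``the factor from $t'$'' across a shared edge are genuine reciprocals; this is where the hypothesis that the surface is \emph{closed} and \emph{oriented} enters, guaranteeing that the boundary orientations match up globally and that there are no leftover boundary edges.

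Granting this global identity, the theorem follows immediately: if all tiles but one are coherent, then their multi-ratios are each equal to $1$, so the product over all tiles reduces to the multi-ratio of the remaining tile, which must therefore also equal $1$, i.e., the last tile is coherent. I anticipate that the main obstacle is the careful setup of the edge factorization and the verification that, under a single global choice of vectors and covectors, adjacent tiles contribute reciprocal factors along each shared edge — in particular tracking how a white point appears as numerator in one tile's factor and denominator in the adjacent tile's factor. One subtlety worth flagging is the nondegeneracy caveat built into the definition of coherence (no point lying on its neighboring hyperplanes): I would note that the telescoping identity holds as an identity of rational functions wherever all factors are defined, and that coherence of the $n-1$ tiles already forces the relevant quantities to be nonzero, so the conclusion about the last tile is meaningful. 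Once the reciprocity of edge factors is established, the remainder is a one-line telescoping argument.
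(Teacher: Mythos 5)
The paper states this result with a citation to \cite{fomin2023incidences} and does not reprove it; the proof given there is exactly your telescoping argument, in which each edge of the tiling contributes the factor $\boldsymbol{\ell}_b(\mathbf{A}_w)$ to the numerator of one adjacent tile's multi-ratio and to the denominator of the other (because the clockwise boundary orientations of the two tiles traverse the shared edge in opposite directions), so the product of all multi-ratios is $1$. Your proposal is correct and matches that approach, including the correct observations that one should fix global representatives $\mathbf{A}_w$, $\boldsymbol{\ell}_b$ once and for all and that closedness and orientability are what make every edge cancel.
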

  It is shown in  \cite{fomin2023incidences} that multiple classical results of linear incidence geometry are particular instances of this theorem.
  
\section{Coherent double circuit configurations and moves}\label{sec:main}

\begin{definition}
Let $\Sigma$ be a closed surface. A tiling of $\Sigma$ (see Definition \ref{def:tiling}) is called  a \emph{double circuit configuration} if points/hyperplanes assigned to neighbors of every vertex form a circuit.
\end{definition}
Since hyperplanes can be regarded as points in the dual projective space, the notion of a circuit of hyperplanes is well-defined. For instance, three lines in $\P^2$ for a circuit if they are pairwise distinct and concurrent. \par
Since a double circuit configuration is, basically, two circuit configurations, moves are defined in an obvious way, see Figures \ref{shrink4} and \ref{urban3}.  Next to each vertex we write the associated point or hyperplane in $\P^d$.

 \begin{figure}[t]
 \centering
\begin{tikzpicture}[, scale = 1, , label distance=-0.5mm]
\node [draw,circle,color=black, fill=black,inner sep=0pt,minimum size=5pt, label = \footnotesize$\ell$] (A) at (0,0) {};
\node [draw,circle,color=black, fill=white,,inner sep=0pt,minimum size=5pt,label = left:\footnotesize$B_1$] (ANW) at (-0.5,0.5) {};
\node [draw,circle,color=black, fill=white,,inner sep=0pt,minimum size=5pt,label = left:\footnotesize$B_k$] (ASW) at (-0.5,-0.5) {};
\node [label = {[label distance=-0cm]left:\footnotesize${\vdots}$}] () at (-0.6,0.1) {};
\node [draw,circle,color=black, fill=white,inner sep=0pt,minimum size=5pt, label = \footnotesize$A$] (B) at (1,0) {};
\node [draw,circle,color=black, fill=black,inner sep=0pt,minimum size=5pt, label = \footnotesize$\ell$] (C) at (2,0) {};
\node [draw,circle,color=black, fill=white,,inner sep=0pt,minimum size=5pt,label = right:\footnotesize$C_1$] (CNE) at (2.5,0.5) {};
\node [draw,circle,color=black, fill=white,,inner sep=0pt,minimum size=5pt,label = right:\footnotesize$C_l$] (CSE) at (2.5,-0.5) {};
\node [label = {[label distance=-0cm]right:\footnotesize$\vdots$}] () at (2.6,0.1) {};
\node [draw,circle,color=black, fill=black,inner sep=0pt,minimum size=5pt, label = \footnotesize$\ell$] (D) at (7,0) {};
\node [draw,circle,color=black, fill=white,,inner sep=0pt,minimum size=5pt,label = left:\footnotesize$B_1$] (DNW) at (6.5,0.5) {};
\node [draw,circle,color=black, fill=white,,inner sep=0pt,minimum size=5pt,label =left:\footnotesize$B_k$] (DSW) at (6.5,-0.5) {};
\node [label = {[label distance=-0cm]left:\footnotesize$\vdots$}] () at (6.4,0.1) {};
\node [draw,circle,color=black, fill=white,,inner sep=0pt,minimum size=5pt,label = right:\footnotesize$C_1$] (DSE) at (7.5,0.5) {};
\node [draw,circle,color=black, fill=white,,inner sep=0pt,minimum size=5pt,label = right:\footnotesize$C_l$] (DNE) at (7.5,-0.5) {};
\node [label = {[label distance=-0cm]right:\footnotesize$\vdots$}] () at (7.6,0.1) {};
\draw (A) -- (B) -- (C);
\draw (A) -- (ANW);
\draw (A) -- (ASW);
\draw (C) -- (CNE);
\draw (C) -- (CSE);
\node () at (4.5,0) {\footnotesize$\longleftrightarrow$};
\draw (D) -- (DSW);
\draw (D) -- (DNW);
\draw (D) -- (DSE);
\draw (D) -- (DNE);

\end{tikzpicture}
\caption{Removal/addition of a degree two white vertex in a double circuit configuration. Removal/addition of a degree two black vertex is similar.}\label{shrink4}
\end{figure}
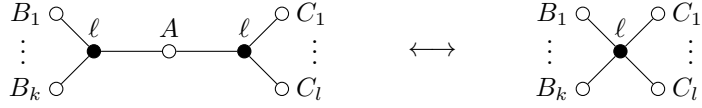
 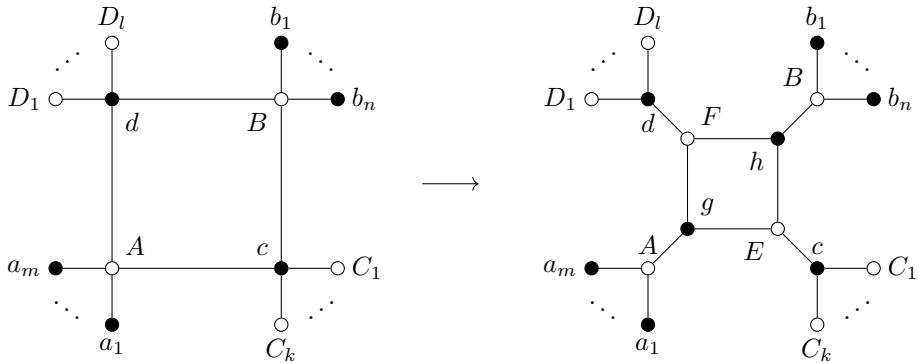
\begin{figure}[b]
 \centering
\begin{tikzpicture}[scale = 0.75, label distance=-0.5mm]
\node [draw,circle,color=black, fill=white,inner sep=0pt,minimum size=5pt, label = 45:\footnotesize$A$] (A) at (-9.5,0) {};
\node [draw,circle,color=black, fill=black,inner sep=0pt,minimum size=5pt, label = below:\footnotesize$a_1$] (A1) at (-9.5,-1) {};
\node [draw,circle,color=black, fill=black,inner sep=0pt,minimum size=5pt, label = left:\footnotesize$a_m$] (A2) at (-10.5,0) {};
\draw (A) -- (A1);
\draw (A) --(A2);
\node  () at (-10.3,-0.6) {\footnotesize$\ddots$};
\node [draw,circle,color=black, fill=black,inner sep=0pt,minimum size=5pt, label = 135:\footnotesize$c$] (B) at (-6.5,0) {};
\node [draw,circle,color=black, fill=white,inner sep=0pt,minimum size=5pt, label = right:\footnotesize$C_1$] (B1) at (-5.5,0) {};
\node [draw,circle,color=black, fill=white,inner sep=0pt,minimum size=5pt, label = below:\footnotesize$C_k$] (B2) at (-6.5,-1) {};
\node [] () at (-5.8,-0.6) {\footnotesize$\iddots$};
\draw (B) -- (B1);
\draw (B) -- (B2);
\node [draw,circle,color=black, fill=white,inner sep=0pt,minimum size=5pt, label = 225:\footnotesize$B$] (C) at (-6.5,3) {};
\node [draw,circle,color=black, fill=black,inner sep=0pt,minimum size=5pt, label = right:\footnotesize$b_n$] (C1) at (-5.5,3) {};
\node [draw,circle,color=black, fill=black,inner sep=0pt,minimum size=5pt, label = above:\footnotesize$b_1$] (C2) at (-6.5,4) {};
\draw (C) -- (C1);
\draw (C) -- (C2);
\node [] () at (-5.8,3.8) {\footnotesize$\ddots$};
\node [draw,circle,color=black, fill=black,inner sep=0pt,minimum size=5pt, label = -45:\footnotesize$d$] (D) at (-9.5,3) {};
\node [draw,circle,color=black, fill=white,inner sep=0pt,minimum size=5pt, label = left:\footnotesize$D_1$] (D1) at (-10.5,3) {};
\node [draw,circle,color=black, fill=white,inner sep=0pt,minimum size=5pt, label = above:\footnotesize$D_l$] (D2) at (-9.5,4) {};
\node [] () at (-10.3,3.8) {\footnotesize$\iddots$};
\draw (D) -- (D1);
\draw (D) -- (D2);
\draw [thin] (A) -- (B) node[midway, below] {$$}; \draw (B) -- (C) node[midway, right] {$$}; \draw (C) -- (D) node[midway, above] {$$}; \draw (D) -- (A) node[midway, left] {$$};;
\node [draw,circle,color=black, fill=white,inner sep=0pt,minimum size=5pt, label = 90:\footnotesize$A$] (A) at (0,0) {};
\node [draw,circle,color=black, fill=black,inner sep=0pt,minimum size=5pt, label = \footnotesize$c$] (B) at (3,0) {};
\node [draw,circle,color=black, fill=white,inner sep=0pt,minimum size=5pt, label = 135:\footnotesize$B$] (C) at (3,3) {};
\node [draw,circle,color=black, fill=black,inner sep=0pt,minimum size=5pt, label = below:\footnotesize$d$] (D) at (0,3) {};
\node [draw,circle,color=black, fill=black,inner sep=0pt,minimum size=5pt, label = 45:\footnotesize$g$] (A1) at (0.7,0.7) {};
\node [draw,circle,color=black, fill=white,inner sep=0pt,minimum size=5pt, label =-135: \footnotesize$E$] (B1) at (2.3,0.7) {};
\node [draw,circle,color=black, fill=black,inner sep=0pt,minimum size=5pt, label = 225:\footnotesize$h$] (C1) at (2.3,2.3) {};
\node [draw,circle,color=black, fill=white,inner sep=0pt,minimum size=5pt, label =45:\footnotesize$F$] (D1) at (0.7,2.3) {};
\draw [thin] (A1) -- (B1) node[midway, below] {$$}; \draw (B1) -- (C1) node[midway, right] {$$}; \draw (C1) -- (D1) node[midway, above] {$$}; \draw (D1) -- (A1) node[midway, left] {$$};;
\draw (A) -- (A1);
\draw (B) -- (B1);
\draw (C) -- (C1);
\draw (D) -- (D1);
\node [draw,circle,color=black, fill=black,inner sep=0pt,minimum size=5pt, label = below:\footnotesize$a_1$] (A1) at (0,-1) {};
\node [draw,circle,color=black, fill=black,inner sep=0pt,minimum size=5pt, label = left:\footnotesize$a_m$] (A2) at (-1,0) {};
\draw (A) -- (A1);
\draw (A) --(A2);
\node  () at (-0.8,-0.6) {\footnotesize$\ddots$};
\node [draw,circle,color=black, fill=white,inner sep=0pt,minimum size=5pt, label = right:\footnotesize$C_1$] (B1) at (4,0) {};
\node [draw,circle,color=black, fill=white,inner sep=0pt,minimum size=5pt, label = below:\footnotesize$C_k$] (B2) at (3,-1) {};
\node [] () at (3.7,-0.6) {\footnotesize$\iddots$};
\draw (B) -- (B1);
\draw (B) -- (B2);
\node [draw,circle,color=black, fill=black,inner sep=0pt,minimum size=5pt, label = right:\footnotesize$b_n$] (C1) at (4,3) {};
\node [draw,circle,color=black, fill=black,inner sep=0pt,minimum size=5pt, label = above:\footnotesize$b_1$] (C2) at (3,4) {};
\draw (C) -- (C1);
\draw (C) -- (C2);
\node [] () at (3.7,3.8) {\footnotesize$\ddots$};
\node [draw,circle,color=black, fill=white,inner sep=0pt,minimum size=5pt, label = left:\footnotesize$D_1$] (D1) at (-1,3) {};
\node [draw,circle,color=black, fill=white,inner sep=0pt,minimum size=5pt, label = above:\footnotesize$D_l$] (D2) at (0,4) {};
\node [] () at (-0.8,3.8) {\footnotesize$\iddots$};
\draw (D) -- (D1);
\draw (D) -- (D2);
\draw [-> ](-4,1.5) -- (-3, 1.5);

\end{tikzpicture}
\caption{An urban renewal of a double circuit configuration. Here $E= \langle A, B\rangle \cap \langle C_1, \dots, C_k \rangle$, $F= \langle A, B\rangle \cap \langle D_1, \dots, D_l \rangle$, $g = \langle c \cap d, a_1 \cap \dots \cap a_m\rangle$, $h= \langle c \cap d, b_1 \cap \dots \cap b_n\rangle$.}\label{urban3}
\end{figure}

Our main objects of study are \emph{coherent double circuit configurations}.

\begin{theorem}\label{mainThm}
Moves preserve coherence of double circuit configurations.
\end{theorem}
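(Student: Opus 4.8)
The statement has two parts to verify for each move type: that the circuit conditions (V) are preserved, and that the coherence conditions (F) are preserved. The circuit part is already settled — this is exactly the content of Section \ref{sec:crc}, where it is observed that the moves on circuit configurations preserve circuit conditions on both the white-vertex (point) side and, dually, the black-vertex (hyperplane) side. So the real work is showing that coherence of all tiles is preserved, and by duality it suffices to treat the point side and invoke symmetry. I would organize the proof move-by-move: first the degree-two vertex removal/addition (Figure \ref{shrink4}), then the urban renewal (Figure \ref{urban3}).

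For the degree-two vertex moves, the plan is to check that coherence is a local, tile-by-tile condition compatible with merging two tiles across a removed vertex. When a degree-two white vertex $A$ (flanked by black vertices both labeled $\ell$, as forced by the circuit condition at $A$) is removed, two adjacent tiles merge into one. The key computation is that the multi-ratio of the merged tile equals the product of the multi-ratios of the two original tiles, because the factors associated to the removed edges cancel telescopically in the definition of the multi-ratio. Since both original tiles were coherent (multi-ratio $1$), the merged tile has multi-ratio $1$; conversely for addition one splits a coherent tile, using the circuit/collinearity data to pin down the new vertex. I would present this as a short multiplicativity lemma for multi-ratios under edge contraction.

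The urban renewal is the main obstacle and the heart of the theorem. Here four boundary tiles are modified and a central square tile is created, with new points $E,F$ and new hyperplanes $g,h$ defined as in Figure \ref{urban3}. The cleanest strategy is to avoid direct multi-ratio computation and instead invoke the master theorem (Theorem \ref{mthm}). The idea is to build an auxiliary tiling of a sphere (or a suitable closed surface) whose tiles are precisely the tiles altered by the urban renewal together with their images, arranged so that the hypothesis "all tiles coherent before the move" corresponds to all-but-one tiles of the auxiliary tiling being coherent, with the one exceptional tile being the output we wish to certify. Concretely, I would glue the pre-move local picture and the post-move local picture along their common boundary (the outer vertices $A,B,c,d$ and their pendant neighbors $C_i,D_j,a_i,b_j$, which are unchanged), so that coherence of every tile except the target follows from the input hypotheses, and the master theorem then forces the target tile to be coherent.

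The technical crux will be verifying that the auxiliary gluing is genuinely a tiling of a closed oriented surface with the correct incidence data, and that the newly introduced $E,F,g,h$ satisfy the circuit conditions needed for the intermediate tiles to even be well-defined tiles — for instance, that $E = \langle A,B\rangle \cap \langle C_1,\dots,C_k\rangle$ lies in the span forced by the circuit at the new black vertex $c$, and dually that $g = \langle c\cap d,\, a_1\cap\dots\cap a_m\rangle$ is a legitimate hyperplane through the appropriate points. I would check these incidences using Proposition \ref{prop:coh} (the quadrilateral/concurrency criterion) for the small square tiles and the definitions of $E,F,g,h$, reducing the whole urban renewal to a finite, combinatorially explicit application of the master theorem. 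The fallback, should the master-theorem packaging prove awkward, is a direct elementary verification via Desargues' theorem (Proposition \ref{prop:des}), as the introduction promises all such identities ultimately reduce to iterated Desargues.
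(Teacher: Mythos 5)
Your treatment of the degree-two move is essentially the paper's: the identity $[\dots, a, B, a, \dots] = [\dots, a, \dots]$ is exactly the telescoping cancellation you describe. One correction of framing: the two faces adjacent to the path $\ell$--$A$--$\ell$ do not merge into one tile; each of them separately has its boundary word shortened, and each multi-ratio is individually unchanged, so no product of multi-ratios is involved at this step (multiplicativity of the multi-ratio under gluing is needed later, in the urban renewal).

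The genuine gap is in the urban renewal. You propose to glue the pre-move and post-move local pictures into a closed surface and apply the master theorem once, ``with the one exceptional tile being the output we wish to certify.'' But the output is not one tile: the post-move tiling $\mathcal R$ differs from the pre-move tiling $\mathcal L$ in five tiles --- the new central square $gEhF$ \emph{and} the four surrounding tiles, each of which acquires two new boundary vertices (e.g.\ the tile of $\mathcal L$ adjacent to the edge $Ad$ gains $F$ and $g$ on its boundary). A single application of Theorem \ref{mthm} certifies only one tile, so your auxiliary sphere cannot deliver all five. The paper instead interposes an intermediate tiling $\mathcal I$ on the \emph{same} surface, obtained by subdividing the square $AcBd$ into the central square plus four quadrilaterals $AdFg$, $BdFh$, $BhEc$, $AgEc$. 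Those four are verified directly via Proposition \ref{prop:coh} using the key reduction $AF = AB$ and $g\cap d = c\cap d$, so that $AF\cap g\cap d = AB\cap c\cap d$ is non-trivial precisely because $AcBd$ was a coherent tile of $\mathcal L$; the fifth (central) tile then follows from the master theorem; and finally $\mathcal R$ is obtained from $\mathcal I$ by gluing adjacent tiles, with multiplicativity of the multi-ratio certifying the four modified surrounding tiles. You do gesture at checking the small quadrilaterals with Proposition \ref{prop:coh}, but you frame those checks as circuit/well-definedness conditions rather than as the coherence verifications that carry the argument, and your plan contains no step that certifies the four modified surrounding tiles of $\mathcal R$. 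Supplying the explicit reduction above and the final gluing step would close the gap.
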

\begin{proof}
For addition/removal of a degree two vertex, this follows from the relation
$$
[\dots, a, B, a, \dots ] = [\dots, a, \dots]
$$
for the multi-ratio (along with an analogous relation with points and hyperplanes interchanged). 
 \begin{figure}[t]
 \centering
\begin{tikzpicture}[scale = 0.75, , label distance=-0.5mm]
\node [draw,circle,color=black, fill=white,inner sep=0pt,minimum size=5pt, label = 135:\footnotesize$A$] (A) at (0,0) {};
\node [draw,circle,color=black, fill=black,inner sep=0pt,minimum size=5pt, label = 45:\footnotesize$c$] (B) at (3,0) {};
\node [draw,circle,color=black, fill=white,inner sep=0pt,minimum size=5pt, label = 135:\footnotesize$B$] (C) at (3,3) {};
\node [draw,circle,color=black, fill=black,inner sep=0pt,minimum size=5pt, label = 225:\footnotesize$d$] (D) at (0,3) {};
\node [draw,circle,color=black, fill=black,inner sep=0pt,minimum size=5pt, label = 45:\footnotesize$g$] (A1) at (0.7,0.7) {};
\node [draw,circle,color=black, fill=white,inner sep=0pt,minimum size=5pt, label =135: \footnotesize$E$] (B1) at (2.3,0.7) {};
\node [draw,circle,color=black, fill=black,inner sep=0pt,minimum size=5pt, label = 225:\footnotesize$h$] (C1) at (2.3,2.3) {};
\node [draw,circle,color=black, fill=white,inner sep=0pt,minimum size=5pt, label =-45:\footnotesize$F$] (D1) at (0.7,2.3) {};
\draw [ color = black] (A) -- (B) -- (C) -- (D)  -- (A);
\draw [thin] (A1) -- (B1) node[midway, below] {$$}; \draw (B1) -- (C1) node[midway, right] {$$}; \draw (C1) -- (D1) node[midway, above] {$$}; \draw (D1) -- (A1) node[midway, left] {$$};;
\draw (A) -- (A1);
\draw (B) -- (B1);
\draw (C) -- (C1);
\draw (D) -- (D1);
\node [draw,circle,color=black, fill=black,inner sep=0pt,minimum size=5pt, label = below:\footnotesize$a_1$] (A1) at (0,-1) {};
\node [draw,circle,color=black, fill=black,inner sep=0pt,minimum size=5pt, label = left:\footnotesize$a_m$] (A2) at (-1,0) {};
\draw (A) -- (A1);
\draw (A) --(A2);
\node  () at (-0.8,-0.6) {\footnotesize$\ddots$};
\node [draw,circle,color=black, fill=white,inner sep=0pt,minimum size=5pt, label = right:\footnotesize$C_1$] (B1) at (4,0) {};
\node [draw,circle,color=black, fill=white,inner sep=0pt,minimum size=5pt, label = below:\footnotesize$C_k$] (B2) at (3,-1) {};
\node [] () at (3.7,-0.6) {\footnotesize$\iddots$};
\draw (B) -- (B1);
\draw (B) -- (B2);
\node [draw,circle,color=black, fill=black,inner sep=0pt,minimum size=5pt, label = right:\footnotesize$b_n$] (C1) at (4,3) {};
\node [draw,circle,color=black, fill=black,inner sep=0pt,minimum size=5pt, label = above:\footnotesize$b_1$] (C2) at (3,4) {};
\draw (C) -- (C1);
\draw (C) -- (C2);
\node [] () at (3.7,3.8) {\footnotesize$\ddots$};
\node [draw,circle,color=black, fill=white,inner sep=0pt,minimum size=5pt, label = left:\footnotesize$D_1$] (D1) at (-1,3) {};
\node [draw,circle,color=black, fill=white,inner sep=0pt,minimum size=5pt, label = above:\footnotesize$D_l$] (D2) at (0,4) {};
\node [] () at (-0.8,3.8) {\footnotesize$\iddots$};
\draw (D) -- (D1);
\draw (D) -- (D2);

\end{tikzpicture}
\caption{To the proof of Theorem \ref{mainThm}.}\label{urban4}
\end{figure}
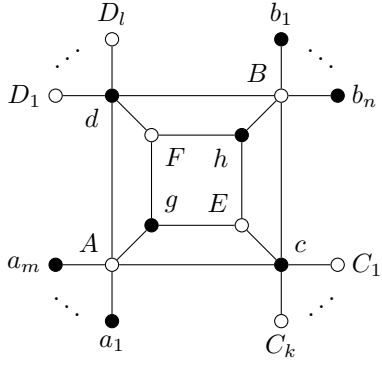
To prove that urban renewals preserve coherence, consider Figure \ref{urban3} and assume that the tiling on the left is coherent. Denote that tiling by $\mathcal L$. We aim to prove that the tiling on the right, which we'll denote by $\mathcal R$, is also coherent. To that end, consider an intermediate tiling (which is not a double circuit configuration!) $\mathcal I$ shown in Figure \ref{urban4}. 
%
Note that $\mathcal I$ has the same tiles as $\mathcal L$ except for the five tiles in the middle. So, to prove coherence of $\mathcal I$, it suffices to prove coherence of those five tiles. Furthermore, by Theorem \ref{mthm}, it is sufficient to establish coherence of four out of these five tiles. Consider, for instance, the tile $AdFg$. By definition of the point $F$, it belongs to the line $AB$, so $AF = AB$. Likewise $g \cap d = c \cap d$. So, $AF \cap g \cap d = AB\cap c \cap d$. The latter intersection is non-trivial since $AcBd$ is a tile in the tiling $\mathcal L$. So, the former intersection is also non-trivial, and the tile $AdFg$ is coherent. In the same way we get coherence of tiles $BdFh, BhEc, AgEc$. So, tiling $\mathcal I$ is indeed coherent. But $\mathcal R$ is obtained from $\mathcal I$ by gluing adjacent tiles, and so is coherent as well.
\end{proof}

\section{Example: pentagram map}\label{sec:ex1}

Consider a  more general version of the pentagram map where instead of short diagonals we use diagonals $P_i P_{i+k}$ for some fixed $k\in \{2, \dots, n-2\}$. Specifically, the evolution of an $n$-gon $P$ under such map is given by 
$T_k(P) = P'$, where 
$$P_i' = P_i P_{i+k} \cap P_{i+1} P_{i+k+1}.$$
In this notation the pentagram map defined above is just the special case $T = T_2$. 

The map $T_2$ was introduced by Schwartz \cite{Sch}. There is a growing body of literature on this subject, and we do not attempt to provide all the references. The maps $T_k$ for $k > 2$ appear, e.g., in \cite{gekhtman2011, izosimov2023long, glick2016}.
A variant of the map $T_k$ where one intersects non-consecutive diagonals is also studied in  \cite{zou2024}.



\begin{figure}[htbp]
    \centering
    \includegraphics[width=0.75\textwidth]{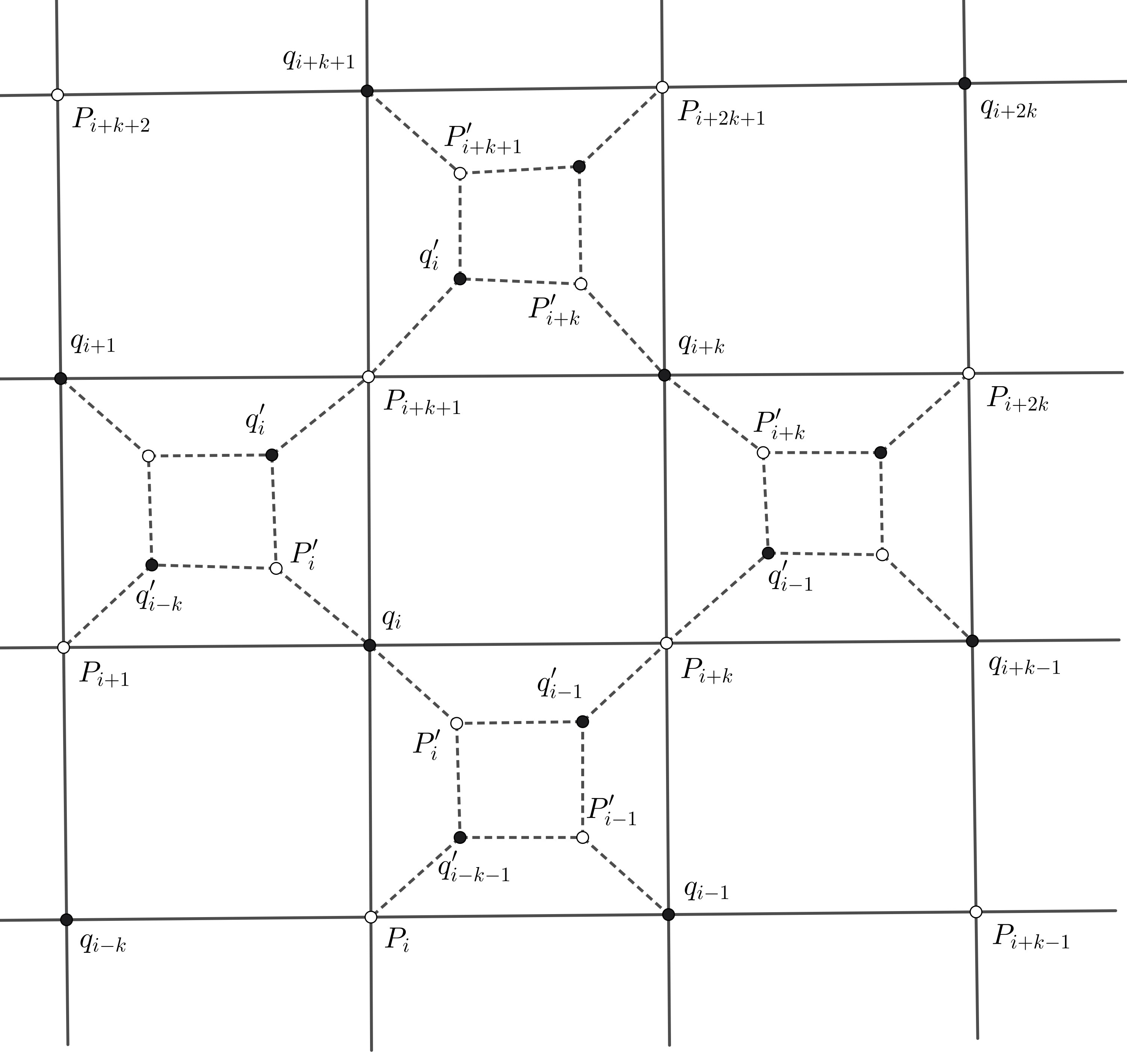} 
    \caption{The bipartite graph corresponding to the pentagram map $T_k$.}
    \label{fig:pentagram}
\end{figure}

One can also consider (dual) polygons defined by $n$-tuples of lines (as opposed to points), and the following pentagram dynamics $T_k(Q) = Q'$ on them: $$q_i' = \langle q_i \cap q_{i+1}, q_{i+k} \cap q_{i+k+1} \rangle.$$  Note that, in terms of line indices, the time direction is reversed: the forward time direction on $Q$ matches the reverse time direction on $P$. This changes however if we were to keep the data for $Q$ in terms of points rather than lines. Indeed,
define vertices of $Q$ as $Q_i = q_i \cap q_{i-k}$. Then $q_i \cap q_{i+1}$ lies on both $q_i'$ and $q_{i-k}'$, thus $Q_i'  = q_i' \cap q_{i-k}' = q_i \cap q_{i+1} = Q_i Q_{i+k} \cap Q_{i+1} Q_{i+k+1}$, which is the usual $T_k$ dynamics.

Consider now the bipartite graph on a torus shown in Figure \ref{fig:pentagram} (for now, ignore the dashed edges and vertices with primed labels). We plant vertices of $P$ and sides of $Q$ at white and black vertices respectively, and we index so that the indices grow by $k$ in the north-east direction and by $1$ in the north-west  direction. Then we quotient by the appropriate sublattice to identify vertices with the same index. The condition (V) is satisfied automatically, as the whole construction lives in the plane, and thus any four generic points/lines form a circuit. 

\begin{lemma}
Condition \emph{(F)} for this graph is equivalent to two conditions holding at the same time: $Q$ being inscribed into $P$ and $T_k(Q)$ being inscribed into $T_k(P)$. 
\end{lemma}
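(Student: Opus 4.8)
The plan is to reduce condition (F) to a per-face coherence condition and then identify, tile by tile, what that condition means geometrically. Since the graph of Figure \ref{fig:pentagram} is a quadrilateral tiling of the torus with every vertex of degree four, each face is a quadrilateral, and by bipartiteness its vertices alternate in color, giving two white vertices (points, say $P_a, P_b$) and two black vertices (lines, say $q_c, q_d$). By Proposition \ref{prop:coh}, such a tile is coherent precisely when (generically) the line $P_a P_b$ passes through the intersection point $q_c \cap q_d$; the degenerate alternatives $P_a = P_b$ or $q_c = q_d$ are excluded for generic configurations. Thus condition (F) is equivalent to one concurrency condition imposed at each quadrilateral face.

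Next I would read off the combinatorics of the faces from the ``NE by $k$, NW by $1$'' labeling. Computing the index increment of each pair of same-color corners of a unit cell, the faces split into a checkerboard of two types. In the first type the two white vertices are consecutive, $\{P_a, P_{a+1}\}$, while the two black vertices differ by $k$, $\{q_c, q_{c-k}\}$; in the second type the two white vertices differ by $k$, $\{P_b, P_{b+k}\}$, while the two black vertices are consecutive, $\{q_i, q_{i+1}\}$. I would verify that these are exactly the two face orbits and that they exhaust all faces in a fundamental domain.

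For the first type, the two black lines meet at $q_c \cap q_{c-k} = Q_c$, a vertex of $Q$, while the line through the two white points is the side $P_a P_{a+1}$ of $P$; hence coherence of such a tile says exactly that $Q_c$ lies on a side of $P$, and ranging over all such tiles yields precisely that $Q$ is inscribed in $P$ (consecutive vertices of $Q$ on consecutive sides of $P$). For the second type, the two black lines meet at $q_i \cap q_{i+1} = Q_i'$, a vertex of $T_k(Q)$, while the line through the two white points is the diagonal $P_b P_{b+k}$ of $P$. The key observation is that this diagonal is in fact a side of $T_k(P)$: since $P_{b-1}' = P_{b-1}P_{b+k-1} \cap P_b P_{b+k}$ and $P_b' = P_b P_{b+k} \cap P_{b+1}P_{b+k+1}$ both lie on $P_b P_{b+k}$, the side $P_{b-1}'P_b'$ of $T_k(P)$ coincides with $P_b P_{b+k}$. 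Thus coherence of a second-type tile says that the vertex $Q_i'$ of $T_k(Q)$ lies on a side of $T_k(P)$, and collectively these tiles express that $T_k(Q)$ is inscribed in $T_k(P)$.

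Finally I would assemble the two families: condition (F) holds iff every tile is coherent, which by the classification is equivalent to all first-type tiles being coherent together with all second-type tiles being coherent, i.e.\ to $Q$ being inscribed in $P$ and $T_k(Q)$ being inscribed in $T_k(P)$ simultaneously. The main obstacle is the bookkeeping: pinning down the index offsets so that the incidences produced by the first-type tiles match the \emph{consecutive}-vertices-on-\emph{consecutive}-sides pattern of the inscribed condition exactly (a bijection between faces in a fundamental domain and the defining incidences, with no omissions or repetitions), together with the verification that the relevant diagonal of $P$ is genuinely a side of $T_k(P)$ as above.
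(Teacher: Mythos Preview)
Your proposal is correct and follows essentially the same approach as the paper's proof: both apply Proposition~\ref{prop:coh} to the two checkerboard families of quadrilateral faces, interpret the first family as $Q_i\in P_iP_{i+1}$ and the second as $q_i\cap q_{i-1}$ lying on the diagonal $P_iP_{i+k}$, and then use that $P_{i-1}',P_i'\in P_iP_{i+k}$ and $q_i\cap q_{i-1}=Q_{i-1}'$ to rewrite the latter as $T_k(Q)$ inscribed in $T_k(P)$. The only difference is that the paper pins down the exact index offsets (e.g.\ the first-type tiles are $q_{i-k}P_{i+1}q_iP_i$, giving $Q_i\in P_iP_{i+1}$ directly), whereas you leave this as bookkeeping to be checked; filling that in would make your argument complete.
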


\begin{proof}
 We apply  Proposition \ref{prop:coh} in the planar case to the two types of tiles/faces we have in our graph: quadrilaterals  $q_{i-k} P_{i+1} q_i P_i$ and quadrilaterals $P_i q_i P_{i+k} q_{i-1}$. Coherence of the former means that line $P_i P_{i+1}$ passes through the point of intersection of $q_i$ with $q_{i-k}$, which is equivalent to $Q_i \in P_i P_{i+1}$. Coherence of the latter kind of tile means that line $P_i P_{i+k}$ passes through the point of intersection of $q_i$ with $q_{i-1}$. As $P_i', P_{i-1}' \in P_i P_{i+k}$ and $q_i \cap q_{i-1} \in q_{i-1}', q_{i-k-1}'$, this means that $Q_{i-1}' \in P_{i-1}' P_i'$. Thus $Q$ is inscribed into $P$ and $Q' = T_k(Q)$ is inscribed into $P' = T_k(P)$. 
\end{proof}

Now we consider dynamics by performing urban renewal at half of the tiles, followed by removal of degree $2$ vertices, as shown in Figure \ref{fig:pentagram}. 

\begin{proposition} \label{prop:pent}
 This dynamics coinsides with pentagram dynamics $T_k$ on $P$ and $Q$. 
\end{proposition}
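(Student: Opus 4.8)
The goal is to verify that the combinatorial dynamics --- urban renewal at half the faces followed by degree-two vertex removals --- implements precisely the maps $P \mapsto T_k(P)$ and $Q \mapsto T_k(Q)$. The plan is to track, through the moves of Figure~\ref{fig:pentagram}, the geometric data attached to white and black vertices using the explicit formulas for urban renewal given in Figure~\ref{urban3}, and to match the resulting points and hyperplanes against the defining formulas $P_i' = P_iP_{i+k}\cap P_{i+1}P_{i+k+1}$ and $q_i' = \langle q_i\cap q_{i+1},\, q_{i+k}\cap q_{i+k+1}\rangle$.

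First I would fix the local picture: identify which square faces of the bipartite graph on the torus undergo urban renewal, and write down, for a typical such face, the four surrounding white points and black lines in terms of the indexed data $P_i$ and $q_i$. According to the conventions set just above the Lemma, the two tile types are $q_{i-k}\,P_{i+1}\,q_i\,P_i$ and $P_i\,q_i\,P_{i+k}\,q_{i-1}$; I would select the faces on which urban renewal acts and read off the new interior white vertex as $E=\langle A,B\rangle\cap\langle C_1,\dots,C_k\rangle$ and the new interior black vertex as $g=\langle c\cap d,\ a_1\cap\dots\cap a_m\rangle$, specializing $A,B$ to the appropriate pair of $P$-vertices and $c,d$ to the appropriate pair of $q$-lines. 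The key computation is to check that the new white vertex produced at the center of a renewed face is exactly the line intersection $P_iP_{i+k}\cap P_{i+1}P_{i+k+1}$ defining $P_i'$, and dually that the new black vertex is the join $\langle q_i\cap q_{i+1},\, q_{i+k}\cap q_{i+k+1}\rangle$ defining $q_i'$.

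Next I would perform the degree-two vertex removals. After urban renewal, the original boundary white points $P_i$ and black lines $q_i$ become degree-two vertices (their old edges having been rerouted through the new interior vertices), and removing them --- via the move of Figure~\ref{shrink4}, which simply deletes a degree-two vertex without altering the surviving data --- leaves a graph combinatorially identical to the original but with $P_i$ replaced by $P_i'$ and $q_i$ replaced by $q_i'$. I would confirm that the induced labeling, after re-indexing, matches the original graph's labeling convention (indices growing by $k$ in the north-east direction and by $1$ in the north-west direction), so that the dynamics is genuinely a self-map of the configuration space of this fixed graph.

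The main obstacle I anticipate is purely bookkeeping: getting the index shifts and the north-east/north-west lattice directions to line up correctly so that the urban-renewal output $E=\langle A,B\rangle\cap\langle C_1,\dots,C_k\rangle$ coincides with $P_iP_{i+k}\cap P_{i+1}P_{i+k+1}$ on the nose, rather than with a shifted or transposed version. In the planar setting condition (V) is automatic (any four generic points or lines form a circuit), so there are no genuine geometric subtleties --- the content is entirely that the two intersection/join formulas built into urban renewal reproduce the two defining formulas for $T_k$. Once the local match is verified for one representative face of each orbit type, periodicity of the graph propagates it to all faces, completing the proof.
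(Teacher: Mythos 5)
Your proposal is correct and follows essentially the same route as the paper: the paper's proof is a one-line direct verification from the urban renewal rules, noting for instance that the new white vertex inside the tile $P_i q_i P_{i+k} q_{i-1}$ is $P_iP_{i+k}\cap P_{i+1}P_{i+k+1}=P_i'$, which is exactly the computation you spell out (together with the dual check for the lines and the subsequent degree-two removals). Your version is simply a more detailed write-up of the same bookkeeping.
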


\begin{proof}
 One directly checks it from the rules of urban renewal. For example, according to the urban renewal rules one of the new white vertices inside the tile $P_i q_i P_{i+k} q_{i-1}$ should indeed be the intersection of $P_i P_{i+k}$ with $P_{i+1}P_{i+k+1}$, i.e. $P_i'$.
\end{proof}

For a single pentagram dynamics on $P$ and for $k=2$ Proposition \ref{prop:pent} was observed in \cite[Example 4.4]{affolter2024vector}.

Below is our main dynamic theorem for pentagram maps, generalizing Theorem \ref{thm1}.

\begin{theorem}\label{thm:pent}
Let $T_k$ be the $k$-diagonal pentagram map, and $P$, $Q$ be planar polygons such that $Q$ is inscribed in $P$, and $T_k(Q)$ in $T_k(P)$. Then $T_k^2(Q)$ is also inscribed in $T_k^2(P)$, and thus $T_k^m(Q)$ is inscribed in $T_k^m(P)$ for any $m \geq 1$. 
\end{theorem}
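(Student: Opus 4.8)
The plan is to reduce the statement to the invariance of coherence under dimer moves, established in Theorem \ref{mainThm}. First I would package the hypothesis as a coherent double circuit configuration. By the Lemma preceding Proposition \ref{prop:pent}, the assumptions that $Q$ is inscribed in $P$ and that $T_k(Q)$ is inscribed in $T_k(P)$ are precisely equivalent to condition (F) for the bipartite torus graph $\Gamma$ of Figure \ref{fig:pentagram}, with the vertices of $P$ planted at white vertices and the sides of $Q$ planted at black vertices. Condition (V) holds automatically, since the configuration lives in $\P^2$ and generic quadruples of points or of lines form circuits. Hence the pair $(P,Q)$ determines a coherent double circuit configuration on $\Gamma$.

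Next I would run the dynamics. By Proposition \ref{prop:pent}, performing urban renewal at half of the tiles followed by removal of the resulting degree-two vertices transforms $P$ into $T_k(P)$ and $Q$ into $T_k(Q)$, while --- and this is the crucial combinatorial fact, read off from Figure \ref{fig:pentagram} --- it returns the graph to a copy of $\Gamma$ with its labels merely shifted. By Theorem \ref{mainThm} each such move preserves coherence of a double circuit configuration, so the output configuration is again coherent. Applying the Lemma a second time, now to the pair $(T_k(P), T_k(Q))$, condition (F) for the output is equivalent to the conjunction of two statements: that $T_k(Q)$ is inscribed in $T_k(P)$ (which we already know) and that $T_k^2(Q)$ is inscribed in $T_k^2(P)$. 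Since (F) is preserved, the second statement holds, which is exactly the first assertion of the theorem.

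Finally, the general claim follows by induction. Write $\mathcal{P}(j)$ for the statement ``$T_k^j(Q)$ is inscribed in $T_k^j(P)$''; the hypotheses are $\mathcal{P}(0)$ and $\mathcal{P}(1)$. The argument above, applied to the pair $(T_k^{j}(P), T_k^{j}(Q))$, shows that $\mathcal{P}(j)$ together with $\mathcal{P}(j+1)$ implies $\mathcal{P}(j+2)$: the two known inscribed conditions furnish a coherent configuration, and one round of dynamics advances it to a coherent configuration whose (F) condition yields $\mathcal{P}(j+2)$. Starting from $\mathcal{P}(0)$ and $\mathcal{P}(1)$, this two-step recursion gives $\mathcal{P}(m)$ for every $m \geq 1$, which is the full statement.

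The main obstacle I anticipate is not the coherence bookkeeping but verifying carefully that the selected sequence of moves genuinely restores the combinatorics of $\Gamma$ on the torus, up to a relabeling that is compatible with the point/line roles, so that the Lemma may legitimately be reapplied to the output; this is precisely what Figure \ref{fig:pentagram} and Proposition \ref{prop:pent} are intended to certify, and one should confirm that the label shift does not secretly interchange the two types of quadrilateral tiles. A secondary point requiring care is genericity: Theorem \ref{mainThm} and the circuit conditions are understood for generic input, so I would check that along the orbit the relevant intersections remain transverse and that no degenerate coincidences are forced (e.g. $A=B$ or $c=d$ in Proposition \ref{prop:coh}), proving the result first for generic $P$, $Q$ and then extending by continuity and Zariski density.
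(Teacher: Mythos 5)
Your argument is correct and follows exactly the paper's route: the paper's own proof is the one-line citation ``Follows from Theorem \ref{mainThm} and Proposition \ref{prop:pent},'' and your write-up simply unpacks that chain (Lemma encoding the inscribed conditions as (F), Proposition \ref{prop:pent} realizing $T_k$ by moves that restore the graph, Theorem \ref{mainThm} preserving coherence, then reapplying the Lemma and iterating). Your closing remarks about verifying the combinatorial restoration and genericity are reasonable caveats but do not change the substance.
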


\begin{proof}
 Follows from Theorem \ref{mainThm} and Proposition \ref{prop:pent}. 
\end{proof}

 \begin{figure}[t]
\centering
  \centering
\includegraphics[scale = 0.25]{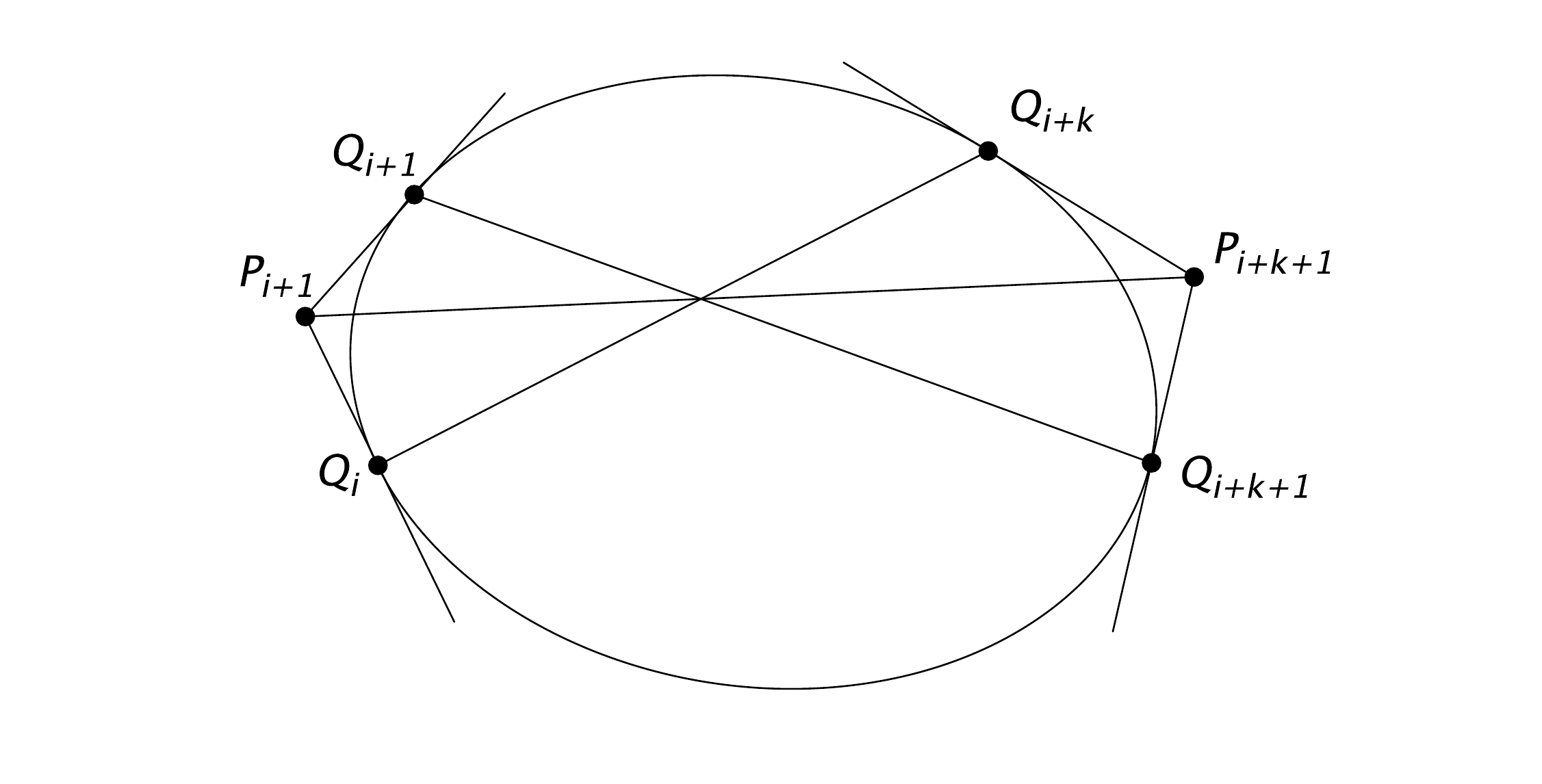}

\caption{If $P$ and $Q$ are dual, then $T_k(Q)$ is inscribed in $T_k(P)$.
}\label{fig:dual}
\end{figure}

The following result gives an example of polygons $P$, $Q$ which can serve as initial data for Theorem \ref{thm:pent}. Say that polygons $P$ and $Q$ are \emph{dual} to each other if $P$ is circumscribed about a conic $C$ and $Q_i$ is the point where $P_iP_{i+1}$ touches $C$ (equivalently, $P$ is circumscribed about a conic $C$, and $Q$ is polar dual to $P$ with respect to $C$). Clearly, if $P$ and $Q$ are dual, then $Q$ is inscribed in $P$.
\begin{proposition}\label{thm:ins}
Suppose $P$ and $Q$ are dual polygons. Then $T_k(Q)$ is inscribed in $T_k(P)$ for all $k\in \{2, \dots, n-2\}$, see Figure \ref{fig:dual}.\end{proposition}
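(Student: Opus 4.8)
The plan is to verify the inscribing condition index-by-index and to reduce each instance to a single classical incidence on the conic $C$. First I would record the sides of $T_k(P)$: since $P_i' = P_iP_{i+k}\cap P_{i+1}P_{i+k+1}$ and $P_{i+1}' = P_{i+1}P_{i+k+1}\cap P_{i+2}P_{i+k+2}$ both lie on the chord $P_{i+1}P_{i+k+1}$, the $i$-th side $P_i'P_{i+1}'$ of $T_k(P)$ lies on the line $P_{i+1}P_{i+k+1}$. On the other side, tracking $Q$ through its vertices $Q_i$ (the points where $C$ touches the sides of $P$), the dynamics is the usual pentagram map, so $Q_i' = Q_iQ_{i+k}\cap Q_{i+1}Q_{i+k+1}$. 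Hence the proposition reduces to the single statement that, for every $i$,
$$ Q_iQ_{i+k}\cap Q_{i+1}Q_{i+k+1}\ \in\ P_{i+1}P_{i+k+1}. $$

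Next I would translate the right-hand line into the intrinsic language of $C$ via polarity. The vertex $P_{i+1}$ is the intersection of the tangents to $C$ at $Q_i$ and $Q_{i+1}$, hence the pole of the chord $Q_iQ_{i+1}$; likewise $P_{i+k+1}$ is the pole of $Q_{i+k}Q_{i+k+1}$. Consequently the line $P_{i+1}P_{i+k+1}$ is the polar, with respect to $C$, of the point $Z := Q_iQ_{i+1}\cap Q_{i+k}Q_{i+k+1}$. The desired incidence thus becomes: the point $Q_i' = Q_iQ_{i+k}\cap Q_{i+1}Q_{i+k+1}$ lies on the polar of $Z$.

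Finally I would invoke the classical fact that the diagonal triangle of a quadrangle inscribed in a conic is self-polar with respect to that conic. Applied to the inscribed quadrangle $Q_iQ_{i+1}Q_{i+k}Q_{i+k+1}$, whose three diagonal points are $Q_i'$, $Z$, and $W := Q_iQ_{i+k+1}\cap Q_{i+1}Q_{i+k}$, this says precisely that $Z$ is the pole of the line $Q_i'W$; equivalently, the polar of $Z$ is the line $Q_i'W$, which visibly passes through $Q_i'$. This yields $Q_i'\in P_{i+1}P_{i+k+1}$, i.e.\ the $i$-th vertex of $T_k(Q)$ lies on the $i$-th side of $T_k(P)$, which is the asserted inscribing. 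Since the argument uses only the four tangency points $Q_i,Q_{i+1},Q_{i+k},Q_{i+k+1}$ and their tangents, it is uniform in $k$ and covers all $k\in\{2,\dots,n-2\}$ simultaneously.

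I expect the only genuine subtleties to be bookkeeping and degeneracy: pinning down exactly which chord carries the $i$-th side of $T_k(P)$ and matching it with the correct vertex of $T_k(Q)$, and a brief genericity remark to exclude degenerate coincidences (for instance a diagonal point landing on $C$, or tangents failing to meet). Once the configuration is arranged correctly, the entire weight of the proof rests on the single, well-known self-polarity of the diagonal triangle, so no computation is required.
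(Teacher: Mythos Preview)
Your argument is correct. The reduction you carry out---identifying the $i$-th side of $T_k(P)$ with the chord $P_{i+1}P_{i+k+1}$ and the $i$-th vertex of $T_k(Q)$ with $Q_iQ_{i+k}\cap Q_{i+1}Q_{i+k+1}$---is exactly the paper's, so the statement to be proved is the same concurrence:
\[
Q_iQ_{i+k},\ Q_{i+1}Q_{i+k+1},\ P_{i+1}P_{i+k+1}\ \text{are concurrent.}
\]

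Where you diverge is in the supporting lemma. The paper isolates this concurrence as the \emph{dual quadrilaterals theorem} (the diagonals of a circumscribed quadrilateral and of its contact quadrilateral all pass through one point), and proves that in turn by applying Brianchon to a degenerate hexagon. You instead pass to the pole--polar correspondence, identify $P_{i+1}P_{i+k+1}$ as the polar of the diagonal point $Z=Q_iQ_{i+1}\cap Q_{i+k}Q_{i+k+1}$, and invoke the self-polarity of the diagonal triangle of the inscribed quadrangle $Q_iQ_{i+1}Q_{i+k}Q_{i+k+1}$. These two classical facts are essentially equivalent (each implies the other via polarity), so the two proofs have the same depth; yours is a shade more intrinsic in that it never names the circumscribed quadrilateral or its remaining two vertices, while the paper's Brianchon argument is slightly more self-contained for a reader who does not have the self-polar diagonal triangle at hand. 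Either way, no computation is needed and the argument is uniform in $k$, exactly as you note.
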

The  proof is based on the following folklore statement:
  \begin{figure}[t]
\centering
  \centering
\includegraphics[scale = 0.3]{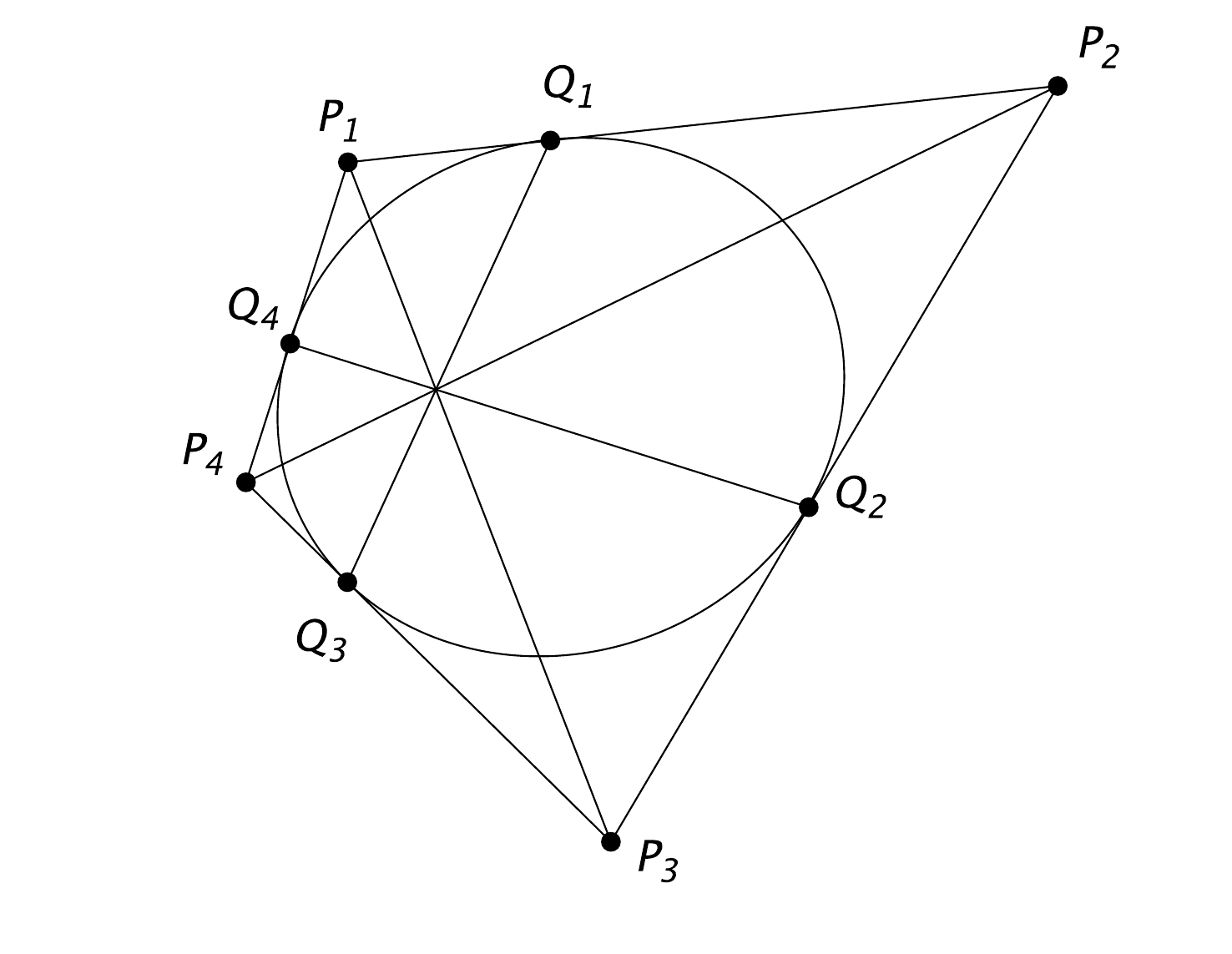}

\caption{Dual quadrilaterals theorem.
}\label{fig:quaddiag}
\end{figure}
\begin{theorem}[Dual quadrilaterals theorem, see e.g. \cite{bobenko2008}, Corollary 9.20]
Suppose $P$ and $Q$ are dual quadrilaterals. Then the diagonals of $P$ and $Q$ are concurrent, see Figure \ref{fig:quaddiag}.
\end{theorem}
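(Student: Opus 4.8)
The plan is to deduce the statement from Brianchon's theorem applied to two suitably degenerate circumscribed hexagons. Recall the setup: $P = P_1P_2P_3P_4$ is circumscribed about a conic $C$, and $Q_i$ is the point at which the side $P_iP_{i+1}$ touches $C$ (indices mod $4$), so that $Q = Q_1Q_2Q_3Q_4$ is the contact polygon. I must show that the four lines $P_1P_3$, $P_2P_4$ (the diagonals of $P$) and $Q_1Q_3$, $Q_2Q_4$ (the diagonals of $Q$) pass through a common point.

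First I would form the degenerate hexagon with vertices $P_1, Q_1, P_2, P_3, Q_3, P_4$ taken in this cyclic order. Each of its six sides lies on one of the four tangent lines $P_iP_{i+1}$: the sides $P_1Q_1$ and $Q_1P_2$ both lie on the tangent $P_1P_2$, the sides $P_3Q_3$ and $Q_3P_4$ both lie on the tangent $P_3P_4$, while $P_2P_3$ and $P_4P_1$ are the remaining two tangents. Hence this hexagon is circumscribed about $C$, and Brianchon's theorem applies. Its three main diagonals join opposite vertices and are $P_1P_3$, $Q_1Q_3$, $P_2P_4$, so Brianchon's theorem asserts that they concur, at a point I will call $X$.

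Next I would repeat the construction with the ``rotated'' hexagon $P_2, Q_2, P_3, P_4, Q_4, P_1$, whose sides again all lie on the four tangents, now with $Q_2$ splitting the side $P_2P_3$ and $Q_4$ splitting the side $P_4P_1$. Its main diagonals are $P_2P_4$, $Q_2Q_4$, $P_1P_3$, which therefore also concur. But the diagonals $P_1P_3$ and $P_2P_4$ already meet in the single point $X$, so the second concurrency point is forced to be $X$ as well; consequently $Q_2Q_4$ passes through $X$ too. Combining the two applications, all four diagonals pass through $X$, which is the desired conclusion.

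The only step requiring genuine insight is choosing the correct degeneration of Brianchon's hexagon so that the contact points $Q_i$ appear as alternate vertices and the quadrilateral's diagonals emerge as Brianchon diagonals; after that the argument is purely combinatorial bookkeeping. An alternative route, which I would record as a remark, is to invoke the polarity $\pi$ with respect to $C$: it sends each vertex $P_i$ to the side $Q_{i-1}Q_i$ of $Q$ (its chord of contact) and each contact point $Q_i$ to the tangent side $P_iP_{i+1}$ of $P$, so the configuration is self-dual and concurrency of the four diagonals becomes equivalent to collinearity of their four poles. Verifying that directly is possible but less transparent than the two-hexagon argument above, so I would use the polarity only as a conceptual explanation of the symmetry rather than as the main proof.
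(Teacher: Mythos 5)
Your proof is correct and is essentially identical to the paper's: both apply Brianchon's theorem to the degenerate circumscribed hexagon $P_1,Q_1,P_2,P_3,Q_3,P_4$ and then to its rotation involving $Q_2,Q_4$, concluding via the common pair of diagonals $P_1P_3$, $P_2P_4$. Your write-up merely spells out the second hexagon and the final concurrency argument in more detail than the paper does.
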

\begin{proof}
Suppose that the side $P_iP_{i+1}$ touches a conic $C$ at $Q_i$ (where indices are understood modulo $4$). Consider a degenerate hexagon with vertices $P_1, Q_1, P_2, P_3, Q_3, P_4$. All its sides are tangent to $C$, so, by Brianchon's theorem, its diagonals $P_1P_3, Q_1Q_3, P_2P_4$ are  concurrent. Analogously, $P_1P_3, Q_2Q_4, P_2P_4$  are also concurrent. The result follows. 
\end{proof}
\begin{proof}[{Proof of Theorem \ref{thm:ins}}]
Suppose that the side $P_iP_{i+1}$ touches a conic $C$ at $Q_i$. Consider a quadrilateral with sides $P_{i}P_{i+1}, P_{i+1}P_{i+2}, P_{i+k}P_{i+k+1}, P_{i+k+1}P_{i+k+2}$ which are tangent to $C$ at points $Q_i, Q_{i+1}, Q_{i+k}, Q_{i+k+1}$ respectively. Notice that $P_{i+1}, P_{i+k+1}$ are two opposite vertices of this quadrilateral. So, by the dual quadrilaterals theorem, the lines $Q_{i}Q_{i+k}, Q_{i+1}Q_{i+k+1}, P_{i+1}P_{i+k+1}$ are concurrent, as desired.  \end{proof}

\section{Example: spirals}\label{sec:spirals}

In \cite{schwartz2013} Schwartz defined the following object, see also \cite{beffa2015integrability}. Let $P$ be a bi-infinite polygonal path in ${\mathbb R}P^2$. We say that $P$ is a pentagram spiral if for some $m$ we have $T^m(P) = P$, where $T$ is the usual pentagram map, which can be applied to $P$ in a natural way. The minimal such number $m$ is called in \cite{schwartz2013} the {\it {order}} of $P$. 
Here we restrict our attention to the case of order $m=1$, however allow higher pentagram maps $T_k$ as part of the definition. Thus, for us a {\it {pentagram spiral}} is a bi-infinite polygonal path $P$ such that $T_k(P) = P$. Equivalently, it is a bi-infinite polygonal path $P$ such that $$P_{i+n} = P_i P_{i+k} \cap P_{i+n-1} P_{i+k-1}.$$ Denote $\pp_{k,n}$ the set of all such $P$. For $k=2$ those are Schwartz's pentagram spirals of type $(n-1, 1)$. The set $\pp_{k,n}$ is defined for any $k \geq 2$ and $n > k+2$ (for $n=k+1, k+2$ one can check that the sequence $P_i$ becomes periodic). 

\begin{remark}
As we shall soon see, $P_{i+n} = P_i P_{i+k} \cap P_{i+n-1} P_{i+k-1}$ is equivalent to $P_{i+n} = P_i P_{i+k} \cap P_{i-1} P_{i+k-1}$, which up to a slight adjustment is how the pentagram map is written in the previous section. This is because points $P_{i-1}, P_{i+n-1}, P_{i+k-1}$ are collinear.  
\end{remark}

One can also consider dual bi-infinite polygonal paths defined by lines (as opposed to points), and the following defining recurrence on them: $$q_{i+n} = \langle q_i \cap q_{i+n-k}, q_{i+k} \cap q_{i+k-1} \rangle.$$ As before, we can also keep the data for $Q$ in terms of points rather than lines. Indeed,
define vertices of $Q$ as $Q_i = q_i \cap q_{i-k}$. 

\begin{figure}[htbp]
    \centering
    \includegraphics[width=\textwidth]{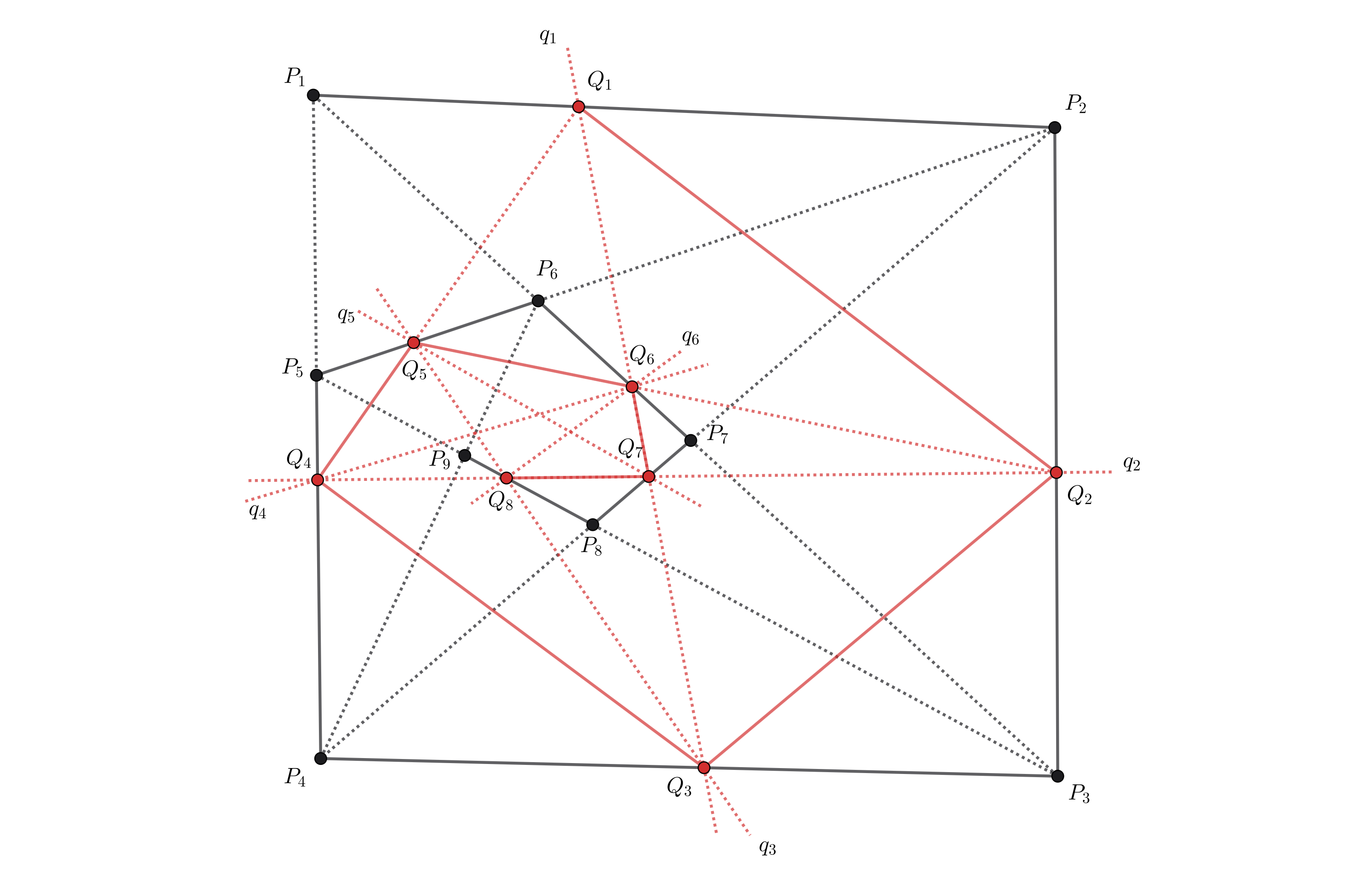} 
    \caption{A pair of pentagram spirals $P, Q \in \pp_{2,5}$, $Q$ is inscribed into $P$.}
    \label{fig:spiral}
\end{figure}

By definition, $Q_{i+n} = q_{i+n} \cap q_{i+n-k}$, and since $q_{i+n}$ (also by definition) passes through intersection of $q_i$ with $q_{i+n-k}$, we conclude that $q_i$ passes through $Q_{i+n}$. It is easy to see that $q_i=Q_{i} Q_{i+k}$. 
Relabelling the index this also tells us that $Q_{i+n-1} \in q_{i-1}$. On the other hand by definition of $Q_{i+k-1}$ we know $Q_{i+k-1} \in q_{i-1}$. Thus $Q_{i+n-1}Q_{i+k-1} = q_{i-1}$. We know that $q_{i} \cap q_{i-1} \in q_{i+n-k}$ by the definition of the latter, which means that $q_{i-1}$ passes through $q_{i} \cap q_{i+n-k}$, which we already know to be $Q_{i+n}$. We see that $Q_{i+n}$ lies on both $Q_{i} Q_{i+k}$ and $Q_{i+n-1}Q_{i+k-1}$, and thus $$Q_{i+n} = Q_i Q_{i+k} \cap Q_{i+n-1} Q_{i+k-1}$$ as desired. 

Note that as a byproduct of the above argument we conclude that $Q_{i+n} = q_i \cap q_{i-1}$. 

\begin{example}
Figure \ref{fig:spiral} shows two spirals $P,Q \in \pp_{2,5}$ where $P$ is given by its points, while $Q$ is given both by its lines and points. 
\end{example}

Instead of thinking of $P \in \pp_{k,n}$ as a bi-infinite polygonal path, we can generate it from a finite initial seed as follows. A {\it {seed}} of an element of $\pp_{k,n}$ is a window of $n+1$ points $[P_i, \ldots, P_{i+n}]$ subject to the following conditions: for any $0 \leq \ell \leq k-1$ points $P_{i+\ell}$, $P_{i+n-k+1+\ell}$, $P_{i+n-k+\ell}$ are collinear, and also $P_i, P_{i+k}, P_{i+n}$ are collinear. 

\begin{lemma}
 Given a seed of a pentagram spiral, one can compute further and previous points in it via recursions 
 $$P_{i+n+1} = P_{i+1} P_{i+k+1} \cap P_{i+n} P_{i+k} \text{   and   } P_{i-1} = P_{i+n-k-1} P_{i+n-k} \cap P_{i+n-1} P_{i+k-1}.$$
 After reindexing $i \mapsto i+1$ or $i \mapsto i-1$ one gets a new seed. 
\end{lemma}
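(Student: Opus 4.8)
The plan is to regard each of the two stated formulas as the \emph{definition} of the new point and then to verify that the resulting window of $n+1$ consecutive points again satisfies all $k+1$ collinearity conditions in the definition of a seed. The proof is pure index bookkeeping: after shifting the window by one step, most of its seed conditions coincide verbatim with seed conditions of the original window, and the few that do not are precisely the incidences built into the defining intersection. Throughout I work with generic configurations, so that two distinct lines always meet in a single point.

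For the forward step I set $P_{i+n+1}:=P_{i+1}P_{i+k+1}\cap P_{i+n}P_{i+k}$; the four points used lie in $[P_i,\dots,P_{i+n}]$ since $n\ge k+1$. I then compare the seed conditions of $[P_{i+1},\dots,P_{i+n+1}]$ with those of $[P_i,\dots,P_{i+n}]$. The collinearity condition of the new window for $0\le\ell\le k-2$ becomes, under $\ell\mapsto\ell+1$, the original condition for $1\le\ell\le k-1$, so these $k-1$ conditions are inherited. The two genuinely new requirements are that $P_{i+k},P_{i+n},P_{i+n+1}$ be collinear (the case $\ell=k-1$) and that $P_{i+1},P_{i+k+1},P_{i+n+1}$ be collinear (the original condition $P_i,P_{i+k},P_{i+n}$ shifted by one); but these say exactly that $P_{i+n+1}$ lies on $P_{i+n}P_{i+k}$ and on $P_{i+1}P_{i+k+1}$, which hold by construction. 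Hence the shifted window is a seed. I would also observe that this formula is nothing but the spiral recurrence $P_{j+n}=P_jP_{j+k}\cap P_{j+n-1}P_{j+k-1}$ evaluated at $j=i+1$, so iterating it genuinely produces the spiral.

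The backward step is dual. I define $P_{i-1}:=P_{i+n-k-1}P_{i+n-k}\cap P_{i+n-1}P_{i+k-1}$, so that by construction $P_{i-1}$ lies on $P_{i+n-k-1}P_{i+n-k}$ and on $P_{i+n-1}P_{i+k-1}$; these are exactly the two seed conditions of $[P_{i-1},\dots,P_{i+n-1}]$ that involve the new point, namely the collinearity condition at $\ell=0$ and the extra condition $P_{i-1},P_{i+k-1},P_{i+n-1}$. The remaining conditions match, under $\ell\mapsto\ell-1$, the original conditions at $0\le\ell\le k-2$, and so are inherited. To confirm that iterating backward really yields a spiral, I check the recurrence at $j=i-1$: the first incidence puts $P_{i+n-1}$ on $P_{i-1}P_{i+k-1}$, and the original condition at $\ell=k-2$ puts it on $P_{i+n-2}P_{i+k-2}$, whence $P_{i+n-1}=P_{i-1}P_{i+k-1}\cap P_{i+n-2}P_{i+k-2}$ as required. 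Finally, applying the backward formula to the window $[P_{i+1},\dots,P_{i+n+1}]$ recovers $P_i$ (using the original conditions $P_i,P_{i+k},P_{i+n}$ and $P_i,P_{i+n-k+1},P_{i+n-k}$), so the two recursions are mutually inverse and splice into a single consistent bi-infinite path. The only delicate point, and hence the main obstacle, is the index bookkeeping itself: correctly isolating, among the $k+1$ collinearities of a shifted window, the two enforced by the new intersection from the $k-1$ that are inherited.
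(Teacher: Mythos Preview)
Your proof is correct and follows essentially the same approach as the paper: in both directions you identify which of the $k+1$ seed conditions for the shifted window are inherited from the old window and which are forced by the defining intersection, and the bookkeeping matches. The only stylistic difference is in the backward step: the paper first argues from the spiral recurrence (applied at indices $i-k$ and $i-1$) that the point $P_{i-1}$ of an existing spiral must lie on both lines $P_{i+n-k-1}P_{i+n-k}$ and $P_{i+n-1}P_{i+k-1}$, thereby \emph{deriving} the formula, whereas you take the formula as a definition and verify the seed conditions directly; your framing is slightly more self-contained, and your added check of mutual inverseness is a nice touch the paper omits.
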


\begin{proof}
 If we are moving forward, it is clear that the recursion gives us the desired $P_{i+n+1}$ by the definition of pentagram spirals. The new shifted window indeed forms a seed since the only new conditions to check are collinearity of $P_{i+k}$, $P_{i+n+1}$, $P_{i+n}$, which is true directly from the recursion, and collinearity of  $P_{i+1}, P_{i+k+1}, P_{i+n+1}$, which is also true directly from the recursion.

 If we are moving backwards, from $P_{i+n-k} = P_{i-k} P_{i} \cap P_{i+n-1-k} P_{i-1}$ we see that that $P_{i-1}, P_{i+n-1-k}, P_{i+n-k}$ are collinear, and from $P_{i+n-1} = P_{i-1} P_{i+k-1} \cap P_{i+n-2} P_{i+k-2}$ we see that $P_{i-1}, P_{i+k-1}, P_{i+n-1}$ are collinear. Thus the backwards recursion indeed produces $P_{i-1}$. To know that we got a new seed we need to check that $P_{i-1}, P_{i+n-1-k}, P_{i+n-k}$ are collinear, which we already know, and also that $P_{i-1}, P_{i+k-1}, P_{i+n-1}$ are collinear, which we also already know. 
\end{proof}

\begin{remark}
In fact, to recover the whole spiral it is sufficient to start with a shorter initial seed, $[P_i, \ldots, P_{i+n-1}]$. We define slightly longer seeds to align better with what follows. 
\end{remark}

\begin{example}
Spiral $P$ in Figure \ref{fig:spiral}  is determined by the initial seed $[P_1, \ldots, P_6]$ satisfying additional $k+1=3$ collinearity conditions: $P_1, P_4, P_5$ are collinear, $P_2, P_5, P_6$ are collinear, $P_1, P_3, P_6$ are collinear. All vertices of the spiral can be recovered from this seed, for example $P_7 = P_3P_6 \cap P_2P_4$, etc. 
\end{example}

Similarly, in terms of lines a pentagram spiral is determined by a {\it {seed}} which is a window $[q_i, \ldots, q_{i+n}]$ subject to the condition: for any $0 \leq \ell \leq k-1$ lines $q_{i+\ell}$, $q_{i+\ell+1}$, $q_{i+n-k+\ell+1}$ are collinear, and also $q_{i}$, $q_{i+n-k}$, $q_{i+n}$ are collinear. The recursions allowing shifting the seed index and thus computing the whole spiral out of the seed are 
$$q_{i+n+1} = \langle q_{i+1} \cap q_{i+n-k+1}, q_{i+k+1} \cap q_{i+k} \rangle \text{   and   } q_{i-1} = \langle q_i \cap q_{i+n-k}, q_{i+n-1} \cap q_{i+n-k-1} \rangle.$$
The proof is similar and is omitted for brevity. 

\begin{figure}[t]
    \centering
    \includegraphics[width=0.65\textwidth]{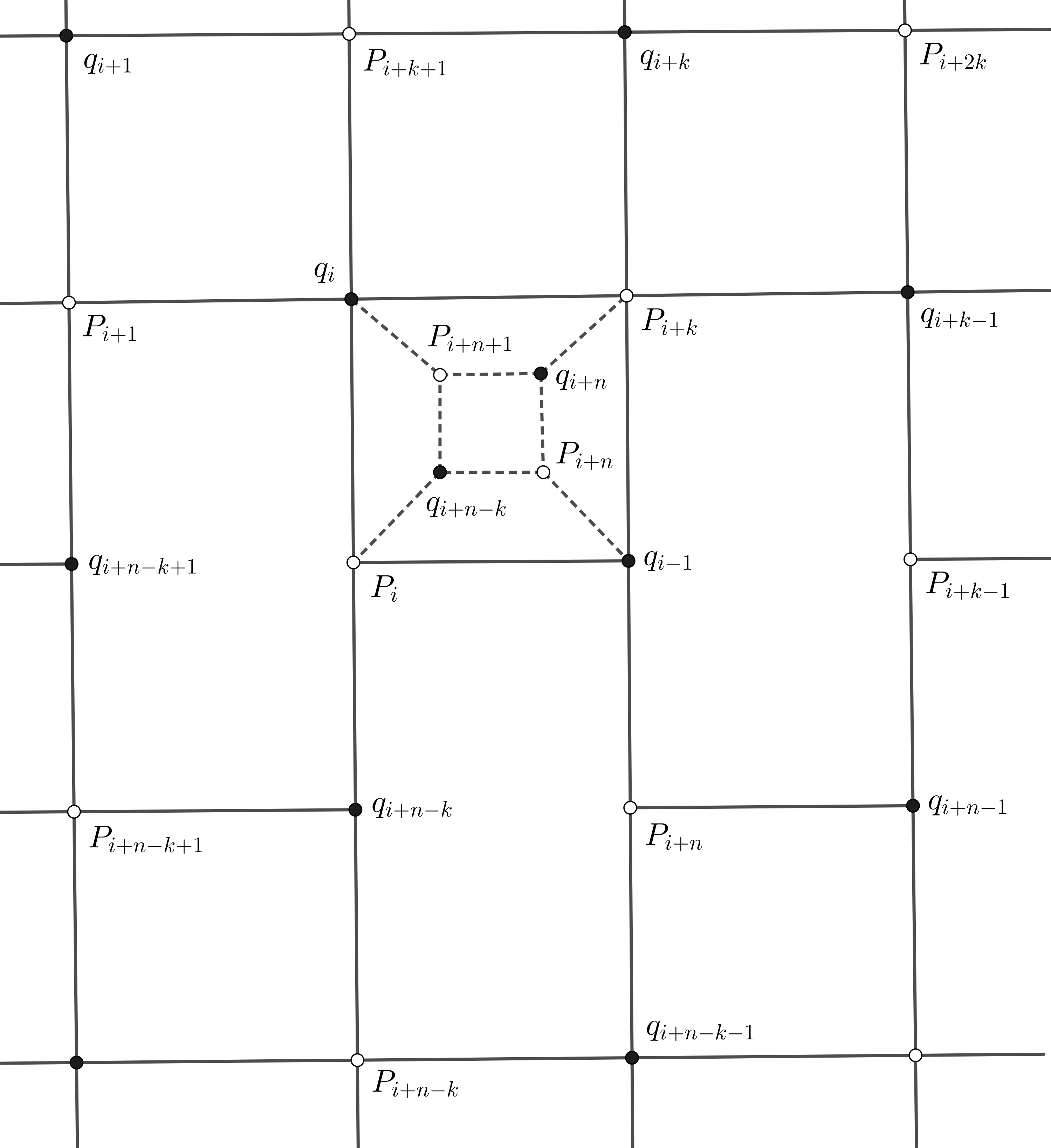} 
    \caption{A fragment of the toric bipartite graph for spirals in $\pp_{k,n}$.}
    \label{fig:spiralgraph}
\end{figure}

Now consider two pentagram spirals: $P, Q \in \pp_{k,n}$ given by their seeds $[P_i, \ldots, P_{i+n}]$ and $[q_{i-1}, \ldots,  q_{i+n-1}]$ respectively. Consider a bipartite graph on a torus determined as follows:
\begin{itemize}
  \item we start with the same graph as in Figure \ref{fig:pentagram} labelled with elements of the two seeds so that indices grow by $k$ in the north-east direction and by $1$ in the north-west  direction, modulo $n+1$;
  \item we remove edge $q_j P_{j+k}$ from the graph for each $j = n+i-k, \ldots, n+i-1, i-1$. 
\end{itemize}
A local part of such a graph is shown in Figure \ref{fig:spiralgraph} (as in the previous section, dashed edges along with vertices labeled $P_{i+n}, P_{i+n+1}, q_{i+n}, q_{i+n-k}$ only arise when we consider dynamics and for now can be ignored).

\begin{proposition}
The conditions (V) and (F) for this bipartite graph are equivalent to the following collection of conditions:
\begin{enumerate}
  \item the seeds $[P_i, \ldots, P_{i+n}]$ and $[q_{i-1}, \ldots,  q_{i+n-1}]$ satisfy all the necessary conditions in the definition of a seed;
  \item point $Q_j$ of the second pentagram spiral lies on $P_j P_{j+1}$ for $i \leq j \leq i+2n-k$. 
\end{enumerate}
\end{proposition}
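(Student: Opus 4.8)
The plan is to read conditions (V) and (F) off the tiling of Figure~\ref{fig:spiralgraph} face by face and vertex by vertex, keeping track of how the $k+1$ deleted edges $q_jP_{j+k}$ change the local picture. Deleting these edges lowers the degree of $k+1$ white vertices and $k+1$ black vertices from $4$ to $3$, and merges each destroyed pair of quadrilaterals into a hexagon; an Euler-characteristic count on the torus shows that the resulting tiling has $n-k$ surviving quadrilaterals of each of the two types $q_{l-k}P_{l+1}q_lP_l$ and $P_lq_lP_{l+k}q_{l-1}$, together with $k+1$ hexagons, for a total of $2n-k+1$ faces.

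\emph{Condition (V).} At a degree-$4$ vertex the circuit condition is automatic, since any four generic points or lines in the plane form a circuit. At a degree-$3$ white vertex $P_m$ the three surviving neighbors are the lines $q_{m-k-1},q_{m-1},q_m$, so (V) requires these to be concurrent; letting $m$ run over the affected white vertices, these are exactly the $k+1$ concurrency conditions in the definition of the seed $[q_{i-1},\dots,q_{i+n-1}]$ (after the index identifications modulo $n+1$). Symmetrically, at a degree-$3$ black vertex $q_m$ the surviving neighbors are the points $P_m,P_{m+1},P_{m+k+1}$, and (V) requires their collinearity, reproducing the $k+1$ seed collinearities of $[P_i,\dots,P_{i+n}]$. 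Hence (V) is equivalent to condition~(1), and from now on I assume (1), so that $P$ and $Q$ are honest spirals generated by their seeds and all the spiral relations of Section~\ref{sec:spirals} are available.

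\emph{Condition (F), quadrilaterals.} For a surviving quadrilateral of the first type, Proposition~\ref{prop:coh} gives coherence $\iff$ the line $P_lP_{l+1}$ passes through $q_{l-k}\cap q_l=Q_l$, i.e. $Q_l\in P_lP_{l+1}$. For a surviving quadrilateral of the second type, Proposition~\ref{prop:coh} gives coherence $\iff Q_{l+n}\in P_lP_{l+k}$, where $Q_{l+n}=q_l\cap q_{l-1}$ is the byproduct identity of Section~\ref{sec:spirals}. The decisive observation is that this diagonal condition is again a \emph{side} inscription: the spiral relations force $P_l,P_{l+k},P_{l+n}$ to be collinear and $P_{l+n+1}=P_{l+1}P_{l+k+1}\cap P_{l+n}P_{l+k}$ to lie on the same line, so the diagonal $P_lP_{l+k}$ coincides with the side $P_{l+n}P_{l+n+1}$; thus second-type coherence is equivalent to $Q_{l+n}\in P_{l+n}P_{l+n+1}$. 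Each surviving quadrilateral is therefore equivalent to a single inscription $Q_j\in P_jP_{j+1}$, the two families together covering a block of $2(n-k)$ consecutive indices.

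\emph{Condition (F), hexagons, and both directions.} Each hexagon $H_j$ is glued along one edge from the two destroyed quadrilaterals --- of the second type at index $j$ and of the first type at index $j+k$ --- so, by multiplicativity of the multi-ratio under gluing (the shared white/black pair cancels; this is the mechanism behind Theorem~\ref{mthm} and the proof of Theorem~\ref{mainThm}), the multi-ratio of $H_j$ is the product of theirs. The implication (1)+(2)$\Rightarrow$(V)+(F) is then immediate, since (2) makes every quadrilateral multi-ratio equal $1$, hence also every hexagon multi-ratio. Conversely, under (V)+(F) the surviving quadrilaterals force inscriptions on their block of $2(n-k)$ indices, and for each hexagon one of its two inscription indices $j+n$, $j+k$ lies in this block, so the corresponding factor is $1$ and coherence of $H_j$ determines the remaining inscription; as $j$ ranges over the $k+1$ hexagons these fill in exactly the missing indices, giving $Q_j\in P_jP_{j+1}$ for all $i\le j\le i+2n-k$. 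The step I expect to be hardest is the index bookkeeping here: computing the exact ranges of the two quadrilateral families and of the hexagons, checking that each hexagon has precisely one inscription index already covered (so that a single equation determines a single unknown), and tracking the monodromy along the seam of the torus, where the labels jump by $n+1$ and which is exactly what stretches the window to length $2n-k+1$ rather than $n+1$. That the face count $2n-k+1$ equals the number of inscriptions in (2) --- with no global relation imposed by Theorem~\ref{mthm}, since the spiral assignment does not descend to the torus --- is the consistency check that the matching is exact.
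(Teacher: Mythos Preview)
Your overall strategy is correct and runs parallel to the paper's proof: condition (V) is handled identically, and for the quadrilateral faces you use Proposition~\ref{prop:coh} together with the spiral identity $P_lP_{l+k}=P_{l+n}P_{l+n+1}$ in exactly the same way. The genuine difference is in the hexagon step. The paper invokes the Desargues characterization (Proposition~\ref{prop:des}): it reads the hexagon $P_j,q_j,P_{j+k+1},q_{j+k},P_{j+k},q_{j-1}$ as a pair of triangles, observes that the already--established inscription $Q_{j+k}\in P_{j+k}P_{j+k+1}$ forces a degeneracy (the vertex $C_2=Q_{j+k}$ lies on the side $B_1C_1$), and then central perspectivity collapses to the single collinearity $P_j,P_{j+k},Q_{j+n}$. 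Your route---factoring the hexagon multi-ratio as the product of the two ``virtual'' quadrilateral multi-ratios along the deleted edge $q_jP_{j+k}$, so that one factor being $1$ forces the other---reaches the same implication without appealing to Desargues. It is a clean algebraic substitute for the paper's geometric picture, and arguably more transparent for the bookkeeping you flag as the hard part.

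Two points to fix. First, your closing remark that ``no global relation is imposed by Theorem~\ref{mthm}, since the spiral assignment does not descend to the torus'' is not right: the tiling \emph{is} on the torus, so the master theorem does apply and one of the $2n-k+1$ coherence conditions is redundant. This does not damage the equivalence---there is a matching redundancy among the inscriptions once (1) holds, which is exactly the mechanism behind Theorem~\ref{thm:pentspiral}---but your stated consistency check is the wrong one. Second, you acknowledge but do not carry out the index bookkeeping. The paper does this explicitly: the surviving first-type quadrilaterals give $Q_j\in P_jP_{j+1}$ for $i+k\le j\le i+n-1$, the second-type for $i+n\le j\le i+2n-k-1$, and the $k+1$ hexagons then fill in $i\le j\le i+k-1$ together with $j=i+2n-k$. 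Without these ranges, your assertion that each hexagon has exactly one of its two inscription indices already covered by a quadrilateral remains unverified, and that is precisely where the argument could silently fail.
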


\begin{proof}
Each missing edge $q_j P_{j+k}$ creates two vertices of degree three. The condition (V) for each such vertex labelled by $q_j$ means that $P_{j+1}$, $P_j$, $P_{j+k+1}$ are collinear, where the indices are taken modulo $n+1$.  As $j = n+i-k, \ldots, n+i-1, i-1$, this is equivalent to collinearity of points $P_{i+\ell}$, $P_{i+n-k+1+\ell}$, $P_{i+n-k+\ell}$ for $0 \leq \ell \leq k-1$, and also of $P_i, P_{i+k}, P_{i+n}$. The case of vertex labelled by $P_{j+k}$ is similar. 

Now let us consider the types of faces the graph has and what condition (F) means for each of them. The square faces with vertices $P_j, q_j, P_{j+k}, q_{j-1}$ tell us that $Q_{j+n} = q_j \cap q_{j-1}$ lies on $P_j P_{j+k} = P_{j+n} P_{j+n+1}$. Thus we see that $Q_j$ lies on $P_j P_{j+1}$ for $i+n \leq j \leq i+2n-k-1$. Similarly square faces with vertices $q_j, P_{j+k+1}, q_{j+k}, P_{j+k}$ tell us that $Q_{j+k} = q_j \cap q_{j+k}$ lies on $P_{j+k} P_{j+k+1}$, which means the desired condition of $Q_j$ lying on $P_j P_{j+1}$ for $i+k \leq j \leq i+n-1$. Finally, hexagons with vertices $P_j, q_j, P_{j+k+1}, q_{j+k}, P_{j+k}, q_{j-1}$ can be interpreted using Proposition \ref{prop:des}. As $q_j \cap q_{j+k} = Q_{j+k}$, and we already know that $Q_{j+k} \in P_{j+k} P_{j+k+1}$, the Desargues condition implies that $P_j$, $P_{j+k}$, and $Q_{j+n} = q_j \cap q_{j-1}$ lie on one line. It remains to note that $P_j P_{j+k} = P_{j+n} P_{j+n+1}$, and we get the desired condition of $Q_j$ lying on $P_j P_{j+1}$ for $i \leq j \leq i+k-1$ and $j = i + 2n-k$.
\end{proof}

The time evolution is going to consist of a single urban renewal at the square face with vertices $P_i, q_i, P_{i+k}, q_{i-1}$. 

\begin{lemma} \label{lem:spir}
 The urban renewal at the face with vertices $P_i, q_i, P_{i+k}, q_{i-1}$ transforms the whole bipartite graph into an identical graph but with index shift $i \mapsto i+1$.
\end{lemma}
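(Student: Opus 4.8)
The plan is to argue that the single urban renewal at the face $P_i, q_i, P_{i+k}, q_{i-1}$, together with the degree-two vertex removals/additions that accompany it, reproduces the original toric graph verbatim, up to relabelling every vertex index by $i \mapsto i+1$. Since the graph is combinatorially the torus graph of Figure~\ref{fig:pentagram} with the prescribed edges $q_j P_{j+k}$ deleted, I would separate the claim into two parts: (i) the \emph{combinatorial} part, that applying an urban renewal at one square face and then contracting the resulting degree-two vertices yields a graph isomorphic to the original abstract toric graph; and (ii) the \emph{labelling} part, that under the natural identification the new vertices and edges carry precisely the labels of the old ones with indices advanced by one.

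First I would set up the local picture. Urban renewal at the square $P_i, q_i, P_{i+k}, q_{i-1}$ replaces that face by a small inner square, creating four new vertices (in the notation of Figure~\ref{urban3}, the new white vertices $E,F$ and new black vertices $g,h$), and it splits each of the four surrounding vertices $P_i, q_i, P_{i+k}, q_{i-1}$ off into new adjacencies while leaving their other edges intact. I would then observe that the four corner vertices of the renewed square now each have degree two (they retain one edge into the inner square and one edge to the rest of the graph), so the degree-two removal of Figure~\ref{shrink4} applies to them. Carrying out these four removals collapses the inner square back to a single square face but with the roles of white/black and the surrounding connectivity shifted. The key geometric input, already available from the urban-renewal rules as checked in Proposition~\ref{prop:pent}, is that the new white vertex produced inside such a face is exactly $P_i' = P_i P_{i+k} \cap P_{i+1} P_{i+k+1}$, i.e. the next pentagram iterate; dually, the new black vertex is the line $q$ given by the $T_k$-recursion. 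This is what pins the labels: the vertex that was $P_i$ in the old graph is, after the move, occupying the slot that in the shifted graph is labelled $P_{i+1}$, and the newly created point carries the label consistent with $P_{i+n+1}$ (the dashed vertices in Figure~\ref{fig:spiralgraph}).

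The crux is bookkeeping the deleted edges. Because the graph for $\pp_{k,n}$ is obtained from the full pentagram graph by deleting edges $q_j P_{j+k}$ for $j$ in the window $n+i-k,\dots,n+i-1,i-1$, I must check that after the renewal-plus-removal the set of missing edges is exactly the window shifted by one, namely $j = n+i-k+1,\dots,n+i,i$. I would track how the move at the face indexed $i$ consumes the edge slot associated with $j=i-1$ at the ``tail'' of the window and reinstates/creates the edge slot at the ``head'' $j = n+i$, so that the deleted-edge window rotates by one step. This matches the dashed edges and the primed/advanced labels drawn in Figure~\ref{fig:spiralgraph}, so I would phrase the verification as: the local modification near the moved face affects precisely one edge of the deleted-edge pattern, and it does so in the manner that advances the window, while every edge outside a bounded neighbourhood of the face is untouched and hence already agrees with the $i\mapsto i+1$ relabelling.

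I expect the main obstacle to be purely combinatorial/notational rather than geometric: making the index arithmetic modulo $n+1$ line up so that the ``north-east by $k$, north-west by $1$'' labelling convention of the torus graph is genuinely invariant under the composite move, and in particular confirming that the four degree-two removals are legitimate (i.e. that no removal creates a black vertex of degree exceeding $d+2$, which here is automatic since $d=2$ and all relevant vertices stay degree three or less in the planar configuration, as noted after the circuit-configuration moves). Once the local diagram is drawn carefully and the four corner vertices are identified with their shifted images, the isomorphism of graphs is immediate and the labelling follows from the explicit urban-renewal formulas $E = \langle P_i, P_{i+k}\rangle \cap \dots$ and the dual formula for the new line, already recorded in Figure~\ref{urban3}. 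I would therefore present the argument as a direct inspection of Figures~\ref{urban3} and~\ref{fig:spiralgraph}, reducing the proof to verifying that one edge-slot of the missing-edge window advances by one while all other local data realises the shift $i \mapsto i+1$.
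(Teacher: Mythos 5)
Your overall strategy -- perform the urban renewal, follow with degree-two removals, and track both the labels and the rotation of the missing-edge window -- is the same as the paper's, but the mechanism you describe contains a concrete error. It is not true that all four corner vertices of the renewed face end up with degree two. In the spiral graph the edges $q_{i-1}P_{i+k-1}$ and $q_{i-k}P_{i}$ are among the deleted ones, so before the move $P_i$ and $q_{i-1}$ have degree three, while $q_i$ and $P_{i+k}$ have degree four. After the urban renewal each corner loses its two face edges and gains one edge into the inner square, so only $P_i$ and $q_{i-1}$ drop to degree two; $q_i$ and $P_{i+k}$ end at degree three and must \emph{survive} into the shifted graph (they still belong to the new seed windows $[P_{i+1},\dots,P_{i+n+1}]$ and $[q_i,\dots,q_{i+n}]$). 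Your picture of ``four removals collapsing the inner square back to a single square face'' therefore does not describe what happens, and your remark that the old $P_i$ ``occupies the slot labelled $P_{i+1}$'' is also off: the labels are actual points of the spiral, and $P_i$ is simply eliminated.

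The second, related gap is that you never compute the four new vertices, and that computation is the heart of the proof. The paper shows that two of them are genuinely new, $E = P_iP_{i+k}\cap P_{i+1}P_{i+k+1} = P_{i+n+1}$ and $h = q_{i+n}$, while the other two coincide with labels already present: $F = P_{i+n}$ and $g = q_{i+n-k}$. This coincidence is exactly what licenses the two degree-two removals in the labelled sense -- in Figure~\ref{shrink4} the move requires both neighbours of the degree-two vertex to carry the \emph{same} label, and indeed $P_i$ sits between the old $q_{i+n-k}=q_{i-k-1}$ and the new $g$, while $q_{i-1}$ sits between the old $P_{i+n}=P_{i-1}$ and the new $F$. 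Removing $P_i$ and $q_{i-1}$ merges these pairs, and the net effect is that $P_i, q_{i-1}$ leave the seed windows while $P_{i+n+1}, q_{i+n}$ enter, with the missing-edge window advancing by one exactly as you describe in your last paragraph. Without identifying $F$ and $g$ with existing labels, the degree-two removals you invoke are not available, so this step needs to be supplied before the argument closes.
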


\begin{proof}
 The four new vertices created by the urban renewal correspond to points $P_i P_{i+k} \cap P_{i+1} P_{i+k+1} = P_{i+n+1}$ and $P_i P_{i+k} \cap P_{i+n} = P_{i+n}$, and lines $\langle q_i\cap q_{i-1} \cap q_{i+k} \cap q_{i+k-1} \rangle = Q_{i+n} Q_{i+n+k} = q_{i+n}$ and $\langle q_{i} \cap q_{i-1}, q_{i+n-k}\rangle = q_{i+n-k}$. Out of those $P_{i+n}$ and $q_{i+n-k}$ are already present in the graph, and one uses degree two vertex removal move for them, getting rid of vertices with labels $P_i$ and $q_{i-1}$. The overall effect is exactly of shifting the window of seeds by one, so that $P_i$ and $q_{i-1}$ are lost, while $P_{i+n+1}$ and $q_{i+n}$ are gained. 
 
 One can also check that the shape of the graph remains correct, once the removal of degree two vertices mentioned above is applied. In particular, one can easily see from the figure of urban renewal that edges $q_i P_{i+k}$ and $q_{i+n} P_{i+k-1}$ are missing from the new graph, while the edge $q_{i+n-k} P_{i+n}$ is not missing anymore. This achieves the desired shift $i \mapsto i+1$ in all the conditions describing the initial toric bipartite graph. 
\end{proof}

Thus, performing uban renewals produces further and further elements of pentagram spirals $P$ and $Q$. We are ready to state our main dynamic theorem for pentagram spirals: the condition of one spiral being inscribed into the other eventually starts propagating itself. 

\begin{theorem}\label{thm:pentspiral}
Let $P, Q \in \pp_{k,n}$ be a pair of pentagram spirals. If the condition $Q_j \in P_j P_{j+1}$ holds for $2n+1-k$ consecutive values of $j$, then it holds for all values of $j$. 
\end{theorem}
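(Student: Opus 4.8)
The plan is to realize this as an instance of Theorem \ref{mainThm} (moves preserve coherence) applied to the toric bipartite graph constructed just above for spirals in $\pp_{k,n}$. The key structural fact is Lemma \ref{lem:spir}: a single urban renewal at the face with vertices $P_i, q_i, P_{i+k}, q_{i-1}$, followed by the attendant degree-two vertex removals, reproduces the \emph{same} toric graph with all indices shifted by $i \mapsto i+1$. So the strategy is to set up the incidence data as a coherent double circuit configuration on this graph, apply one urban renewal, and read off that the shifted data must satisfy the \emph{same} conditions (V) and (F).

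First I would translate the hypothesis into the language of the preceding Proposition. By that Proposition, conditions (V) and (F) for this graph are equivalent to the two seeds satisfying the seed collinearity conditions, together with $Q_j \in P_j P_{j+1}$ holding for the block $i \leq j \leq i+2n-k$ --- which is exactly $2n+1-k$ consecutive values of $j$. Thus the hypothesis ``$Q_j \in P_j P_{j+1}$ for $2n+1-k$ consecutive $j$'' lets me place the window so that these are precisely the values $i, i+1, \dots, i+2n-k$, giving a genuine coherent double circuit configuration on the graph. Here I should note that the seed collinearity conditions are automatically available: since $P$ and $Q$ are honestly in $\pp_{k,n}$, they satisfy their defining recurrences, and the seed conditions are a consequence of lying on the spirals. (Condition (V) in the plane is automatic for generic data, as observed in the pentagram-map section, but here the missing edges force the genuine collinearities, which hold because $P, Q$ are spirals.)

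Next, performing the urban renewal at the face $P_i, q_i, P_{i+k}, q_{i-1}$: by Theorem \ref{mainThm}, the move preserves coherence of double circuit configurations, and it also preserves condition (V) (the circuit conditions) as discussed in Section \ref{sec:crc}. By Lemma \ref{lem:spir} the resulting graph is identical to the original with indices shifted by one, and the new vertices introduced are exactly the next spiral elements $P_{i+n+1}$ and $q_{i+n}$ (the other two new vertices coincide with already-present $P_{i+n}$ and $q_{i+n-k}$ and are removed). Applying the Proposition again, now to the shifted graph, tells us that $Q_j \in P_j P_{j+1}$ holds for the shifted block $i+1 \leq j \leq i+1+2n-k$. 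In particular the single new value $j = i+2n-k+1$ is now included. Iterating the urban renewal in both the forward and backward directions (the backward move being the inverse, equally a valid sequence of moves, cf.\ the backward recursion in the seed lemma) propagates the inscribed condition one index at a time in each direction, so it holds for all $j \in \Z$.

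The main obstacle I anticipate is genericity and the consistency of the index bookkeeping rather than any deep geometry. One must verify that the window of $2n+1-k$ consecutive inscribed conditions really matches the block produced by the Proposition under the chosen labelling, and that Lemma \ref{lem:spir}'s shift lines up the newly-gained condition at exactly the next index without gaps or overlaps --- this is the crux and deserves a careful sentence tracking which face yields $Q_{i+2n-k+1} \in P_{i+2n-k+1}P_{i+2n-k+2}$ after the renewal. A secondary point is that the moves require the configuration to be generic enough that no degeneracies (coinciding points, ill-defined intersections in the urban renewal of Figure \ref{urban3}) occur; since this is a statement about the dynamics of honest spirals, I would note that such degeneracies are avoided for generic initial data and the conclusion extends by continuity/closure of the incidence conditions, exactly as in the proof of Theorem \ref{thm:pent}.
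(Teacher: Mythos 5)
Your proposal is correct and follows exactly the paper's route: the paper's own proof is the one-line ``Follows from Theorem \ref{mainThm} and Lemma \ref{lem:spir},'' and your write-up simply fills in the details of that combination (translating the hypothesis via the preceding Proposition, applying one urban renewal to shift the index window by one, and iterating in both directions). The index bookkeeping you flag as the crux checks out: the Proposition's block $i \leq j \leq i+2n-k$ is precisely $2n+1-k$ consecutive values, and the shift $i \mapsto i+1$ from Lemma \ref{lem:spir} adds exactly one new index per move.
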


\begin{proof}
 Follows from Theorem \ref{mainThm} and Lemma \ref{lem:spir}. 
\end{proof}

\begin{example}
In the example in Figure \ref{fig:spiral}, it is enough for us to know that $Q_j \in P_j P_{j+1}$ for $1 \leq j \leq 9$, to conclude that this property holds for any $j$.
\end{example}

\section{Example: $Q$-nets}

The following definition is an adaptation of \cite[Definition 2.1]{bobenko2008}. Denote by $\mathbb Z^2_{e}$ the set $\{({i,j}) \in \mathbb Z^2 \mid i+j  \text{ is even}\}$ and by $\mathbb Z^2_{o}$ the set $\{({i,j}) \in \mathbb Z^2 \mid i+j  \text{ is odd}\}$. A map $f: \mathbb Z^2_e \rightarrow \mathbb R^3, ({i,j}) \mapsto f_{i,j}$ is called a {\it {discrete conjugate net}}, or {\it {$Q$-net}} if for any $({i,j}) \in \mathbb Z^2_o$ the four points $f_{i \pm 1, j}, f_{i, j \pm 1}$ are coplanar. Similarly a map $f: \mathbb Z^2_o \rightarrow \mathbb R^3$ is called a {\it {discrete conjugate net}}, or {\it {$Q$-net}} if for any $({i,j}) \in \mathbb Z^2_e$ the four points $f_{i \pm 1, j}, f_{i, j \pm 1}$ are coplanar. 

\begin{remark}
 In \cite{bobenko2008} a more general notion of $Q$-nets is considered for maps $\mathbb Z^m \rightarrow \mathbb R^N$ for any $m$ and $N$. Soon we will need the case $m=N=3$ to state our main theorem. The definition however remains the same: image of any {\it {fundamental quadrilateral}} of $\mathbb Z^m$ has to be coplanar. In our definition we take advantage of the fact that $\mathbb Z^2_{e}$ and $\mathbb Z^2_{o}$ are themselves isomorphic to $\mathbb Z^2$. 
\end{remark}

While the origins of $Q$-nets can be traced to 1930s \cite{sauer1931}, their modern definition is due to Doliwa and Santini \cite{doliwa1997, doliwa2000}.


We define time evolution using {\it {Laplace transform}} of $Q$-nets, see \cite[Exercise 2.13]{bobenko2008}. Namely, let 
$$f'_{i,j} = \langle f_{i-1,j}, f_{i,j-1} \rangle \cap  \langle f_{i+1,j}, f_{i,j+1} \rangle.$$ We make this definition for each $({i,j})$ {\it {not}} in the domain of $f$. This way as time evolution proceeds we keep switching between the $Q$-nets defined on $\mathbb Z^2_e$ and on $\mathbb Z^2_o$. Laplace transform is well-defined since according to the definition of $Q$-nets we are taking two lines in the same projective plane, thus they intersect. It is known that Laplace transform of a $Q$-net is a $Q$-net, see before-mentioned \cite[Exercise 2.13]{bobenko2008}.

One can also consider a dual object called $Q^*$-net, given by a map $$G: \mathbb Z^2_e \rightarrow \text {planes in }\mathbb R^3 \text{ or } G: \mathbb Z^2_o \rightarrow \text {planes in }\mathbb R^3,$$ subject to the condition that the four planes $g_{i \pm 1, j}, g_{i, j \pm 1}$ intersect at one point for each appropriate $({i,j})$. As it is already noticed in \cite{bobenko2008}, $Q$-nets and $Q^*$-nets are essentially the same object, as one can transition between the two ways to keep the data as follows. Given $G: \mathbb Z^2_e \rightarrow \text {planes in }\mathbb R^3$, define $g: \mathbb Z^2_o \rightarrow \mathbb R^3$ via 
$$g_{i,j} = G_{i-1,j} \cap G_{i,j-1} \cap G_{i+1,j} \cap G_{i,j+1}.$$ 
This is well-defined according to the definition of $Q^*$-nets. Furthermore, it is a $Q$-net, as $g_{i,j}, g_{i+2,j}, g_{i+1,j+1}, g_{i+1,j-1}$ all lie on $G_{i+1,j}$. 

We define time evolution on $Q^*$-nets via dual Laplace transform 
$$G'_{i,j} = \langle G_{i-1,j} \cap G_{i,j+1},  G_{i+1,j} \cap G_{i,j-1} \rangle.$$
This is well-defined as we are taking a plane passing through two lines that intersect at a point, according to the definition of $Q^*$-nets. 

\begin{lemma}
In terms of data $g_{i,j}$ of a $Q^*$-net $G$ this is equivalent to the above evolution 
$$g'_{i,j} = \langle g_{i-1,j}, g_{i,j-1} \rangle \cap  \langle g_{i+1,j}, g_{i,j+1} \rangle.$$
\end{lemma}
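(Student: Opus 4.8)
The goal is to show that translating the dual Laplace transform $G'$ back into point data by the same duality -- that is, setting $g'_{i,j} = G'_{i-1,j}\cap G'_{i,j-1}\cap G'_{i+1,j}\cap G'_{i,j+1}$ for $(i,j)$ in the domain of $g'$ -- yields exactly the Laplace transform $\langle g_{i-1,j}, g_{i,j-1}\rangle \cap \langle g_{i+1,j}, g_{i,j+1}\rangle$ of the point data $g$ of $G$. The plan is to verify directly that the Laplace-transform point lies on each of the four planes $G'_{i\pm1,j}, G'_{i,j\pm1}$. Since the common intersection of those four planes is, by the very definition of the duality, the point data of $G'$ at $(i,j)$, this identifies the two constructions (and, as a bonus, reproves that the four new planes are concurrent, i.e.\ that $G'$ is again a $Q^*$-net).

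First I would expand each of the four neighbouring points of $g$ as an intersection of four planes of $G$, using $g_{i,j} = G_{i-1,j}\cap G_{i,j-1}\cap G_{i+1,j}\cap G_{i,j+1}$; for instance $g_{i-1,j} = G_{i-2,j}\cap G_{i-1,j-1}\cap G_{i,j}\cap G_{i-1,j+1}$, and similarly for $g_{i,j-1}, g_{i+1,j}, g_{i,j+1}$. The key observation is that $g_{i-1,j}$ and $g_{i,j-1}$ both lie on the two planes $G_{i,j}$ and $G_{i-1,j-1}$, so the line $\langle g_{i-1,j}, g_{i,j-1}\rangle$ is precisely the intersection line $G_{i,j}\cap G_{i-1,j-1}$; likewise $\langle g_{i+1,j}, g_{i,j+1}\rangle = G_{i,j}\cap G_{i+1,j+1}$. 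In particular both of these lines lie in the common plane $G_{i,j}$, which re-confirms that they meet and hence that the Laplace transform is well defined.

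The second step is to match these two lines with the spanning lines appearing in the definition of the new planes. Reading off $G'_{a,b} = \langle G_{a-1,b}\cap G_{a,b+1},\, G_{a+1,b}\cap G_{a,b-1}\rangle$, the line $G_{i,j}\cap G_{i-1,j-1}$ occurs as a spanning line of both $G'_{i-1,j}$ and $G'_{i,j-1}$, while $G_{i,j}\cap G_{i+1,j+1}$ occurs as a spanning line of both $G'_{i+1,j}$ and $G'_{i,j+1}$. Hence $\langle g_{i-1,j}, g_{i,j-1}\rangle \subset G'_{i-1,j}\cap G'_{i,j-1}$ and $\langle g_{i+1,j}, g_{i,j+1}\rangle \subset G'_{i+1,j}\cap G'_{i,j+1}$. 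Since the Laplace-transform point lies on both of these lines, it lies on all four planes $G'_{i\pm1,j}, G'_{i,j\pm1}$, and therefore equals their common intersection, which is the point data of $G'$.

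I do not expect a genuine obstacle: the whole argument is incidence bookkeeping, and the only thing to get right is the index tracking that matches the two defining lines of the Laplace transform with the shared spanning edges of the four new planes. The one place where care is needed is genericity -- identifying $\langle g_{i-1,j}, g_{i,j-1}\rangle$ with $G_{i,j}\cap G_{i-1,j-1}$ presumes $g_{i-1,j}\neq g_{i,j-1}$ -- so the statement is understood for generic $Q^*$-nets, away from the degenerate configurations tacitly excluded throughout.
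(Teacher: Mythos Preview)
Your proposal is correct and follows essentially the same route as the paper: both arguments identify $\langle g_{i-1,j}, g_{i,j-1}\rangle = G_{i,j}\cap G_{i-1,j-1}$ and $\langle g_{i+1,j}, g_{i,j+1}\rangle = G_{i,j}\cap G_{i+1,j+1}$, then observe that these lines occur among the spanning lines of the four planes $G'_{i\pm1,j}, G'_{i,j\pm1}$, so the intersection of the two lines lies on all four planes. Your write-up is in fact slightly more explicit than the paper's (you note each line lies on \emph{two} of the four new planes, whereas the paper only says ``one''), but the underlying argument is identical.
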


\begin{proof}
We have $$g'_{i,j} = G'_{i-1,j} \cap G'_{i,j-1} \cap G'_{i+1,j} \cap G'_{i,j+1}.$$
We know that 
$$G'_{i-1,j} = \langle G_{i-2,j} \cap G_{i-1,j+1},  G_{i,j} \cap G_{i-1,j-1} \rangle,$$
$$G'_{i,j-1} = \langle G_{i-1,j-1} \cap G_{i,j},  G_{i+1,j-1} \cap G_{i,j-2} \rangle,$$
$$G'_{i+1,j} = \langle G_{i,j} \cap G_{i+1,j+1},  G_{i+2,j} \cap G_{i+1,j-1} \rangle,$$
$$G'_{i,j+1} = \langle G_{i-1,j+1} \cap G_{i,j+2},  G_{i+1,j+1} \cap G_{i,j} \rangle.$$

One can easily see that $$\langle g_{i-1,j}, g_{i,j-1} \rangle = G_{i,j} \cap G_{i-1,j-1}, \text{ and }$$  $$\langle g_{i+1,j}, g_{i,j+1} \rangle = G_{i,j} \cap G_{i+1,j+1}.$$ Thus each of the lines $\langle g_{i-1,j}, g_{i,j-1} \rangle$,  $\langle g_{i+1,j}, g_{i,j+1} \rangle$ lies on one of the four planes we are intersecting, and thus their intersection points coincide. \end{proof}

Let $f: \mathbb Z^2_e \rightarrow \mathbb R^3$ and $g: \mathbb Z^2_e \rightarrow \mathbb R^3$ be two $Q$-nets. 
Following \cite[Definition 2.7]{bobenko2008} we say that they are {\it {$F$-transforms}} of each other if for each $({i,j})$ and for each choice of the sign the points $f_{i,j}, g_{i,j}, f_{i+1,j \pm 1}, g_{i+1,j \pm 1}$ are coplanar.  

Now assume we have two $Q$-nets $f: \mathbb Z^2_e \rightarrow \mathbb R^3$ and $G: \mathbb Z^2_o \rightarrow \text {planes in }\mathbb R^3$ given by points and planes respectively. Consider a bipartite graph on torus with vertices labelled by $f_{i,j}$ for $({i,j}) \in \mathbb Z^2_e$ and $G_{i,j}$ for $({i,j}) \in \mathbb Z^2_o$, see Figure \ref{fig:qnet} (as usual, ignore dashed edges and vertices with primed labels).

\begin{proposition} \label{prop:qnet}
The conditions (V) and (F) for this bipartite graph mean that $f_{i,j}$ and  $G_{i,j}$ are $Q$-nets, and that one of the $Q$-nets is an $F$-transform of the other. 
\end{proposition}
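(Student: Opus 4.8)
The plan is to work in $\mathbb{P}^3$ (so $d=3$), where a $Q$-net of points becomes a circuit condition on quadruples of coplanar points and a $Q^*$-net of planes becomes a circuit condition on quadruples of concurrent hyperplanes. First I would pin down the combinatorics: the graph of Figure \ref{fig:qnet} is the square lattice on the torus, with white vertices $f_{i,j}$ at the even sites and black vertices $G_{i,j}$ at the odd sites, each vertex joined to its four nearest neighbors. Its faces are quadrilaterals, of two types according to the parity of the lower-left corner: a face with point-corners $f_{i,j}, f_{i+1,j+1}$ and plane-corners $G_{i+1,j}, G_{i,j+1}$, and a face with point-corners $f_{i+1,j}, f_{i,j+1}$ and plane-corners $G_{i,j}, G_{i+1,j+1}$. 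The whole argument is then a chain of equivalences, so the stated ``means that'' holds in both directions.

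For condition (V) I would treat the two colors separately. At a black vertex $G_{i,j}$ (odd site) the four neighbors are the points $f_{i\pm 1,j}, f_{i,j\pm 1}$; generically these form a circuit in $\mathbb{P}^3$ precisely when they are coplanar, which is exactly the defining condition for $f$ to be a $Q$-net. Dually, at a white vertex $f_{i,j}$ (even site) the four neighboring hyperplanes $G_{i\pm 1,j}, G_{i,j\pm 1}$ form a circuit precisely when they are concurrent, which is the defining condition for $G$ to be a $Q^*$-net. Since the passage between $G$ and its associated point $Q$-net $g$ (both living on $\mathbb{Z}^2_e$) has already been established, this shows that (V) is equivalent to both $f$ and $g$ being $Q$-nets.

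The heart of the argument is condition (F), where I would apply Proposition \ref{prop:coh} to each quadrilateral face. For the first face type, coherence means the line $\langle f_{i,j}, f_{i+1,j+1}\rangle$ meets the codimension-two subspace $G_{i+1,j} \cap G_{i,j+1}$. The key step is to identify this subspace concretely: from the incidences recorded earlier, both planes $G_{i+1,j}$ and $G_{i,j+1}$ pass through $g_{i,j}$ and through $g_{i+1,j+1}$, so $G_{i+1,j} \cap G_{i,j+1} = \langle g_{i,j}, g_{i+1,j+1}\rangle$. Hence coherence of the face is equivalent to the two lines $\langle f_{i,j}, f_{i+1,j+1}\rangle$ and $\langle g_{i,j}, g_{i+1,j+1}\rangle$ meeting, i.e.\ to $f_{i,j}, g_{i,j}, f_{i+1,j+1}, g_{i+1,j+1}$ being coplanar, which is the ``$+$'' instance of the $F$-transform relation at the even site $(i,j)$. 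The same computation for the second face type, using that $G_{i,j}$ and $G_{i+1,j+1}$ both pass through $g_{i+1,j}$ and $g_{i,j+1}$, yields coplanarity of $f_{i+1,j}, g_{i+1,j}, f_{i,j+1}, g_{i,j+1}$, the ``$-$'' instance at the even site $(i,j+1)$. As the two face types range over all even base sites, these recover exactly both sign-choices of the $F$-transform condition at every point.

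I expect the main obstacle to be the line-identification step: establishing that the abstract intersection $G_a \cap G_b$ of two plane-labels coincides with the line through the corresponding $g$-points, and that it is genuinely a line (the two $g$-points distinct and the two planes distinct), so that ``the line $AB$ meets $c\cap d$'' can be rephrased as coplanarity of four points. This is where condition (V) is used twice over: once to guarantee that $g$ is well-defined, since the four planes at each even site must be concurrent, and once to supply the incidences $g\in G$ that make the identification work. The remaining care is the routine genericity bookkeeping needed to equate ``circuit'' with ``coplanar/concurrent'' and to rule out the degenerate branches ($A=B$ or $c=d$) of Proposition \ref{prop:coh}.
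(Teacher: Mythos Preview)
Your proposal is correct and follows essentially the same route as the paper's own proof: condition (V) is identified directly with the $Q$-net/$Q^*$-net defining property, and for (F) you apply Proposition~\ref{prop:coh} after recognizing that $G_{i+1,j}\cap G_{i,j\pm 1}=\langle g_{i,j},g_{i+1,j\pm 1}\rangle$, exactly as the paper does. Your version is simply more explicit about the face combinatorics, the exhaustion of both sign-choices in the $F$-transform condition, and the genericity needed to equate ``circuit'' with ``coplanar/concurrent''.
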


\begin{proof}
 It is clear that the circuit condition (V) is exactly the defining condition of a $Q$-net (or of a $Q^*$-net). 
 
 For the second statement, consider coherence of the face with vertex labels $f_{i,j}, f_{i+1,j \pm 1}, G_{i+1,j }, G_{i, j \pm 1}$. It is easy to see that the intersection of the planes $G_{i+1,j } \cap G_{i, j \pm 1}$ coincides with the line $\langle g_{i,j}, g_{i+1,j \pm 1} \rangle$. Thus, this coherence means that lines $\langle f_{i,j}, f_{i+1,j \pm 1} \rangle$ and $\langle g_{i,j}, g_{i+1,j \pm 1} \rangle$ intersect, which is the defining property of an $F$-transform. 
\end{proof}

\begin{figure}[!htbp]
    \centering
    \includegraphics[width=\textwidth]{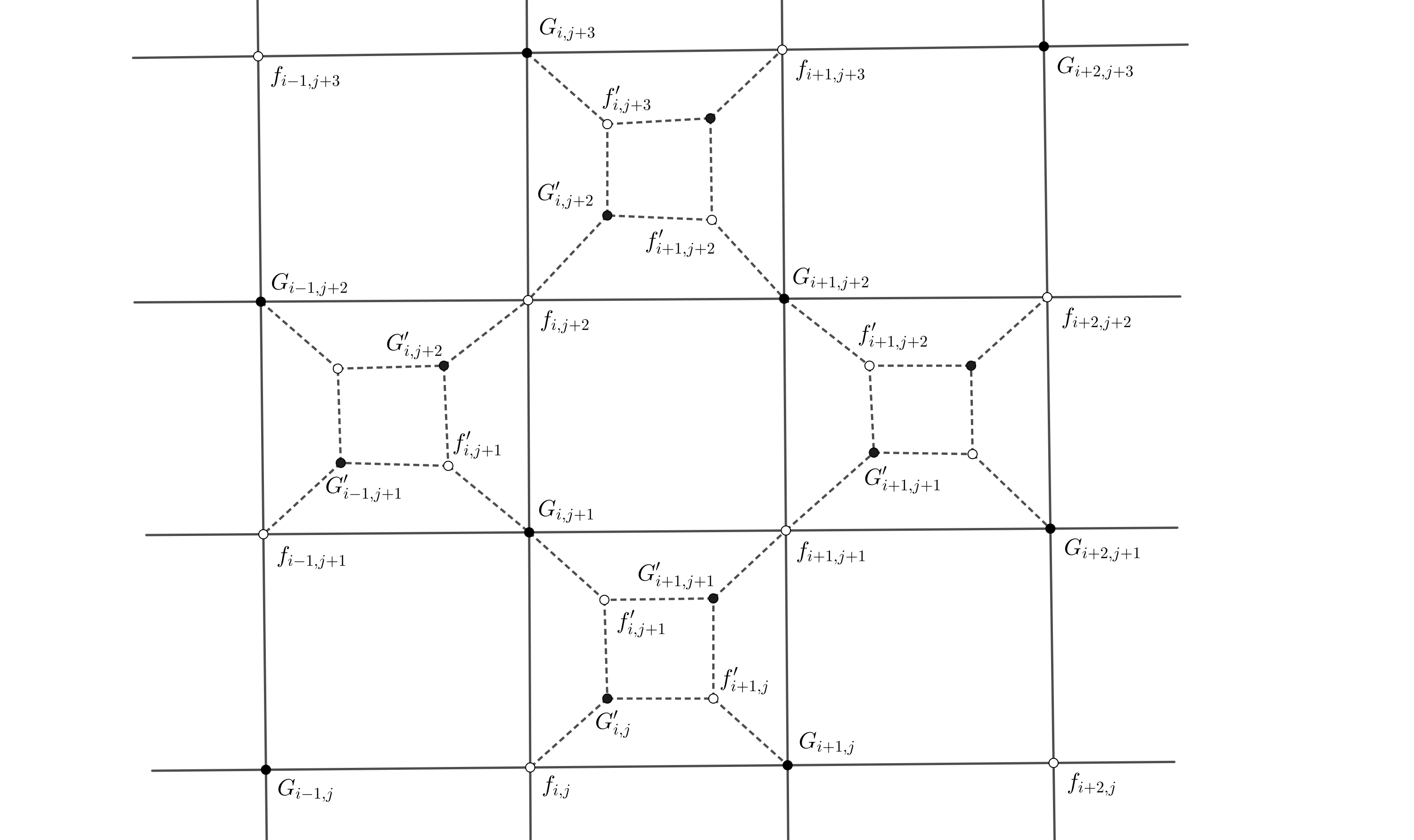} 
    \caption{A fragment of a planar graph associated with two $Q$-nets.}
    \label{fig:qnet}
\end{figure}

\begin{theorem} \label{thm:qnet}
 Urban renewal at faces $f_{i,j}, G_{i,j+1}, f_{i+1,j+1}, G_{i+1,j}$ at odd times and at faces $f_{i,j}, G_{i,j-1}, f_{i+1,j-1}, G_{i+1,j}$ at even times produces exactly the time evolution of both $f$ and $g$ given by Laplace trransform, as defined above. 
\end{theorem}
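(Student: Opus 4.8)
The plan is to verify directly that a single urban renewal at the designated face produces the four new vertices prescribed by the Laplace transform, and then to confirm that the combinatorial type of the graph is restored (up to degree-two vertex removals), so that iterating the procedure reproduces the full time evolution. Since urban renewal of a double circuit configuration is governed by the explicit formulas recorded in Figure \ref{urban3}, the proof is a matter of specializing those formulas to the present labels and matching them against the Laplace transform recurrence.

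Concretely, I would first consider the odd-time urban renewal at the square face with corners $f_{i,j}, G_{i,j+1}, f_{i+1,j+1}, G_{i+1,j}$. By the urban renewal rules, the two new white vertices are the intersections of the face-diagonal line $\langle f_{i,j}, f_{i+1,j+1}\rangle$ with the spans of the remaining neighbors of the two black vertices $G_{i,j+1}$ and $G_{i+1,j}$. I expect one of these, say the new point coming from the neighbors of $G_{i+1,j}$, to work out to $\langle f_{i,j}, f_{i+1,j+1}\rangle \cap \langle f_{i+2,j}, f_{i+1,j-1}\rangle$, which by the definition of the Laplace transform is precisely $f'_{i+1,j}$. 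The other new white vertex, coming from the neighbors of $G_{i,j+1}$, should likewise simplify to an already-present point (or its dual plane), allowing a degree-two removal. Symmetrically, the two new black vertices are the planes $\langle G_{i,j+1}\cap G_{i+1,j},\, \text{(span of remaining neighbors)}\rangle$, which via the dual Laplace transform formula should reproduce a $G'$-plane and a previously present plane. I would record each of these four identifications as a short computation using the neighbor-span intersection formulas, exactly as in the proof of Proposition \ref{prop:pent}.

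Next I would check that, after deleting the two degree-two vertices thus created, the graph returns to its original combinatorial type but with the role of $\mathbb Z^2_e$ and $\mathbb Z^2_o$ interchanged — so that an even-time urban renewal at the face $f_{i,j}, G_{i,j-1}, f_{i+1,j-1}, G_{i+1,j}$ is the next natural move and advances the dual ($G$, equivalently $g$) data by one Laplace step. Here I would invoke the earlier lemma identifying the $G^*$-net evolution with the $g$-net Laplace transform, so that the point-net and dual-net evolutions are recognized as the same alternating dynamics viewed through the two data presentations. The alternation between odd and even times is exactly what keeps switching the domain between $\mathbb Z^2_e$ and $\mathbb Z^2_o$, matching the definition of the Laplace transform given above.

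The main obstacle I anticipate is purely bookkeeping: correctly reading off, from Figure \ref{fig:qnet}, which neighbors of each black vertex survive into the relevant span after the edges incident to the renewal face are accounted for, and then confirming that the intersection-of-spans formulas collapse to the clean expressions $f'_{i,j}$ and $G'_{i,j}$ rather than to some larger subspace. Because the ambient dimension is $d=3$ (points and planes in $\mathbb R^3$), one must be careful that the spans $\langle f_{i-1,j}, f_{i,j-1}\rangle$ appearing in the Laplace formula are genuinely lines and that the two lines being intersected are coplanar — but this is guaranteed by the $Q$-net condition (V), which has already been verified in Proposition \ref{prop:qnet} to hold throughout the evolution. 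Apart from this indexing care, each identification is a direct substitution into the urban-renewal output formulas, so no conceptually new ingredient is needed beyond Theorem \ref{mainThm} and the explicit Laplace transform definitions.
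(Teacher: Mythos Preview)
Your proposal is correct and takes essentially the same approach as the paper: direct verification by specializing the urban renewal formulas of Figure~\ref{urban3} to the face labels and matching the output against the Laplace transform definitions. The paper's own proof is a single sentence (``Follows directly from the rules of urban renewal'') with one sample computation, so your roadmap is simply a more thorough version of the same argument, including the combinatorial check and the dual ($G$-net) side that the paper leaves implicit.
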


\begin{proof}
 Follows directly from the rules of urban renewal. For example, the new  $f'_{i+1,j}$ is given by $\langle f_{i,j}, f_{i+1,j+1} \rangle \cap \langle f_{i-1,j+1}, f_{i,j+2} \rangle$, etc. 
\end{proof}

\begin{theorem} \label{thm:laplace}
 Let $h:  \mathbb Z^3 \rightarrow \mathbb R^3$ be a $3$-dimensional $Q$-net. Then the Laplace transform of $h$ is also a $Q$-net. 
\end{theorem}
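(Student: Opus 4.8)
The plan is to reduce the three-dimensional statement to the two-layer $F$-transform picture already developed above, and then to propagate the conclusion across all layers by invoking that moves preserve coherence. The first and most important step is to reinterpret a three-dimensional $Q$-net as a stack of two-dimensional $Q$-nets glued by $F$-transforms. Singling out the third coordinate as the ``transform direction'', write $h^{(k)} := h_{\,\cdot\,,\,\cdot\,,k}$ for the slice $\{(i,j,k)\}$. Planarity of the elementary quads of $h$ in the $1$--$2$ direction says precisely that each slice $h^{(k)}$ is a two-dimensional $Q$-net, whereas planarity of the transverse ($1$--$3$ and $2$--$3$) quads says, after the identification $\mathbb{Z}^2_e \cong \mathbb{Z}^2$, exactly that consecutive slices $h^{(k)}$ and $h^{(k+1)}$ are $F$-transforms of one another in the sense defined above. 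Thus being a three-dimensional $Q$-net is equivalent to specifying a sequence of two-dimensional $Q$-nets, each an $F$-transform of the next; conversely any such sequence reassembles into a three-dimensional $Q$-net.

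Next I would translate each consecutive pair into the dimer picture. By Proposition \ref{prop:qnet}, a pair $(h^{(k)}, h^{(k+1)})$ of $F$-transformed $Q$-nets --- one recorded as a point-net and the other as a dual plane-net --- is exactly a coherent double circuit configuration on the toric bipartite graph of Figure \ref{fig:qnet}. By Theorem \ref{thm:qnet}, the urban renewals at the prescribed faces realize simultaneously the Laplace transform of the point-net and the dual Laplace transform of the plane-net, and by Theorem \ref{mainThm} these moves preserve coherence. Hence for every $k$ the transformed pair $(h'^{(k)}, h'^{(k+1)})$ is again a coherent double circuit configuration: each $h'^{(k)}$ is a two-dimensional $Q$-net, and consecutive transformed slices are again $F$-transforms.

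To finish, I would reassemble the slices. Because the Laplace transform is given by the single uniform rule $h'_{i,j,k} = \langle h_{i-1,j,k}, h_{i,j-1,k}\rangle \cap \langle h_{i+1,j,k}, h_{i,j+1,k}\rangle$, it acts on every slice by the same operation and leaves the third coordinate intact, so the $h'^{(k)}$ genuinely form a $k$-indexed stack. By the preceding paragraph every slice is a $Q$-net (all $1$--$2$ faces planar) and every consecutive pair is an $F$-transform (all transverse faces planar), which by the equivalence of the first step is exactly the assertion that $h'$ is a three-dimensional $Q$-net.

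The main obstacle I anticipate is the bookkeeping in the first step: one must check, under the identification $\mathbb{Z}^2_e \cong \mathbb{Z}^2$, that the transverse planar quads of a three-dimensional $Q$-net correspond precisely to the $j\pm1$ coplanarity quads defining an $F$-transform, and that every elementary face of $h'$ is accounted for among the $1$--$2$, $1$--$3$, and $2$--$3$ types, with none left over. A secondary point requiring care is the parity shift introduced by the Laplace transform within each slice (it sends an $\mathbb{Z}^2_e$-net to an $\mathbb{Z}^2_o$-net); one must verify that this shift is compatible across slices, so that the per-pair coherence-preservation glues into a single globally defined transformed net rather than a family of locally defined pieces.
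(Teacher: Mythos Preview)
Your proposal is correct and follows essentially the same approach as the paper: decompose the three-dimensional $Q$-net into consecutive two-dimensional slices related by $F$-transforms, encode each pair as a coherent double circuit configuration via Proposition~\ref{prop:qnet}, and then invoke Theorem~\ref{thm:qnet} together with Theorem~\ref{mainThm} to conclude that the Laplace transform preserves both the $Q$-net and $F$-transform conditions. The paper's proof is considerably more terse, but the logical structure is identical; your additional remarks about bookkeeping (the $\mathbb{Z}^2_e \cong \mathbb{Z}^2$ identification and the parity shift) are reasonable points of care but do not represent a different strategy.
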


\begin{proof}
 Consider two consecutive layers of $h_{i,j,k}$ obtained by fixing two consecutive values of the thrird coordinate: $k, k+1$. Let $f_{i,j} = h_{i,j,k}$ and let $g_{i,j} = h_{i,j,k+1}$. Then the fact that $h$ is a $Q$-net is equivalent to each of $f,g$ being a ($2$-dimensional) $Q$-net, and also $f$ and $g$ being $F$-transforms of each other. From Theorem \ref{mainThm} and Theorem \ref{thm:qnet} it follows immediately that those conditions are preserved by Laplace transform. 
\end{proof}

This theorem is not new, in a slightly different language it has appeared as \cite[Theorem 12]{bobenko2007} and \cite[Theorem 2.14]{bobenko2008}. The value of our approach is the automatic generation of the theorem by urban renewal on coherent double circuit configurations.

\section{The reconstruction problem and the spectral curve}\label{sec:rec}

Let $\Sigma$ be a closed orientable surface, and $\Gamma \subset \Sigma$  be a bipartite graph which partitions $\Sigma$ into topological disks. Suppose also that we are given a {$d$-dimensional circuit configuration} on $\Gamma$  (i.e., an assignment of a point in $\P^d$ to every white vertex of $\Gamma$ such that the points assigned to the neighbors of any black vertex form a circuit, see Definition \ref{def:cc}). We will refer to this configuration as the \emph{white data}. Given such white data, we want to know whether we can upgrade it to a coherent double circuit configuration. To that end, we need to supplement the white data with  \emph{black data}, i.e., an assignment of a hyperplane in $\P^d$ to every black vertex of~$\Gamma$ such that together the white and black data form a coherent double circuit configuration.

Suppose that the graph $\Gamma$ has $e$ edges and $f$ faces. Additionally, for the rest of this section we will assume that $\Gamma$ has equal number $k$ of white and black vertices. This condition is perfectly natural in the context of the dimer model, since otherwise $\Gamma$  has no dimer covers.

 Note that the space of possible assignments of hyperplanes to black vertices of $\Gamma$ has dimension $kd$. 
 
 \begin{proposition}\label{prop:dim}
 The coherent double circuit configuration condition imposes  $k(d+2 )- e + f -1$ equations on the black data. \end{proposition}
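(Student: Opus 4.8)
The plan is to count separately the equations contributed by the circuit condition (V) and the coherence condition (F), and then add them; the only delicate point is a single global relation among the face equations, which produces the $-1$.

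\emph{Counting the (V) equations.} Since the white data is fixed and already forms a circuit configuration, the requirement that the points at the neighbors of each black vertex form a circuit is automatic and constrains the black data not at all. The circuit conditions bearing on the black data therefore come only from white vertices: at a white vertex $w$ of degree $\deg(w)$, the $\deg(w)$ hyperplanes attached to its black neighbors must form a circuit in the dual space $(\P^d)^*$. I would compute this codimension by viewing the hyperplanes as covectors: $m$ of them form a circuit exactly when the $m\times(d+1)$ matrix they span has rank $m-1$ rather than $m$, and the determinantal variety of such matrices has codimension $(d+1-(m-1))(m-(m-1)) = d+2-m$ (minimality of the dependence being an open condition that adds nothing). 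This count is invariant under rescaling the individual covectors, so it descends to $d+2-\deg(w)$ equations on the projective black data at $w$. Summing over white vertices and using $\sum_{w}\deg(w)=e$ (each edge has a unique white endpoint), the (V) conditions contribute $\sum_{w}(d+2-\deg(w)) = k(d+2)-e$ equations.

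\emph{Counting the (F) equations.} Each of the $f$ faces contributes one equation, namely that its multi-ratio equal $1$, for $f$ equations a priori. The key observation is that these are subject to exactly one relation: the product of all face multi-ratios, computed with the orientation induced from $\Sigma$, is identically $1$. I would prove this by the cancellation suggested by the very formula for the multi-ratio: writing each face multi-ratio as $\prod_{\ell} \ell(A^-)/\ell(A^+)$, where $\ell$ ranges over the black vertices on the face boundary and $A^{-},A^{+}$ are the white vertices immediately preceding and following $\ell$ in clockwise order, every edge $\{A,\ell\}$ lies on precisely two faces and, $\Sigma$ being orientable, is traversed by them in opposite senses; hence $\ell(A)$ occurs once in a numerator and once in a denominator and cancels. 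As this holds for every edge, the global product is $1$. This is exactly the content of the master theorem (Theorem \ref{mthm}): coherence of any $f-1$ faces forces coherence of the last. Thus the coherence conditions impose $f-1$ independent equations.

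Adding the two contributions yields $k(d+2)-e+(f-1)=k(d+2)-e+f-1$, as asserted. The step I expect to be the main obstacle is not either count in isolation but the independence bookkeeping: to read the total as a genuine codimension, one must know that, beyond the single product relation isolated above, the circuit conditions at distinct white vertices (which share black neighbors) together with the face conditions do not impose further redundant equations. I would handle this by treating the statement as the expected generic count recorded by the formula, which is all that is needed for the reconstruction problem and the spectral-curve discussion that follow.
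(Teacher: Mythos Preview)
Your proof is correct and follows essentially the same approach as the paper: count the circuit conditions at white vertices as $k(d+2)-e$ and invoke the master theorem to reduce the $f$ face conditions to $f-1$. You provide more justification than the paper does (the determinantal codimension argument for circuits, the edge-cancellation explanation underlying the master theorem, and an explicit caveat about independence), but the structure and the count are identical.
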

 \begin{proof}
 For each white vertex, we have the corresponding circuit condition for its neighbors. The condition for $m \leq d+2$ points in $\P^d$ to form a circuit is equivalent to $d+2 - m$ equations. So, using that the sum of degrees of white vertices equals the number of edges, we find that the total number of such equations is
 $k(d+2) - e$.  Additionally, we have $f$ coherence equations. However, by the master theorem (Theorem \ref{mthm}), any $f-1$ of such equations imply the last one, giving the desired formula for the total number of equations. \end{proof}

Fix some white data, and denote by $\mathcal B$ the space of compatible black data (by which we mean that together white and black data they form a coherent double circuit configuration). The space $\mathcal B$ is a quasi-projective algebraic variety.

\begin{proposition}\label{prop:dim2} Assume that the equations from Proposition~\ref{prop:dim} are independent. Then the dimension of the space $\mathcal B$ of black data compatible with the given white data is
$
 1 - \chi
$
where $\chi$ is the Euler characteristic of $\Sigma$. 
\end{proposition}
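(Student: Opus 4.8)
The plan is to compute $\dim \mathcal{B}$ as the difference between the dimension of the ambient space of black data and the number of equations cutting out $\mathcal{B}$, and then to identify the resulting expression with $1-\chi$ through the Euler characteristic of the CW structure that $\Gamma$ induces on $\Sigma$.

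First I would recall that an assignment of a hyperplane in $\P^d$ to each of the $k$ black vertices is a point in a space of dimension $kd$, since the hyperplanes in $\P^d$ constitute the dual projective space $(\P^d)^*$, itself of dimension $d$. By Proposition~\ref{prop:dim}, the coherent double circuit configuration condition cuts out $\mathcal{B}$ inside this ambient space by $k(d+2) - e + f - 1$ equations. Under the stated independence hypothesis, these equations are independent, so each one drops the dimension by exactly one, and I obtain
$$
\dim \mathcal{B} = kd - \bigl(k(d+2) - e + f - 1\bigr).
$$
The terms $kd$ and $-kd$ then cancel, leaving the simplified expression
$$
\dim \mathcal{B} = -2k + e - f + 1.
$$

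Finally, I would identify this with $1 - \chi$. Because $\Gamma$ partitions $\Sigma$ into topological disks, it endows $\Sigma$ with a CW structure whose cells are precisely the vertices, edges, and faces of $\Gamma$. The graph has $2k$ vertices (namely $k$ white and $k$ black), $e$ edges, and $f$ faces, so
$$
\chi = 2k - e + f.
$$
Substituting $-\chi = -2k + e - f$ into the simplified formula yields $\dim \mathcal{B} = 1 - \chi$, which is the claimed value.

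The computation is routine, so I do not expect a genuine obstacle; the only points requiring care are, first, confirming that the independence hypothesis is exactly what licenses subtracting the full count of equations from the ambient dimension (so that codimension equals the number of equations), and second, that the disk-partition assumption guarantees a bona fide CW structure with exactly $2k$ vertices, so that the Euler-characteristic bookkeeping matches the formula above.
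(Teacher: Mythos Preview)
Your proof is correct and follows the same approach as the paper's own proof: subtract the equation count $k(d+2)-e+f-1$ from the ambient dimension $kd$ and identify $-2k+e-f+1$ with $1-\chi$ via the CW structure on $\Sigma$. You simply spell out the Euler-characteristic bookkeeping a bit more explicitly than the paper does.
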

\begin{proof}
We have $k(d+2 )- e + f -1$ equations on $kd$ parameters, so the dimension is
$
 -2k  + e - f + 1 = 1 - \chi. 
 $
\end{proof}
In particular, if $\Sigma$ is a torus, then the space $\mathcal B$ of black data compatible with  given white data is a {curve} (assuming the equations from Proposition~\ref{prop:dim} are independent). In this section, we will show that this curve is closely related to the \emph{spectral curve} associated with the given black data. Namely, denoting the spectral curve by $\mathcal S$, we always have a natural rational map  $\mathcal F \colon \mathcal B \to \mathcal S$. Furthermore, we will see that in examples this map is birational.

First, let us recall the notion of a spectral curve. In \cite{kenyon2006dimers, GK}, such a curve is associated to an edge-weighted bipartite graph on a torus. Suppose $\Gamma$ is a bipartite graph on a torus whose edges are endowed with non-zero numbers, called in this context \emph{Kasteleyn weights}. Then its universal cover $\widetilde \Gamma$ is a bi-periodic weighted graph in $\R^2$. Denote by $W$ and $B$ the sets of white and black vertices of $\widetilde \Gamma$ respectively, and let $\C^W$, $\C^B$ be the spaces of functions $W \to \C$ and $B \to \C$. 
\begin{definition}
The Kasteleyn operator $\mathcal K \colon \C^W \to \C^B$ is defined by the rule
$$
\mathcal K (f)(v) = \sum_{\mathmakebox[2em][c]{e = (v,w)}}\kappa(e)f(w)
$$
where $v \in B$ is a black vertex of $\widetilde \Gamma$, the sum is taken over all edges $e = (v,w)$ incident to $v$, and $\kappa(e)$ is the Kasteleyn weight of the edge $e$.

\end{definition}
 Further, we have a natural action of $H_1(T^2, \Z) = \pi_1(T^2)$ on $\tilde \Gamma$, and hence a representation on $\C^W$ and $\C^B$. For $ \alpha \in H^1(T^2, \mathbb C^*)$, denote the corresponding weight spaces by $\C^W_\alpha, \C^B_\alpha$. We have
$$
\C^W_\alpha= \{ f \in \C^W_\alpha \mid \gamma \in H_1(T^2, \Z), v \in W \implies f(\gamma \cdot v) = \alpha(\gamma)f   \},
$$
and $\C^B_\alpha$ is defined analogously. Note that any function $f \in \C^W_\alpha$ is uniquely determined by its values on white vertices belonging to the fundamental domain of the action of $H_1(T^2, \Z) = \pi_1(T^2)$ on $\tilde \Gamma$. Therefore, $\dim \C^W_\alpha = k$ . Analogously, $\dim \C^B_\alpha = k$. 
\begin{definition}
The \emph{spectral curve} $\mathcal S$ is  the set of $ \alpha \in H^1(T^2, \mathbb C^*)$ such that the restriction $\mathcal K\vert_{ \C^W_\alpha} \colon  \C^W_\alpha \to  \C^B_\alpha$ is not an isomorphism.
\end{definition}

Choosing suitable bases in $\C^W_\alpha $, $\C^B_\alpha $, one can represent the restriction $\mathcal K\vert_{ \C^W_\alpha} \colon  \C^W_\alpha \to  \C^B_\alpha$ by \emph{magnetically altered Kasteleyn matrix} \cite{kenyon2006dimers}, constructed as follows. The rows and columns of this matrix are labeled by black and white vertices of $\Gamma$ respectively, while the $({i,j})$ entry is given by
$$
\kappa(e) \lambda^{\langle e, \gamma_2 \rangle}\mu^{\langle \gamma_1, e \rangle}
$$
where $e$ is the edge between the black vertex $i$ and white vertex $j$, $\gamma_1, \gamma_2$ are cycles on the torus giving a basis in homology, $\langle e, \gamma_i \rangle$ are intersection numbers computed using a natural (e.g., white to black) orientation of $e$, and $\lambda, \mu$ are coordinates of $ \alpha \in H^1(T^2, \mathbb C^*)$ given by pairing of $\alpha$ with $\gamma_1, \gamma_2$. If there is no edge between the black vertex $i$ and white vertex $j$, then the corresponding entry in equal to $0$. If there are several such edges, then the corresponding entry is given by the sum of expressions as above, one for each edge.

The spectral curve is given by the zero locus of the determinant of this matrix. That determinant is a Laurent polynomial on $H^1(T^2, \mathbb C^*)$, of the form
$$
\sum \kappa(\mathbf{e}) \lambda^{\langle \mathbf{e}, \gamma_2 \rangle} \mu^{\langle \gamma_1, \mathbf{e} \rangle}
$$
where the sum is taken over all dimer covers $e_1,\dots , e_k$ of $\Gamma$, $\mathbf{e} := e_1+ \dots + e_k$, $\kappa(\mathbf{e}):= \kappa(e_1) \cdots \kappa(e_k)$.  In particular, for generic Kasteleyn weights, this determinant is not identically $0$, provided that $\Gamma$ has at least one dimer cover. So, the zero locus of that determinant is indeed a curve.


Now, assume we are given a circuit configuration on a bipartite graph $\Gamma$ on a torus.  That means that we have a point in $\P^d$ assigned to every white vertex $w$ of $\Gamma$ such that the points assigned to the neighbors of any black vertex form a circuit. Lift these points to vectors  $\mathbf{A}(w)$ in $\C^{d+1}$. Then vectors assigned to neighbors of any black vertex are linearly dependent, which means that for every edge $(v,w)$ of $\Gamma$, there exists a number $\kappa(v,w) \in \C$ such that for every black vertex $v$ of $\Gamma$ we have
$$
 \sum_{\mathmakebox[2em][c]{e = (v,w)}}\kappa(e)\mathbf{A}(w) =0
$$
where the sum is taken over all edges $e = (v,w)$ incident to $v$. Moreover, since the set of vectors assigned to neighbors of any black vertex is a circuit, any its proper subset is linearly independent, which implies $\kappa(v,w) \neq 0$ for any edge $(v,w)$. Following \cite{affolter2024vector}, we interpret the coefficients $\kappa(v,w) \in \C^*$ as Kasteleyn weights. This gives a spectral curve assigned to a circuit configuration. The curve is independent on the choice of lifts of points to $\C^{d+1}$, as well as on the choice of the linear equation satisfied by vectors assigned to neighbors of each black vertex.

Thus, for every choice of white data, we get a spectral curve $\mathcal S$. We aim to show that any choice of compatible black data additionally gives a point on the said curve, thus giving rise to a map $\mathcal F \colon \mathcal B \to \mathcal S$ from the space of black data compatible with the given white data to the spectral curve assigned to the same white data. 

First, let us define the notion of a \emph{cohomology class of a coherent tiling}.   Suppose we are given  a tiling, i.e.  
a bipartite graph $\Gamma$ on a surface $\Sigma$, together with an assignment of a point in $\P^d$ to every white vertex and a hyperplane in $\P^d$ to every black vertex. Lift these points and hyperplanes to, respectively, vectors and covectors in $\C^{d+1}$ and assign to each edge of $\Gamma$ the pairing between the vector and covector sitting at its endpoints. Assuming these numbers do not vanish, we can interpret them as a cocycle on $\Gamma$ with coefficients in $\C^*$. This cocyle depends on the choice of lifts of points and hyperplanes to vectors and covectors, but its cohomology class does not. Furthermore, if the given tiling is coherent, then the period of this cohomology class over any contractible cycle is equal to $1$, which means that it belongs to the image of the embedding $H^1(\Sigma, \C^*) \to H^1(\Gamma, \C^*) $. That way, with any coherent tiling we can associate a cohomology class of the corresponding surface. In particular, this applies to any coherent double circuit configuration. 

\begin{proposition}\label{prop:cccdcc}
Assume that we are given a coherent double circuit configuration on a torus. Then the corresponding cohomology class in $H^1(T^2, \C^*)$ lies on the spectral curve associated with the white data.
\end{proposition}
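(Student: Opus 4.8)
The plan is to exhibit a nonzero element in the kernel of the restricted Kasteleyn operator $\mathcal K\vert_{\C^W_\alpha}$, where $\alpha$ denotes the cohomology class of the given coherent double circuit configuration. Since $\Gamma$ has equally many white and black vertices, $\dim \C^W_\alpha = \dim \C^B_\alpha = k$, so producing such a kernel element shows that $\mathcal K\vert_{\C^W_\alpha}$ fails to be an isomorphism, which is exactly the assertion that $\alpha \in \mathcal S$. Throughout I would work on the universal cover $\widetilde\Gamma$, lifting the points and hyperplanes to $\Z^2$-invariant vectors $\mathbf A(w)$ and covectors $\boldsymbol\ell(v)$, so that the Kasteleyn weights $\kappa(e)$ attached to the white data satisfy $\sum_{e=(v,w)}\kappa(e)\mathbf A(w) = 0$ at every black vertex $v$, by the very construction of the spectral curve from a circuit configuration.

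First I would record that the edge cochain $c(e) := \boldsymbol\ell(v)(\mathbf A(w))$ for $e = (v,w)$ is precisely the cocycle whose class in $H^1(T^2,\C^*)$ is the class $\alpha$ of the configuration, as set up in the discussion of the cohomology class of a coherent tiling above. It is $\C^*$-valued because coherence forbids the incidences $A\in\ell$ along the edges of each tile, and it is a multiplicative cocycle precisely because the multi-ratio around every tile equals $1$. The heart of the argument is then to convert this cocycle into a kernel vector: since $\widetilde\Gamma$ is simply connected, the pulled-back cocycle is a coboundary, so there exists a $\C^*$-valued function $g$ on the vertices of $\widetilde\Gamma$ with $g(w) = g(v)\,\boldsymbol\ell(v)(\mathbf A(w))$ on every edge $(v,w)$.

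With $g$ in hand the computation closes immediately: at any black vertex $v$,
\[
\mathcal K(g)(v) = \sum_{e=(v,w)}\kappa(e)\,g(w) = g(v)\sum_{e=(v,w)}\kappa(e)\,\boldsymbol\ell(v)(\mathbf A(w)) = g(v)\,\boldsymbol\ell(v)\!\Big(\sum_{e=(v,w)}\kappa(e)\mathbf A(w)\Big) = 0,
\]
so the restriction $g\vert_W$ lies in the kernel of $\mathcal K$, and it is nonzero since $g$ is $\C^*$-valued. It then remains to check that $g\vert_W$ lives in the correct weight space. Because $c$ is $\Z^2$-invariant, the ratio $g(\gamma\cdot v)/g(v)$ is constant along edges, hence constant on the connected cover, so $g(\gamma\cdot) = \alpha(\gamma)\,g(\cdot)$ for the character $\alpha$ given by the periods of $c$; that is, $g\vert_W \in \C^W_\alpha$, and this character is by definition the class of the configuration.

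I expect the main obstacle to be the bookkeeping in this last step: pinning down the orientation convention for edges so that $g(w) = g(v)\boldsymbol\ell(v)(\mathbf A(w))$ is the correct coboundary relation, and verifying that the resulting deck-transformation character $g(\gamma\cdot)/g(\cdot)$ is precisely the class $\alpha$ recorded in the cohomology-class construction rather than its inverse. This is exactly the point where the weight-space definition of $\mathcal S$ must be matched with the $H^1(T^2,\C^*)$-valued class of the coherent tiling; once the conventions are aligned, the algebraic identity $\boldsymbol\ell(v)\big(\sum_{e}\kappa(e)\mathbf A(w)\big) = 0$ does all of the real work.
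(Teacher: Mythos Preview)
Your proposal is correct and follows essentially the same argument as the paper: both lift the coherent tiling to vectors and covectors, write the edge cocycle $\langle \boldsymbol\ell(v),\mathbf A(w)\rangle$ as a coboundary $f(v)^{-1}f(w)$ on the universal cover, and then pair the defining relation $\sum_e \kappa(e)\mathbf A(w)=0$ with $\boldsymbol\ell(v)$ to see that $f\vert_W$ lies in the kernel of $\mathcal K\vert_{\C^W_\alpha}$. Your write-up is in fact slightly more explicit than the paper's about why $g\vert_W$ lands in the correct weight space and about the orientation bookkeeping, but the underlying idea is identical.
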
\begin{proof}Lift points and hyperplanes of a coherent double circuit configuration to vectors and covectors respectively. For any black vertex $v$, let $\boldsymbol{\ell}(v)$ be the corresponding covector, and for any white vertex $w$, let $\mathbf{A}(w)$ be the associated vector. Then the associated cohomology class $\alpha \in H^1(T^2, \C^*)$ can be represented by the cocycle which associates the number $\langle \boldsymbol{\ell}(v), \mathbf{A}(w) \rangle$ to every edge $(v,w)$. Lift all the objects to the universal cover of the torus. On the universal cover, the cocycle $\langle \boldsymbol{\ell}(v), \mathbf{A}(w) \rangle$ becomes a coboundary, which means there is a function $f \colon W \sqcup B \to \C^*$ on the edge-set of the universal cover such that
\begin{equation}\label{eq:lvvw}
\langle \boldsymbol{\ell}(v), \mathbf{A}(w) \rangle = f(v)^{-1} f(w).\tag{$\star$}
\end{equation}
Furthermore, since this cocycle represents the class $\alpha \in H^1(T^2, \C^*)$, we have $f\vert_W \in  \C^W_\alpha$, $f\vert_B \in  \C^B_\alpha$.

The spectral curve is defined using the Kasteleyn weights $\kappa(v,w)$ associated with the white data. 
For any black vertex $v$ of the universal cover $\widetilde \Gamma$, the weights satisfy
$$
 \sum_{\mathmakebox[2em][c]{e = (v,w)}}\kappa(e)\mathbf{A}(w) = 0.
$$
Pairing both sides with $\boldsymbol{\ell}(v)$, we get
$$
f(v)^{-1}\sum_{\mathmakebox[2em][c]{e = (v,w)}} \kappa(e) f(w)  = 0,
$$
so $f\vert_W \in \mathrm{Ker}\, \mathcal K\vert_{ \C^W_\alpha}$, which means that $\alpha$ belongs to the spectral curve. 
\end{proof}

\begin{question}
Analogously, one can associate a spectral curve to the black data and show that the cohomology class of a coherent double circuit configuration on a torus lies on that curve as well.   Are spectral curves associated with white and black data in a coherent double circuit configuration the same?
\end{question}
From Proposition \ref{prop:cccdcc},  we get  a rational map $\mathcal F$ from from the space $\mathcal B$ of black data compatible with the given white data (which is a curve assuming the equations from Proposition~\ref{prop:dim} are independent) to the spectral curve $\mathcal S$ assigned to the same white data. 

\begin{remark}
The construction of the map  $\mathcal F$, as well as coincidence of dimensions of $\mathcal B$ and $\mathcal S$, carries over to double circuit configurations on more general surfaces. Specifically, for a surface of genus $g$, both dimensions are equal to $2g-1$. For the variety $\mathcal B$ of black data, this is Proposition \ref{prop:dim2} (which holds assuming that the equations from Proposition \ref{prop:dim} are independent), while for the spectral curve $\mathcal S$ (which should be called in this setting the \emph{spectral surface}) this follows from its definition as a hypersurface in the cohomology.\end{remark}

Since $\mathcal F \colon \mathcal B \to \mathcal S$ is a rational map between varieties of the same dimension, it is natural to ask whether it is birational. Below we consider some examples where it is or is not the case.

\begin{problem}
Find general conditions of the graph $\Gamma$ and dimension $d$ which guarantee that the map $\mathcal F$ from black data to the spectral curve is birational for generic white data.
\end{problem}

\begin{example}\label{prop:pentbir}
Let us show that, for $2$-dimensional double circuit configurations on graphs corresponding to pentagram maps $ T_k$, the map $\mathcal F \colon \mathcal B \to \mathcal S$ is birational  for generic white data. In the language of polygons, this means that for a generic polygon $P$, there exists a one-dimensional family of polygons $Q$ such that  $Q$ is inscribed in $P$ and $T_k(Q)$ in $T_k(P)$. This family is parametrized by the spectral curve associated to $P$.

Assume that the spectral curve $\mathcal S$ is irreducible and hence smooth everywhere except for at finitely many points. We will show that any generic smooth point $\alpha \in \mathcal S \subset H^1(T^2, \C^*)$ has a unique preimage under $\mathcal F$. In other words, for any generic smooth point $\alpha \in \mathcal S$, there exists a unique way to update the given white data to a double circuit configuration whose cohomology class is $\alpha$. 

Fix a lift of the white data to vectors and Kasteleyn weights. Since $\alpha$ is a smooth point of the spectral curve, the kernel of the Kasteleyn operator $\mathcal K\vert_{ \C^W_\alpha}$ is one-dimensional. Thus, one can recover the function $f\vert_W \colon W \to \C$ from the proof of Proposition  \ref{prop:cccdcc} uniquely up to a constant factor. For generic white data, this function takes values in $\C^*$. Combining it with an arbitrary non-vanishing function $f\vert_B \in  \C^B_\alpha$, we get a function $f \colon W \sqcup B \to \C^*$. Now, to recover the black data, we need to find a function $\ell \colon W \to \C^3$ such that, for every edge~$(v,w)$, we have equation \eqref{eq:lvvw}. 
This gives four linear equations for each of the vectors $\boldsymbol{\ell}(v)$. Additionally, for any black vertex $v$, we have
$$
\sum_{\mathmakebox[2em][c]{e = (v,w)}} \kappa(e) f(w)  = 0
$$
(because $f\vert_W$ is in the kernel of the Kasteleyn operator)
and 
$$
 \sum_{\mathmakebox[2em][c]{e = (v,w)}}\kappa(e)\mathbf{A}(w) = 0
$$
(by definition of Kasteleyn weights), 
where the sums are taken over all edges incident to $v$. This means that one of the four equations is redundant. At the same time, the remaining equations are independent by the circuit condition for the vectors $\mathbf{A}(w)$. Thus, the covectors $\boldsymbol{\ell}(v)$ are uniquely recovered from the corresponding point on the spectral curve. Together, the vectors $\mathbf{A}(w)$ and covectors $\boldsymbol{\ell}(v)$ give rise to a double circuit configuration. The circuit condition for the covectors $\boldsymbol{\ell}(v)$ is trivial because the degree of all vertices is equal to $4$, while the coherence condition is a consequence of \eqref{eq:lvvw}. This double circuit configuration on the universal cover $\tilde \Gamma$ descends to $\Gamma$ since $f\vert_W, f\vert_B$ satisfy the same quasi-periodicity condition: $f\vert_W \in  \C^W_\alpha$, $f\vert_B \in  \C^B_\alpha$. Choosing a different extension $f\vert_B$ of $f\vert_W$ to black vertices, we get proportional covectors $\boldsymbol{\ell}(v)$ and hence the same double circuit configuration.

\end{example}

The same argument works for any graph such that all vertices have degree $d+2$, where $d$ is the dimension of the underlying double circuit configuration (note that this is the maximal possible degree, since $d+3$ points in $\P^d$ cannot form a circuit). In the case of vertices of smaller degree,  equations \eqref{eq:lvvw} on covectors $\boldsymbol{\ell}(v)$ become underdetermined and should be supplemented by additional equations coming from circuit conditions on $\boldsymbol{\ell}(v)$. The latter equations are nonlinear, which makes the unique solvability issue more subtle. 
%

\begin{example}
Consider a $2$-dimensional circuit configuration on a graph on a torus obtained from a square grid by removing one edge. Let us show that in this setting any point on the spectral curve has either zero or infinitely many preimages under $\mathcal F$, which means that $\mathcal F$ is not birational. Let $v$ be the degree $3$ black vertex. Fixing a point on the spectral curve gives three linear equations on the covector $\boldsymbol{\ell}(v)$, of the form $\langle \boldsymbol{\ell}(v), \mathbf{A} (w_i)\rangle = \dots$, where $\mathbf{A}(w_i) $ are vectors assigned to neighbors $w_i$ of $v$. As explained above, one of these equations is redundant, so there are either zero or infinitely many solutions $\boldsymbol{\ell}(v)$. There are no further equations on $\boldsymbol{\ell}(v)$ coming from cicuit conditions, because all neighbors of $v$ are of degree $4$. 
%
\end{example}

\begin{figure}[t]
    \centering
    \includegraphics[width=0.65\textwidth]{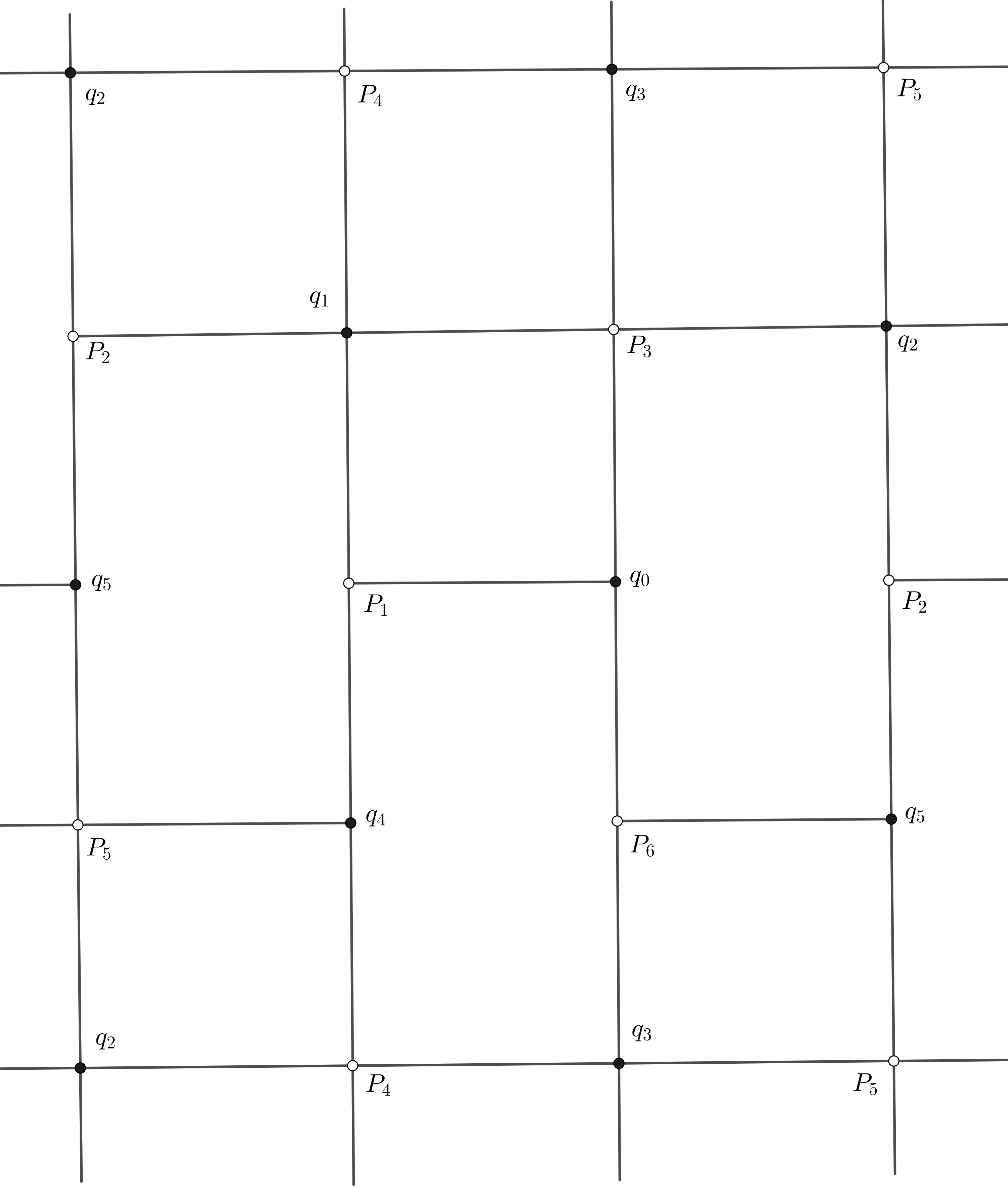} 
    \caption{A toric bipartite graph for spirals in $\pp_{2,5}$.}
    \label{fig:spiralgraph2}
\end{figure}

\begin{example}
Figure \ref{fig:spiralgraph2} shows a toric bipartite graph for spirals in $\pp_{2,5}$ (points with identical labels are identified). Here the map $\mathcal F \colon \mathcal B \to \mathcal S$ is birational in dimension $2$ despite the presence of degree $3$  vertices. The lines $q_i$ are computed from the point on the spectral curve as follows. First, one readily finds $q_1, q_2, q_3$ since they are located at vertices of degree $4$. Next, we can find $q_5$ as follows. Since the corresponding vertex is of degree $3$, there are two linear equations of the form $\langle \boldsymbol{q}_5, \dots \rangle = \dots$ on the lift $\boldsymbol{q}_5$ of ${q}_5$. Additionally, the vertex with label $P_2$ dictates that $\boldsymbol{q}_5$ is a linear combination of $\boldsymbol{q}_1$ and $\boldsymbol{q}_2$ which are already known. Together, we get three linear equations which uniquely determine $ {q}_5$ (the equations are independent for a generic choice of $P_1, \dots, P_6$). After that, we use the equation determined by the vertex with label $P_6$ to find $q_0$, and then finally the equation determined by the vertex with label $P_1$ to find $q_4$. 
\end{example}

Another way of finding the lines $q_0, q_4, q_5$ is by bringing the degrees of the corresponding vertices up to $4$ by using appropriate sequences of local moves. For example, perform an urban renewal at the face labeled $q_3P_5q_5P_6$ and assign points to newly created white vertices by using the urban renewal tule for circuit configurations. Then, contracting the white vertex labeled $P_6$, we get a graph where $q_0$ is at a degree $4$ vertex, which allows one to find it from the point on the spectral curve by using equations $\langle \boldsymbol{q}_0, \dots \rangle = \dots$ corresponding to edges incident to the vertex labeled $q_0$.

More generally, given a toric bipartite graph, it might be possible to invert the map $\mathcal F \colon \mathcal B \to \mathcal S$ in dimension $d$ by finding, for every white vertex, a sequence of moves which raises its degree to $d+2$. Experimental evidence suggests that for graphs that are \emph{minimal} in the sense of \cite{GK}, it is always possible to raise the degree of any given vertex to be equal to the number of vertices of the corresponding Newton polygon. 

\begin{conjecture}
Suppose we are given a minimal bipartite graph on a torus whose Newton polygon has $d+2$ vertices. Then the map $\mathcal F \colon \mathcal B \to \mathcal S$ is birational for generic white data in dimension $d$. 
\end{conjecture}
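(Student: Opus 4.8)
The plan is to reduce the general minimal case to the degree-$(d+2)$ computation of Example~\ref{prop:pentbir} by using local moves to raise vertex degrees, exploiting the fact that moves are compatible with every ingredient of the map $\mathcal F$. First I would record the relevant invariance: if $\Gamma$ and $\Gamma'$ are related by a move, then coherent double circuit configurations correspond to each other by Theorem~\ref{mainThm}, the induced birational identification $\mathcal B \dashrightarrow \mathcal B'$ intertwines $\mathcal F$ with $\mathcal F'$, and the spectral curve, regarded as a subvariety of the intrinsic torus $H^1(T^2,\C^*)$, is preserved. The last point is the crucial one: in the Goncharov--Kenyon formalism a move acts on the Kasteleyn weights $\kappa$ by a cluster mutation and alters $\det\mathcal K$ only by an invertible monomial factor \cite{GK}, so the zero locus is unchanged, and the induced transformation on the circuit-derived weights is exactly this move. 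Granting this, birationality of $\mathcal F$ is a move-invariant property, and moreover each individual hyperplane may be reconstructed in whichever move-equivalent graph is most convenient.

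The reconstruction I would then organize vertex by vertex. Fix a generic smooth point $\alpha\in\mathcal S$; smoothness forces $\ker\mathcal K\vert_{\C^W_\alpha}$ to be one-dimensional, so the function $f\vert_W$ of \eqref{eq:lvvw} is determined up to scale and, for generic white data, is nonvanishing. The remaining task is to pin down each covector $\boldsymbol{\ell}(v)$. For a black vertex $v$ of degree $d+2$ this is exactly the linear-algebra argument of Example~\ref{prop:pentbir}: the $d+2$ equations \eqref{eq:lvvw} at $v$ satisfy one relation, since $f\vert_W$ lies in the kernel and the vectors $\mathbf A(w)$ obey the circuit relation with the same coefficients $\kappa$, while the remaining $d+1$ equations are independent by the circuit condition, so $\boldsymbol{\ell}(v)$ is uniquely determined. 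For a vertex of smaller degree the system is underdetermined, and rather than confront the nonlinear circuit conditions directly I would perform a sequence of moves—affecting only the white data and the function $f$—that raises the degree of $v$ to $d+2$, solve there, and read off $\boldsymbol{\ell}(v)$; by the invariance of the first paragraph this is the same hyperplane as in $\Gamma$. Assembling the covectors obtained this way yields, over generic $\alpha$, a single coherent double circuit configuration (the circuit conditions are automatic at degree $d+2$ and coherence follows from \eqref{eq:lvvw}), so $\mathcal F$ is generically one-to-one, hence birational.

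The crux, and the step I expect to be the genuine obstacle, is the purely combinatorial claim that every vertex of a minimal graph whose Newton polygon has $d+2$ vertices can be raised to degree exactly $d+2$ by local moves. I would attack this through zig-zag paths. For a minimal graph the zig-zag paths are closed and non-self-intersecting, and their homology classes are in bijection with the sides of the Newton polygon; since a convex polygon has as many sides as vertices, the number of distinct zig-zag directions is $d+2$, which is precisely the maximal degree a circuit in $\P^d$ can support. I would then use that moves act transitively on minimal graphs with a fixed Newton polygon \cite{GK} to route one strand of each direction through the chosen vertex—repeatedly applying urban renewals that merge a neighboring strand into $v$ and clearing the degree-two vertices so created—thereby driving its degree up to $d+2$. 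Making this precise is where the real work lies: one must guarantee that every zig-zag direction can be realized at $v$ simultaneously, and that the moves remain valid for generic white data without premature degeneration of the circuit or coherence conditions. This is presumably why the statement is posed as a conjecture rather than a theorem.
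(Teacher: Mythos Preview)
The paper does not prove this statement: it is explicitly labeled a \emph{conjecture}, and the text preceding it offers only the heuristic ``experimental evidence suggests that for graphs that are minimal \dots\ it is always possible to raise the degree of any given vertex to be equal to the number of vertices of the corresponding Newton polygon.'' So there is no proof in the paper to compare your proposal against.

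That said, your outline is precisely the strategy the paper is gesturing at, and you have correctly isolated the obstruction. Your first two paragraphs package the move-invariance and the degree-$(d+2)$ linear algebra cleanly; the paper already contains the latter (Example~\ref{prop:pentbir}) and implicitly relies on the former. The honest gap is exactly what you flag in the last paragraph: the combinatorial assertion that, for a minimal graph with a $(d+2)$-gon as Newton polygon, any prescribed black vertex can be brought to degree $d+2$ by local moves. Your zig-zag heuristic (one strand per side, hence $d+2$ directions) is the natural picture, but turning ``route one strand of each direction through $v$'' into an actual sequence of moves is not something the paper knows how to do, and neither do you---which you say outright.

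Two smaller points worth tightening if you pursue this. First, the claim that the spectral curve is move-invariant when the Kasteleyn weights are the circuit-derived ones is not quite in \cite{GK}; it follows once one checks that the transformation of those weights under the moves of Figures~\ref{shrink2}--\ref{urban2} agrees with the Goncharov--Kenyon gauge/mutation rules, but this is a lemma to be stated and proved. Second, raising the degree of $v$ via urban renewals adjacent to $v$ does keep the hyperplane at $v$ fixed (it is one of the outer corners $c,d$ in Figure~\ref{urban3}), but to run the linear system \eqref{eq:lvvw} at $v$ in the new graph you need the values of $f$ at the freshly created white vertices $E,F$; you should say explicitly why the kernel vector $f'$ on the new graph restricts to $f$ on the surviving vertices and how it extends to the new ones. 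None of this is fatal, but as written these steps are asserted rather than argued.
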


\bibliographystyle{plain}
\bibliography{inc.bib}

\end{document}